\newcommand{\N}{\mathbb{N}}
\newcommand{\Z}{\mathbb{Z}}
\newcommand{\C}{\mathbb{C}}
\newtheorem{defn}{Definition}[section]
\newtheorem{thm}[defn]{Theorem}
\newtheorem{prop}[defn]{Proposition}
\newtheorem{cor}[defn]{Corollary}
\newtheorem{lemma}[defn]{Lemma}
\newcommand{\nr}[1]{\left\Vert #1\right\Vert}
\newcommand{\abs}[1]{\left\vert #1\right\vert}
\newcommand{\restr}[2]{\left.#1\right|_{#2}}
\newcommand{\eps}{\varepsilon}
\renewcommand{\phi}{\varphi}
\DeclareMathOperator{\Span}{span}
\DeclareMathOperator{\Tr}{Tr}
\title{Spectral triples for subshifts}
\author{Antoine Julien\\
Norwegian University of Science and Technology, \\ Trondheim, Norway\\ antoine.julien@math.ntnu.no, 
\and
Ian F. Putnam,\thanks{Supported in part by a grant from NSERC, Canada}\\
Department of Mathematics and Statistics,\\
University of Victoria,\\
Victoria, B.C., Canada V8W 3R4,\\
ifputnam@uvic.ca}
\begin{document}
\maketitle

\begin{abstract}
We propose a construction for spectral triple on algebras associated with subshifts.
One-dimensional subshifts provide concrete examples of $\Z$-actions on Cantor sets.
The $C^*$-algebra of this dynamical system is generated by functions in $C(X)$ and a
unitary element $u$ implementing the action.
Building on ideas of Christensen and Ivan, we give a construction of a family of spectral
triples on the commutative algebra $C(X)$.
There is a canonical choice of eigenvalues for the Dirac operator $D$ which ensures that $[D,u]$
is bounded, so that it extends to a spectral triple on the crossed product.

We study the summability of this spectral triple, and provide examples for which the Connes
distance associated with it on the commutative algebra is unbounded, and some
for which it is bounded.
We conjecture that our results on the Connes distance extend to the spectral triple defined
on the noncommutative algebra.
\end{abstract}

\newpage

\section{Introduction}

Alain Connes introduced the notion of a spectral triple
which consists of a separable, infinite dimensional
complex Hilbert space $\mathcal{H}$, a $*$-algebra 
of bounded linear operators on $\mathcal{H}$, $A$,
and a self-adjoint unbounded operator $D$ such that
$( 1 + D^{2})^{-1}$ is a compact operator.
The key relation is that for all 
$a$ in $A$, $[D, a] = D a - a D$ is densely defined
and extends to a bounded operator~\cite{Co89}.
The motivating example is where $A$ consists
of the smooth functions on a compact 
manifold and $D$ is some type of elliptic 
differential operator.

In spite of its geometric origins, there has been 
considerable interest in finding examples where
the algebra $A$ is the continuous functions 
on a compact, totally disconnected metric space with 
no isolated points. We refer to such a space as a Cantor set.
The first example was given by Connes~\cite{Co85, Co89, Con94}
but many other authors have also contributed:
Sharp~\cite{Sha12}, Pearson--Bellissard~\cite{PB09},
Christensen--Ivan~\cite{CI06}, etc.
Many of these results concern not just a Cantor set, 
but also some type of dynamical system on a Cantor set.
Many such systems are closely related to 
aperiodic tilings and are important as mathematical models
for quasicrystals.

We continue these investigations here by studying
subshifts.  Let $\mathcal{A}$ be a finite set 
(called the alphabet). Consider $\mathcal{A}^{\mathbb{Z}}$
with the product topology and the homeomorphism $\sigma$
which is the left shift: $\sigma(x)_{n} = x_{n+1}, 
n \in \mathbb{Z}$, for
all $x$ in  $\mathcal{A}^{\mathbb{Z}}$.
A subshift $X$ is any non-empty, closed and shift-invariant subset
 of  $\mathcal{A}^{\mathbb{Z}}$.
Is is regarded as a dynamical system for the map $\sigma|_{X}$.

Our aim here is to construct and study examples
of spectral triples for the algebra of locally 
constant functions on $X$, which we denote 
$C_{\infty}(X)$. Besides, we let $C(X)$ denote the $C^{*}$-algebra 
of continuous functions on $X$ and $C(X) \times \mathbb{Z}$ be the 
crossed product $C^*$-algebra generated by $C(X)$ and a canonical unitary $u$.
We will also consider the $*$-subalgebra of the crossed product 
generated by $C_{\infty}(X)$ and $u$ and study how our spectral triples extend to this algebra.

Our construction is just a special case of 
that given by Christensen and Ivan for 
AF-algebras \cite{CI06}. Given an increasing 
sequence of finite-dimensional 
$C^{*}$-algebras
\[
A_{1} \subseteq A_{2} \subseteq A_{3} \subseteq \cdots
\]
one considers their union and a representation
constructed by the GNS method. The same sequence
of sets provides an increasing sequence of finite-dimensional
subspaces for the associated Hilbert spaces which is used
the construct the eigenspaces for the operator $D$.
The one other piece of data needed for our construction
is a $\sigma$-invariant measure with support $X$, 
which we denote $\mu$.

Our case of the subshift has some special features. The 
first is that the sequence of finite-dimensional 
subalgebras we construct is canonical, arising
from the structure as a subshift. Secondly, the 
choice of eigenvalues for the operator $D$ is 
constrained by the dynamics (see Theorem 3.2)
in such a way that we find a canonical choice for $D$,
which we denote by $D_{X}$.

One similarity with the situation of Christensen and Ivan 
is that our operator $D_{X}$ is positive, meaning that,
at least to this point, it has no interesting index data,
but is rather regarded as a noncommutative metric structure
as in Rieffel's work~\cite{Rie99, Rie04}.

We mention one other result by Bellissard, Marcolli and 
Reihani~\cite{BMR10} which is relevant and 
motivated our work. They begin with a spectral triple
$( \mathcal{H}, A, D)$ and an action $\alpha$ 
of the integers on $A$ by automorphisms. Under the 
assumption that the action is quasi-isometric 
(that is,   there is a constant $C \geq 1$ such that
\[
C^{-1} \Vert [D, a] \Vert \leq \Vert 
[D, \alpha^{n}(a)] \Vert \leq C \Vert 
[D, a] \Vert,
\]
for all $a$ in $A$ and $n $ in $\mathbb{Z}$, they construct
a spectral triple for $A \times \mathbb{Z}$
satisfying certain conditions. Moreover, they show that 
every spectral triple on $A \times \mathbb{Z}$ satisfying 
these conditions arises this way. While this is 
a very good result, it is somewhat disappointing
since the hypothesis of quasi-isometric is very strong
and quite atypical in dynamical systems of interest.
In fact, subshifts are expansive. That is, if
$d$ is any metric on the shift space,
there is a constant $\epsilon_{0} > 0$ such that for
every $x \neq y$, there is an 
integer $n$ with $d(\sigma^{n}(x), \sigma^{n}(y)) \geq \epsilon_{0}$. In some sense, this is the opposite 
of quasi-isometric. Of course,  our spectral triples do not satisfy 
the conditions given in~\cite{BMR10}. The key point
is that the representation of $A \times \mathbb{Z}$ used
in~\cite{BMR10} is the left regular one on
$\mathcal{H} \otimes \ell^{2}(\mathbb{Z})$. Our representations 
are on the  Hilbert space $L^{2}(X, \mu)$, which 
may be regarded as more natural from a dynamical perspective.

We  also mention previous constructions of spectral triples for
subshifts or tiling spaces (see also the review article~\cite{JKS15}).
Kellendonk, Lenz and Savinien~\cite{KSav12, KLS13} built spectral
triples for the commutative algebra of one-dimensional subshifts.
Their construction is similar to ours in the sense that it is based
on the ``tree of words'' (which is related to our increasing family
of subalgebras).
In their papers, however, the algebra is represented on a Hilbert space
of the form $\ell^2(E)$, where $E$ is a countable set built from a discrete
approximation of the subshift. It is
unclear how the shift action reflects on this set, and there doesn't seem
to be an obvious extension to the noncommutative algebra.
They can still relate combinatorial properties of the subshift (namely
the property of being repulsive) to properties of the spectral triples.
Their construction also seems to be topologically quite rich, as the $K$-homology
class of their triple is in general not trivial.
In a recent paper, Savinien extended some results to higher-dimensional tilings~\cite{Sav15}.
Previously, Whittaker~\cite{Whi13} had proposed a spectral triple construction
for some hyperbolic dynamical systems (Smale spaces), a family which contains
self-similar tiling spaces.
In Whittaker's construction, the triple is defined on a noncommutative algebra
which corresponds to $C(X) \times \Z$ in our case. His representation
is however quite specific to the Smale space structure, and it is unclear
how our construction compares to his.

This paper is organized as follows. In the next section,
we discuss the basics of subshifts. This includes an 
introduction to three important classes which 
will be our main examples. The third section gives the construction of the spectral triples and 
some of their basic properties. In the fourth section,
we establish results on the summability of the 
spectral triples. Leaving the precise statements 
until later, various aspects of summability are closely
linked with the entropy of the subshift and also
its complexity. In the fifth section, we discuss 
the Connes metric. At this point all of our results
here deal with the algebra $C_{\infty}(X)$. We hope
to extend these to $C_{\infty}(X) \times \mathbb{Z}$
in future work. Again leaving the precise statements
until later, it turns out that the behaviour
of the Connes metric depends on very subtle
properties of the dynamics and varies quite wildly between
our three classes of examples.

\subsection*{Summary of the main results}

Given a subshift $X$ with an invariant measure $\mu$, we represent $C(X) \times \Z$ on $L^2(X,\mu)$. Let $C_\infty(X)$ be the algebra of locally constant functions on $X$.
For any increasing sequence going to infinity $(\alpha_n)_{n \geq 0}$, we define a Dirac operator with these eigenvalues, so that $(C_\infty(X),L^2(X,\mu),D)$ is a spectral triple (Theorem~\ref{thm:spectral-triple}). 
It extends to a spectral triple on $C_\infty(X) \times \Z$ if and only if $(\alpha_n - \alpha_{n-1})$ is bounded. This leads to the definition of a canonical operator $D_X$ (with $\alpha_n = n$), which is studied in the rest of the paper (Definition~\ref{def:DX}).

The summability depends on the complexity of the subshift. The complexity counts the number of factors of length $n$ appearing in the subshift. If the subshift has positive entropy $h(X)$ (which corresponds to the complexity function growing exponentially), the summability of $e^{sD_X}$ depends on whether $s < h(X)$ or $s > h(X)$ (Theorem~\ref{thm:theta-summable}). In the case of zero entropy, the complexity may grow asymptotically like $n^d$ for some number $d$. Whether $D_X$ is $s$-summable then depends whether $s < d$ or $s > d$ (Theorem~\ref{thm:finite-summable}).

In the last section, we investigate the Connes metric for various subshifts. In this section, we focus on the spectral triple restricted to the commutative algebra $C_\infty (X)$.
For (aperiodic, irreducible) shifts of finite type, the Connes metric is infinite (Theorem~\ref{thm:connes-SFT}).
For linearly recurrent subshifts, the Connes metric is finite and induces the weak-$*$ topology (Theorem~\ref{thm:LR}). This result is 
positive in the sense that it includes many examples 
of interest related to one-dimensional quasicrystals, including 
primitive substitution subshifts.
For Sturmian subshifts, 
the results depend rather subtly (but probably not surprisingly)
on the continued fraction
expansion of the irrational number which parameterizes the
subshift.
We show that there is a large class of
 numbers $\theta \in (0,1)$ such that the subshift with parameter $\theta$ has finite Connes metric and induces the 
 weak-$*$ topology (Theorem~\ref{thm:sturm-bounded-ae}). 
 In particular, this is the case 
 for almost all $\theta \in (0,1)$ (for the Lebesgue measure). However, we also exhibit Sturmian subshifts having
  infinite Connes metric (Theorem~\ref{thm:sturm-unbounded}).

\subsubsection*{Acknowledgments}
Substantial part of this work was done while A.~Julien was working at the University of Victoria, funded in part by the Pacific Institute for the Mathematical Sciences (PIMS). We also thank the referee for a
 thorough reading of the paper and many helpful 
 suggestions.

\section{Subshifts}
\label{sec:subshifts}

Let $\mathcal{A}$ be a finite set. We let 
$\sigma: \mathcal{A}^{\Z} \rightarrow  \mathcal{A}^{\Z} $ denote the left shift
map. That is, $\sigma(x)_{n} = x_{n+1}$, for $x$ in $\mathcal{A}^{\Z} $ and $n$ in $\Z$.
Let $X \subset \mathcal{A}^{\Z}$ be a subshift; that is, it is a closed subset with $\sigma(X) = X$.
Let $\mu$ be a $\sigma$-invariant probability measure on $X$ with support equal to $X$. The system $(X, \sigma)$ always admits 
invariant measures, but the hypothesis that it has full support
is non-trivial. It 
is satisfied in most cases of interest.

For $n \geq 1$, we let $X_{n}$ denote the words in $X$ of length $n$. That is, 
we have 
\[
 X_{n} = \{ (x_{1}, x_{2}, \ldots, x_{n}) \mid x \in X \}.
\]
We say that such finite words are \emph{admissible}.
For convenience, we let $X_{0}$ be the set consisting of the empty word, which we denote
$\varepsilon$.
For $w$  in $X_{n}$, we define the associated cylinder set
\begin{align*}
  U(w) & =  \{ x \in X \mid (x_{-m}, \ldots, x_{m}) = w \}, n = 2m+1, \\
  U(w) & =  \{ x \in X \mid (x_{1-m}, \ldots, x_{m}) = w \}, n = 2m.
  \end{align*}

We let $\xi(w)$  be the characteristic function  of $U(w)$. The set $U(w)$ is clopen and 
$\xi(w)$ lies in both $C(X)$ and $L^{2}(X, \mu)$. We let $\mu(w) = \mu(U(w))$, for 
convenience. The fact that we assume the support of $\mu$ is $X$ means that $\mu(w) \neq 0$, for 
any word $w$. 

We define $\pi : X_{n} \rightarrow X_{n-1}$, for $n > 1$, by 
\begin{align*}
  \pi(w_{-m}, \ldots, w_{m})   & =  (w_{1-m}, \ldots, w_{m})   & & \text{if } n = 2m+1, \\
  \pi(w_{1-m}, \ldots, w_{m})  & =  (w_{1-m}, \ldots, w_{m-1}) & & \text{if } n = 2m.
\end{align*}
We extend this to $n=1$ by defining $\pi(a)$ to be the empty word, for all $a$ in $X_{1}$.
We also define $U(\varepsilon)$ to be $X$.
Notice with this indexing, we have $U(\pi(w)) \supset U(w)$, for all $w$ in $X_{n}$, $n > 1$.

Of course, the function $\pi$ may be iterated and for all positive integers $k$, 
$\pi^{k} : X_{n} \rightarrow X_{n-k}$. This leads to the unfortunate notation
$\pi^{n-m} : X_{n} \rightarrow X_{m}$, when $n > m$. For convenience,
we let $\pi_{m}$ be the map defined on the union of all $X_{n}, n > m$, which
is $\pi^{n-m}$ on $X_{n}$. Thus $\pi_{m}: \cup_{n > m} X_{n} \rightarrow X_{m}$.
We also extend this function to be defined on $X$ by 
\begin{align*}
  \pi_{2m+1}(x) & = (x_{-m}, \ldots, x_{m}), &
  \pi_{2m}(x)   & = (x_{1-m}, \ldots, x_{m}).
\end{align*}

Observe that if $w$ is in $X_{n}$ then $\# \pi^{-1}\{ w \} > 1$ if and only if, for any
$w'$ with $\pi(w') = w$, we have $U(w) \supsetneq U(w')$. For 
 $n \geq 1$, we denote by $X^{+}_{n}$ the $w$ in $X_{n}$ satisfying either condition.
Such words are called (left-\ or right-) \emph{special words}.

Of basic importance in the dynamics of subshifts is the \emph{complexity} of the shift
$X$, which is simply $\# X_{n}, n \geq 1$, regarded as a function on the natural numbers.
Notice first that if $X = \mathcal{A}^{\Z}$ is the full shift, then the number of words
of length $n$ is simply $(\# \mathcal{A})^{n}$, for $n \geq 1$. More generally, if $X$ is a somewhat large subset
of $ \mathcal{A}^{\Z}$, then we reasonably expect the growth of the complexity to 
be exponential. To capture the base of the exponent, the \emph{entropy}  of 
$X$, denoted $h(X)$, is defined to be
\begin{equation}\label{eq:entropy}
 h(X) = \lim_{n \rightarrow \infty} \frac{\log ( \# X_{n})}{n}
     = \inf \left\{  \frac{\log ( \# X_{n})}{n} \ ; \ n \geq 1 \right\}.
\end{equation}
Of course, it is a non-trivial matter to see the limit exists, and equals the infimum.
This ``configuration entropy'' or ``patch-counting entropy'' agrees with the topological entropy for the dynamical system. See~\cite{LP03,BLR07}.
Secondly, there is some variation in the literature on dynamical systems as to the base with which the logarithm should be taken.
We will use $\log$ to mean natural logarithm throughout.

We also define $R(w)$ for any $w$ in $\cup_{n \geq 1} X_{n}$
\[
 R(w) = \sup \Bigl\{ \frac{\mu(w)}{\mu(w')} \mid \pi(w') = w \Bigr\}.
\]
Notice that $R(w) \geq 1$, with strict inequality if and only 
if $w$ is in $\cup_{n} X^{+}_{n}$.

\begin{lemma}
\label{lemma:Rw}
 For any $w$ in $X^{+}_{n}, n \geq 1$ and $w'$ with $\pi(w') = w$, we have  
 \[
  \left( \frac{\mu(w)}{\mu(w')} - 1 \right)^{-1} \leq R(w).
 \]
\end{lemma}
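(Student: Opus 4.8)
The plan is to reduce everything to the additivity of $\mu$ under the refinement map $\pi$. The cylinder sets $U(w'')$, taken over all $w''$ with $\pi(w'') = w$, partition $U(w)$: distinct extensions give disjoint cylinders, and their union is $U(w)$ (this is the content of the observation preceding the definition of $R(w)$, where $U(\pi(w'')) \supset U(w'')$). So I would first record the identity
\[
\mu(w) = \sum_{\pi(w'') = w} \mu(w'').
\]
The hypothesis that $w$ is special, $w \in X^{+}_{n}$, guarantees two things I will use: the sum has at least two terms, and each proper extension satisfies $\mu(w'') < \mu(w)$ strictly, so that the quantity $\mu(w)/\mu(w') - 1$ is a strictly positive (finite) number and its reciprocal is meaningful.

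Next I would rewrite the left-hand side in terms of the remaining extensions. Fixing the given $w'$, I have
\[
\frac{\mu(w)}{\mu(w')} - 1 = \frac{\mu(w) - \mu(w')}{\mu(w')} = \frac{\sum_{\pi(w'') = w,\; w'' \neq w'} \mu(w'')}{\mu(w')},
\]
and therefore, taking reciprocals,
\[
\left( \frac{\mu(w)}{\mu(w')} - 1 \right)^{-1} = \frac{\mu(w')}{\sum_{w'' \neq w'} \mu(w'')},
\]
where the sum runs over all $w''$ with $\pi(w'') = w$ other than $w'$ itself.

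Finally I would estimate numerator and denominator separately. The numerator obeys $\mu(w') \leq \mu(w)$ since $U(w') \subset U(w)$. For the denominator, because there are at least two extensions the sum over $w'' \neq w'$ is nonempty, and every term is at least $\min_{\pi(w'') = w} \mu(w'')$; hence the whole sum is bounded below by this minimum. Since there are only finitely many extensions (the alphabet is finite), the supremum defining $R(w)$ is attained, giving $R(w) = \mu(w)/\min_{\pi(w'')=w} \mu(w'')$. Combining these bounds yields
\[
\frac{\mu(w')}{\sum_{w'' \neq w'} \mu(w'')} \leq \frac{\mu(w)}{\min_{\pi(w'') = w} \mu(w'')} = R(w),
\]
which is exactly the assertion. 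The computation is elementary and I do not anticipate any analytic obstacle; the only point requiring care is that the denominator be bounded below by the \emph{global} minimum over all extensions (rather than compared term-by-term with $w'$), and that this sum be nonempty at all, both of which are precisely what specialness of $w$ supplies.
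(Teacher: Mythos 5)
Your proof is correct and is essentially the paper's own argument: the paper likewise writes $\left(\frac{\mu(w)}{\mu(w')}-1\right)^{-1}=\frac{\mu(w')}{\mu(w)-\mu(w')}$, uses the disjoint-union identity $\mu(w)=\sum_{\pi(w'')=w}\mu(w'')$ with at least two terms, and bounds $\mu(w')\leq\mu(w)$ above and $\mu(w)-\mu(w')$ below by the smallest extension's measure (which it exhibits by sorting the extensions, exactly your observation that $R(w)=\mu(w)/\min_{\pi(w'')=w}\mu(w'')$ is attained). No gaps; your cautionary remark about bounding the denominator by the global minimum is precisely the point the paper's sorted labeling handles.
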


\begin{proof}
Let $\pi^{-1}\{ w \} = \{ w_{1}, \ldots, w_{I} \}$, arranged so that \newline 
$ \mu(w_{1}) \leq \mu(w_{2}) \leq  \ldots \leq \mu(w_{I})$. Since $w$ is in $X^{+}_{n}$, 
we know that $I \geq 2$. Since $U(w)$ is the disjoint union of the sets $U(w_{i}), 1 \leq i \leq I$,
we have $\mu(w_{1}) + \mu(w_{2}) +  \ldots + \mu(w_{I}) = \mu(w)$.
It follows that for any $ 1 \leq i \leq I$, we have 
\[
 \left( \frac{\mu(w)}{\mu(w_{i})} - 1 \right)^{-1} = \frac{\mu(w_{i})}{\mu(w) - \mu(w_{i})} \leq 
   \frac{\mu(w)}{ \mu(w_{1})} = R(w).
\]
\end{proof}

\subsection{Shifts of finite type}
\label{ssec:sft}

Let $G = (G^{0}, G^{1}, i, t)$ be a finite directed graph. That is, $G^{0}, G^{1}$ are finite sets 
(representing the vertices and edges, respectively) and 
$i,t$ (meaning 'initial' and 'terminal') are maps from $G^{1}$ to $G^{0}$.

The set of edges in the graph is our symbols, $\mathcal{A} = G^{1}$, and $X^{G}$ consists of all 
doubly infinite paths in the graph, \emph{i.e.} 
\[
 X^{G} = \{ ( x_{n})_{n \in \Z} \mid x_{n} \in G^{1}, t(x_{n}) = i(x_{n+1}), \text{ for all } n \in \Z \}.
\]
This set $X^G$ is a subshift of the set $\bigl(G^1 \bigr)^\Z$ and is called a (nearest-neighbour)
\emph{subshift of finite type}, or simply \emph{shift of finite type}, or SFT.

Associated to the graph $G$ is an incidence matrix $A$. It is simplest to describe if we
set $G^{0} = \{ 1, 2, \ldots, I\}$, for some positive integer $I$. In tis case, $A$ is 
an $I \times I$ matrix and 
\[
 A_{i,j} = \# \{ e \in G^{1} \mid i(e)=i, t(e) = j \}, \  1 \leq i, j \leq I.
\]
For any $n \geq 1$, $1 \leq i, j \leq I$, $A^{n}_{i,j}$ equals the number of paths of length $n$
from $i $ to $j$. 

We say that $G$ is \emph{irreducible} if  for each $i,j$, there exists $n$ with $A^{n}_{i,j} > 0$.
We say that $G$ is aperiodic if $X^{G}$ is infinite. For an irreducible, aperiodic
shift of finite type, the matrix $A$ has a unique 
positive eigenvalue of maximum absolute value called the Perron eigenvalue, which we denote
by $\lambda_{G}$.
We denote $u_G$ and $v_G$ (or simply $u$ and $v$) respectively the left and right eigenvectors
associated with $\lambda_G$, normalised such that $\sum_k v_k = 1$ and $\sum_k u_k v_k = 1$. The subshift has a unique invariant measure of maximal entropy (see for example~\cite{CP74}) which we refer to as the Parry measure.
It is defined by
\begin{equation}\label{eq:SFT-measure}
 \mu (w_1, \ldots, w_n) = u_{i(w_1)} v_{t(w_n)} \lambda^{-(n-1)}.
\end{equation}

We remark that if $G$ is an irreducible graph, then the entropy of the associated 
 shift of finite type is $h(X^{G}) = \log( \lambda_{G})$.

\subsection{Linearly recurrent subshifts}
\label{ssec:LR}

Linear recurrence (or linear repetitivity) is a very strong regularity condition for a subshift.
The name refers to estimates on the size of the biggest gap between successive repetitions of patterns of a given size.
Equivalently, it refers to the biggest return times in small neighbourhoods of a given size in the dynamical 
system $(X,\mathbb Z)$.

\begin{defn}
If $w$ is a finite admissible word of a subshift $X$, a \emph{right return word} to $w$ of $X$ is any word $r$ such that: --~$rw$ is admissible; --~$w$ is a prefix of $rw$; --~there are exactly two occurrences (possibly overlapping) of $w$ in $rw$.
\end{defn}

\begin{defn}
A minimal subshift $X$ is \emph{linearly recurrent} if (or LR) there exists a constant $K$ such that for all $n \in \N$, any word $w \in X_{n}$ and any right return word $r$ to $w$, we have
\[
 \abs{r} \leq K \abs{w}.
\]
\end{defn}
It is a bound on the return time in the cylinders $U(w)$ for $w \in X_n$: for any $x \in U(w)$, there is $0 < k \leq K \abs{w}$ such that $\sigma^k(x) \in U(w)$.

Here are a few important consequences of linear recurrence.
\begin{prop}[Durand \cite{Dur00, Dur03}]
Let $X$ be a linearly recurrent subshift. Then $(X,\sigma)$ is uniquely ergodic.
\end{prop}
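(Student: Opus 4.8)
The plan is to deduce unique ergodicity from the uniform existence of word frequencies. Recall the classical criterion of Oxtoby: $(X,\sigma)$ has a unique $\sigma$-invariant Borel probability measure precisely when, for every $f \in C(X)$, the Birkhoff averages $\frac{1}{N}\sum_{k=0}^{N-1} f \circ \sigma^{k}$ converge uniformly on $X$ to a constant. Since the cylinder indicators $\xi(w)$, $w \in \cup_{n} X_{n}$, span a dense $*$-subalgebra of $C(X)$, it is enough to prove that for each admissible word $w$ the averages
\[
\frac{1}{N}\, \# \{\, 0 \le k < N : \sigma^{k}(x) \in U(w) \,\}
\]
converge, uniformly in $x \in X$, to a limit independent of $x$. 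I would therefore recast the whole problem combinatorially, as the uniform convergence, as $\abs{u} \to \infty$, of the density of occurrences of $w$ inside factors $u$ of points of $X$.

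The second step is to extract from linear recurrence the structural facts that drive everything. Fix $w \in X_{n}$. Linear recurrence forbids arbitrarily high powers: there is an integer $p = p(K)$ such that no factor of $X$ is a $p$-th power. Combined with the defining bound $\abs{r} \le K n$ for return words, this pins down their size: every right return word $r$ to $w$ satisfies $n / C \le \abs{r} \le K n$ for a constant $C = C(K)$, the lower bound because a return word shorter than $n/C$ would force $w$ to have a small period and hence be a high power. Moreover the number of distinct return words to $w$ is bounded by a constant $M = M(K)$. Coding each $x \in X$ as a bi-infinite concatenation of its return words to $w$ then yields a Kakutani--Rokhlin tower partition of $X$ with at most $M$ columns and with column heights comparable to $n$ up to the factor $CK$; in particular $w$ recurs syndetically with gaps in $[\,n/C, Kn\,]$.

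The third step is the renormalization. I would fix a nested sequence of words $w^{(0)} \subset w^{(1)} \subset \cdots$ with $\abs{w^{(i)}} \to \infty$ and with $\abs{w^{(i+1)}} \ge CK\,\abs{w^{(i)}}$, producing a refining sequence of the tower partitions above. The passage from level $i+1$ to level $i$ is recorded by a nonnegative integer transition matrix $T_{i}$ of bounded size (at most $M \times M$) whose $(r,r')$ entry counts the occurrences of the level-$i$ return word $r'$ inside the level-$(i+1)$ return word $r$. A $\sigma$-invariant probability measure is exactly a sequence of probability vectors on the columns compatible with all the $T_{i}$, so the set of invariant measures is the intersection over $i$ of the images $T_{0} T_{1} \cdots T_{i}(\Delta)$ of the simplex $\Delta$. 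The key point secured by linear recurrence is that each $T_{i}$ has all entries strictly positive with bounded spread: by the length bound, every level-$(i+1)$ return word has length at least $\abs{w^{(i+1)}}/C \ge K\,\abs{w^{(i)}}$, a window in which every level-$i$ return word must appear (by the syndetic recurrence of the previous step), and it appears a bounded number of times, so the ratio of largest to smallest entry of $T_{i}$ is bounded by a constant depending only on $K$.

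The final step, and the main obstacle, is to show that this nested intersection is a single point. This is exactly where the projective form of Perron--Frobenius theory applies: by Birkhoff's theorem a strictly positive matrix contracts the Hilbert projective metric on the positive cone by a factor $\tau < 1$ depending only on the ratio of its largest to smallest entry. Since linear recurrence bounds this ratio uniformly over all the $T_{i}$, a single $\tau < 1$ works at every level, and the infinite product $T_{0} T_{1} \cdots$ contracts $\Delta$ to a point, which is the unique compatible probability vector and hence the unique invariant measure. The geometric contraction rate $\tau^{i}$ moreover translates directly into the uniform convergence of word frequencies demanded by the criterion of the first step. The technical heart is thus establishing the uniform positivity and bounded spread of the matrices $T_{i}$ from the single constant $K$ (equivalently, the bounded-power and bounded-return-word properties); once these are in hand, the contraction argument closes the proof essentially mechanically.
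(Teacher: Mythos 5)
This proposition is quoted, not proved, in the paper: as the remark after Proposition~\ref{prop:LR-measure-clopen} explains, Durand's argument in \cite{Dur00} first establishes the frequency bounds \eqref{LR-frequencies} for \emph{every} invariant measure and then invokes Boshernitzan's criterion \cite{Bos92} to conclude unique ergodicity. Your route is genuinely different: instead of the Boshernitzan black box you build a self-contained renormalization scheme from return-word towers and conclude by Birkhoff's projective contraction for positive matrices with bounded spread. This is essentially the primitive $S$-adic / Bratteli--Vershik picture of linearly recurrent subshifts (also developed by Durand and coauthors), and it buys more than the citation route does: it is quantitative, giving geometric convergence of word frequencies, and it makes explicit the structural facts (boundedly many return words, uniformly primitive transition matrices of bounded size) that also underlie the estimates used in the proof of Theorem~\ref{thm:LR}. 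The price is length; given \eqref{LR-frequencies}, the Boshernitzan route is two lines.

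Two points in your sketch need repair, both fixable by adjusting constants. First, the positivity of $T_i$: syndetic recurrence of $w^{(i)}$ with gaps at most $K\abs{w^{(i)}}$ only guarantees that a window of length about $(K+1)\abs{w^{(i)}}$ contains \emph{one} occurrence of $w^{(i)}$, not every return word. To see a given return word $r$ you must capture an occurrence of the word $rw^{(i)}$, whose length can be up to $(K+1)\abs{w^{(i)}}$ and whose occurrences linear recurrence spaces at gaps up to $K(K+1)\abs{w^{(i)}}$; so the windows must have length of order $(K+1)^2\abs{w^{(i)}}$, and the ratio $\abs{w^{(i+1)}}/\abs{w^{(i)}}$ must be taken of order $K(K+1)^2$ rather than your $CK$. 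Second, and more seriously, you impose only a \emph{lower} bound on $\abs{w^{(i+1)}}/\abs{w^{(i)}}$, but the bounded spread of $T_i$ also requires an \emph{upper} bound: the minimal entry is $1$ while the maximal entry is of order $K^2 \abs{w^{(i+1)}}/\abs{w^{(i)}}$, so without an upper bound on the length ratio the Birkhoff contraction coefficients need not be uniformly less than $1$, and the nested intersection argument fails. Choosing $\abs{w^{(i)}}$ comparable to $\Lambda^i$ for a single constant $\Lambda = \Lambda(K)$ (e.g.\ nested prefixes $w^{(i)} = \pi_{n_i}(x)$ of a fixed point $x$ with $n_{i+1}/n_i$ pinned between two constants depending only on $K$) secures both bounds simultaneously, after which the uniform contraction and the identification of invariant measures with compatible vectors on the towers close the proof as you describe.
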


\begin{prop}[Durand \cite{Dur00, Dur03}]\label{prop:LR-measure-clopen}
Let $X$ be a linearly recurrent subshift with constant $K$, and $\mu$ be its unique invariant measure. Then for all $n \in \N$ and all $w \in X_n$,
\begin{equation}\label{LR-frequencies}
 \frac{1}{K} \leq n \mu(w) \leq K.
\end{equation}
\end{prop}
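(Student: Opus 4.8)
The plan is to translate both inequalities into statements about the first-return time to the clopen set $U(w)$, and then to apply Kac's lemma, using that linear recurrence supplies unique ergodicity (the preceding proposition). Fix $w \in X_n$ and write $A = U(w)$, so that $\mu$ is ergodic and the first-return map to $A$ is defined $\mu$-almost everywhere. Let $r_A : A \to \N$, $r_A(x) = \min\{ k > 0 : \sigma^{k}(x) \in A\}$, be the first-return time. By Kac's lemma, $\int_A r_A \, d\mu = 1$, since $\bigcup_{k \ge 0} \sigma^{-k}A$ has full measure by ergodicity. Consequently, if $m \le r_A \le M$ on $A$, then $m\,\mu(A) \le 1 \le M\,\mu(A)$, i.e. $1/M \le \mu(w) \le 1/m$. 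The whole statement therefore reduces to obtaining two-sided linear control of $r_A$ in terms of $n$.

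The upper estimate $r_A \le Kn$ is exactly the return-time bound recorded just after the definition of linear recurrence: for $x \in U(w)$ there is $0 < k \le K\abs{w}$ with $\sigma^{k}(x) \in U(w)$. Taking $M = Kn$ in the reduction above yields $\mu(w) \ge 1/(Kn)$, that is, $n\mu(w) \ge 1/K$.

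For the lower bound on $r_A$ — the harder half — I would show that $w$ cannot reoccur too quickly. The first-return time is the gap $p$ between two consecutive occurrences of $w$. If $p \ge n$ we may take $m = n$, so suppose $p < n$. Then the two occurrences overlap and $w$ is $p$-periodic, so every length-$p$ factor of $w$ is a cyclic shift of its length-$p$ prefix; hence at most $p$ distinct words of length $p$ occur inside $w$. Since $X$ is minimal and infinite, it is aperiodic, so by the Morse--Hedlund theorem $\# X_p \ge p+1$, and therefore some $y \in X_p$ does not occur in $w$. But linear recurrence applied to $y$ (its consecutive occurrences are at most $Kp$ apart) forces every factor of length at least $(K+1)p$ to contain $y$; as $w$ has length $n$ and contains no occurrence of $y$, we conclude $n < (K+1)p$, whence $p > n/(K+1)$. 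Taking $m = n/(K+1)$ gives $\mu(w) \le (K+1)/n$.

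Combining the two halves yields $1/(Kn) \le \mu(w) \le (K+1)/n$, which is the asserted estimate with constants controlled by $K$; a more careful count of the periodic stretch recovers the stated constant. The main obstacle is precisely this lower bound on the return time: the upper bound is immediate from the definition of linear recurrence, whereas ruling out fast recurrence is where aperiodicity (through Morse--Hedlund) is indispensable, and indeed the inequality $n\mu(w) \le K$ fails outright for a periodic orbit.
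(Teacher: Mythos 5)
The paper offers no internal proof here: the proposition is imported from Durand \cite{Dur00, Dur03}, so your argument is compared against Durand's route rather than anything in the text. Your proof is correct in substance and close in spirit to Durand's: both halves reduce to two-sided control of the gaps between occurrences of $w$, your upper gap bound $r_A \leq Kn$ is exactly the remark the paper makes after the definition of linear recurrence, and your Morse--Hedlund argument ruling out fast recurrence is in effect a proof of the first bullet of the next proposition in the paper ($\abs{r} > \abs{w}/K$ for any return word $r$ to $w$), also quoted from Durand. Three refinements. First, the constant: as written you only get $p > n/(K+1)$, hence $n\mu(w) \leq K+1$; your parenthetical fix is right, and concretely one applies the periodicity argument to the word of length $n+p$ spanning the two overlapping occurrences of $w$ (it is also $p$-periodic, hence omits some $y \in X_p$), giving $n+p < (K+1)p$, i.e.\ $p > n/K$ and $n\mu(w) \leq K$ exactly. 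Second, you do not need unique ergodicity: by minimality finitely many sets $\sigma^{-k}U(w)$ cover $X$, so $\int_A r_A\, d\mu = \mu\bigl(\bigcup_{k \geq 0} \sigma^{-k}A\bigr) = 1$ for \emph{every} invariant probability measure; this is worth doing, since the paper itself notes that Durand's logic runs the other way (the frequency estimate, valid for all invariant measures, is an ingredient in proving unique ergodicity via Boshernitzan), so invoking unique ergodicity makes your proof depend on the paper's ordering of statements rather than being self-contained. Third, your observation that aperiodicity is indispensable --- the upper bound fails outright on a periodic orbit, and Morse--Hedlund is precisely where infiniteness of $X$ enters --- is accurate and is implicit in the paper's standing assumption that $X$ is a Cantor set.
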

This presentation is actually somewhat backwards. In~\cite{Dur00}, Durand proves that equation~\eqref{LR-frequencies} holds for any invariant measure, and uses it in conjunction with a result of Boshernitzan~\cite{Bos92} to prove unique ergodicity.

\begin{prop}[Durand \cite{Dur00, Dur03}]
Let $(X,\sigma)$ be a linearly recurrent subshift with constant $K$. Then
\begin{itemize}
 \item Any right return word $r$ to $w \in X_n$ has length at least $\abs{r} > \abs{w}/K$;
 \item For any fixed $w \in X_n$, there are at most $K(K+1)^2$ return words to~$w$.
\end{itemize}
\end{prop}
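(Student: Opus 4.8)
The plan is to obtain both bounds from the balance estimate $\frac1K\le n\mu(w)\le K$ of Proposition~\ref{prop:LR-measure-clopen} together with the defining inequality $\abs{r}\le K\abs{w}$ for right return words. I would prove the second bullet first, since it is a clean counting argument. Fix $w\in X_n$ and let $r_1,\dots,r_s$ be the distinct right return words to $w$. For $x\in U(w)$ the first return time to $U(w)$ determines a unique return word, so $U(w)$ is partitioned (up to a null set, and in fact everywhere by minimality) into the sets of points whose first return spells out $r_i$; the set attached to $r_i$ is the cylinder on the admissible word $r_iw\in X_{\abs{r_i}+n}$. Hence $\sum_{i=1}^s\mu(r_iw)=\mu(w)$. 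Because $\abs{r_i}\le Kn$, each $r_iw$ has length at most $(K+1)n$, so Proposition~\ref{prop:LR-measure-clopen} gives $\mu(r_iw)\ge \frac{1}{K(K+1)n}$. Combining this with $\mu(w)\le \frac{K}{n}$ yields $\frac{s}{K(K+1)n}\le \frac{K}{n}$, that is $s\le K^2(K+1)\le K(K+1)^2$, which is the stated bound.

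For the lower bound on lengths I would start from a dichotomy on $m:=\abs{r}$. Reading $rw$ from the first occurrence of $w$, the two occurrences of $w$ sit at the extreme positions of $rw$. If $m\ge n$, comparing length-$n$ prefixes shows that $w$ is a prefix of $r$, so $\abs{r}=m\ge n>\abs{w}/K$ (using $K>1$, which holds for any infinite minimal subshift). If $m<n$ the two occurrences overlap, and matching $rw$ against the prefix condition forces $r$ to be the length-$m$ prefix of $w$ and $w$ to have period $m$. In particular $rw$ is itself a word of period $m$, so the occurrence of $w$ and its repetition $m$ steps later lie in a common maximal period-$m$ run $W\subseteq X$, whose length $\ell$ therefore satisfies $\ell\ge m+n$.

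The crux is to bound $\ell$ from above by a multiple of $m$, and here I would use a boundary marker. Let $a$ be the letter immediately to the left of the run $W$. Maximality of the run means the period-$m$ rule fails there, i.e.\ $a$ differs from the last letter of $r$ (the letter the period would predict). Consider the length-$(m+1)$ word $z=ar$. Any occurrence of $z$ begins with the letter $a$ followed by a fresh copy of $r$, and since $a$ is not the $m$-periodic predecessor of that copy, the occurrence is forced to be the left end of a maximal period-$m$ run; conversely it cannot occur strictly inside such a run, as that would again require $a$ to equal the last letter of $r$. Thus consecutive occurrences of $z$ are at least one full run apart, so the return word to $z$ associated with the left end of $W$ has length at least $\ell$. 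Linear recurrence applied to $z$ gives length at most $K\abs{z}=K(m+1)$, whence $m+n\le \ell\le K(m+1)$, which gives $m>\abs{w}/K$ up to an additive constant that I discuss below.

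The measure-counting for the number of return words is routine once the partition identity is in place, so the main obstacle is the length bound — specifically the combinatorial verification that the marker $z=ar$ detects exactly the left ends of maximal period-$m$ runs and never recurs internally, which is what converts the \emph{upper} return-word bound of linear recurrence into the desired \emph{lower} bound. A secondary point to watch is the additive slack in $m+n\le K(m+1)$: it yields $m>\abs{w}/K$ cleanly for all large $n$, and the finitely many short words must then be handled directly (or the constant in the marker length optimised) to obtain the strict inequality for every $n$.
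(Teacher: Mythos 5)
First, a point of reference: the paper offers no proof of this proposition---it is quoted from Durand \cite{Dur00, Dur03}---so your attempt has to be measured against the standard arguments rather than anything in the text. Your second bullet is correct and complete, and is essentially the classical counting: every $x\in U(w)$ returns (minimality), the set of points whose first return spells $r_i$ is exactly the cylinder of $r_iw$, so $\sum_{i=1}^s\mu(r_iw)=\mu(w)$; then $\abs{r_i}\le Kn$ gives $\abs{r_iw}\le (K+1)n$, Proposition~\ref{prop:LR-measure-clopen} gives $\mu(r_iw)\ge \bigl(K(K+1)n\bigr)^{-1}$ together with $\mu(w)\le K/n$, and $s\le K^2(K+1)\le K(K+1)^2$ follows. (You also correctly flag that aperiodicity and $K>1$ must be assumed; the first bullet is false for periodic subshifts.)

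The first bullet, however, has a genuine gap, and it sits exactly where you flagged uncertainty. Two issues compound. The cosmetic one: the marker must be $z=a\,r^*$ where $r^*=x_{[s,s+m)}$ is the first period of the run read from its left end $s$ (a cyclic rotation of $r$, since the run need not start at the occurrence of $rw$); maximality then says $a=x_{s-1}\ne x_{s+m-1}$, the last letter of $r^*$, and your internal-exclusion argument works verbatim. The decisive one: that argument only excludes occurrences of $z$ at positions $p$ with $p\ge s$ and $p+m<t$, where $[s,t)$ is the run. An occurrence with $p\in[t-m,t)$, overlapping the right end of the run, is \emph{not} excluded (for instance $p=t-1$, with the fresh copy of $r^*$ lying entirely outside the run, violates nothing). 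So the return word at the left end has length at least $\ell-m+1$, not $\ell$, and the correct chain is $m+n\le\ell\le K(m+1)+m-1$, i.e.\ $\abs{r}\ge(\abs{w}-K+1)/K$. This is strictly weaker than $\abs{r}>\abs{w}/K$ for \emph{every} $n$, not merely for finitely many small ones---your impression that the bound is clean ``for all large $n$'' rests on the overstated length-$\ell$ claim. Moreover the proposed fallback of handling short words ``directly'' is unavailable: the proposition quantifies over all linearly recurrent subshifts, so there is nothing concrete to enumerate.

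The additive loss is in fact the entire difficulty of the statement. The bound $\abs{r}>\abs{w}/K$ is equivalent to $X$ being $(K+1)$-power free: if $\abs{r}\le\abs{w}/K$ then $rw$ has period $m$ and length at least $(K+1)m$, so $r^{K+1}$ is admissible; conversely, if $u^{K+1}$ is admissible with $u$ primitive, then $u$ is a return word to $u^K$ of length exactly $\abs{u^K}/K$ (primitivity forces exactly two occurrences of $u^K$ in $u^{K+1}$). Feeding your weakened bound back through powers (using that $u$ is a return word to $u^{k-1}$) only yields $(k-1-K)\abs{u}<K$, which still permits exponents up to roughly $2K$ for short primitive $u$. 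So the boundary-marker method intrinsically loses an additive constant that exactly eats the strict inequality, and a different mechanism---Durand's power-freeness argument---is needed to close it.
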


Also proved in~\cite{Dur00}, is the fact that any linearly recurrent subshift has sub-linear complexity in the sense that $\# X_n \leq C n$ for some constant $C$.

Proposition~\ref{prop:LR-measure-clopen} has an immediate consequence.
\begin{lemma}\label{lem:R-LR}
 Let $X$ be a linearly recurrent subshift. Then there exists a constant $C > 1$ such that for all $w \in X_n^+$,
 \[
   C^{-1} \leq R(w) \leq C.
 \]
\end{lemma}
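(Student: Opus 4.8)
The plan is to read off both bounds directly from Proposition~\ref{prop:LR-measure-clopen}, which is exactly why the lemma is advertised as an ``immediate consequence.'' Write $K$ for the linear-recurrence constant, so that $\tfrac{1}{K} \leq m\,\mu(v) \leq K$ for every admissible word $v \in X_m$ and every $m \geq 1$; in particular, taking $m=1$ forces $K \geq 1$.

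First, the lower bound costs nothing. By the remark preceding the definition of $R$, we have $R(w) \geq 1$ for every admissible word, so as soon as $C > 1$ is chosen we automatically get $C^{-1} < 1 \leq R(w)$. The entire content of the lemma is therefore the \emph{upper} bound $R(w) \leq C$, the lower bound merely recording that the constant we produce exceeds $1$.

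For the upper bound, fix $w \in X_n^+$. Any $w'$ with $\pi(w') = w$ lies in $X_{n+1}$, since $\pi$ maps $X_{n+1}$ onto $X_n$. Applying the two-sided estimate of Proposition~\ref{prop:LR-measure-clopen} at levels $n$ and $n+1$ gives $\mu(w) \leq K/n$ and $\mu(w') \geq 1/\bigl(K(n+1)\bigr)$, so that
\[
 \frac{\mu(w)}{\mu(w')} \;\leq\; \frac{K/n}{1/\bigl(K(n+1)\bigr)} \;=\; K^2\,\frac{n+1}{n} \;\leq\; 2K^2,
\]
the last step using $(n+1)/n \leq 2$ for $n \geq 1$. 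This bound is independent of the choice of $w'$, so taking the supremum yields $R(w) \leq 2K^2$, and the estimate holds uniformly over all $n$ and all $w \in X_n^+$. Setting $C = 2K^2 > 1$ then delivers both inequalities at once.

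There is no genuine obstacle here: the only point worth flagging is why the constant is uniform in $n$ rather than level-dependent. The mechanism is that the $1/n$ decay of $\mu$ on words of length $n$, supplied by linear recurrence, cancels exactly between numerator and denominator up to the harmless factor $(n+1)/n$. It is precisely this quantitative control of $\mu(w)$ at every scale, and its stability under passing from level $n$ to level $n+1$, that Proposition~\ref{prop:LR-measure-clopen} provides; without it one could not preclude $R(w)$ from growing with $n$.
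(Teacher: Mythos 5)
Your proof is correct and is exactly the argument the paper intends: the paper states the lemma as an ``immediate consequence'' of Proposition~\ref{prop:LR-measure-clopen} without writing out details, and your computation --- applying $\tfrac{1}{K}\leq m\,\mu(v)\leq K$ at levels $n$ and $n+1$ to get $R(w)\leq K^2\tfrac{n+1}{n}\leq 2K^2$, with the lower bound trivial since $R(w)\geq 1$ --- is precisely the intended filling-in. Your observation that the lower bound $C^{-1}\leq R(w)$ carries no content is also accurate.
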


Note that the class of linearly recurrent subshifts contains the class of primitive substitution subshifts.
Many well-known examples, such as the Fibonacci or the Thue--Morse subshifts are given by primitive substitutions, hence are linearly recurrent, and are therefore covered by our results (see~\cite{Fog02}, for example for a survey on substitutions).

\subsection{Sturmian systems}
\label{ssec:sturmian}

There are many different equivalent ways to define a Sturmian sequence (see~\cite{Arn02}, for example).
We give here the definition as the coding of an irrational rotation of angle $\theta$ on a circle.
A crucial ingredient to identify properties satisfied by a Sturmian sequence is the continued fraction expansion of the number $\theta$. We remind the reader of a few of its properties.
The results on continuous fraction, which are stated here without proof, can be found in the classic book by Khinchin\footnote{Also transcribed ``Khinchine'' in the French-language publications.}~\cite{Khi97}.
To fix notations, $\lfloor \theta \rfloor$ (integer part of $\theta$) refers to the unique integer satisfying $\theta-1 < \lfloor \theta \rfloor \leq \theta$, and $\{\theta\} := \theta - \lfloor \theta \rfloor \in [0,1)$ refers to the fractional part of $\theta$.

 Let $\theta$ be a real number, which we assume irrational. Let $\theta_0 = \theta$ and $\theta_1 = \{\theta\}$, so that $\theta = a_0 + \theta_1$, with $a_0 \in \Z$ and $\{\theta \} \in [0,1)$.
 Then there are a unique $a_1 \in \N$ and $0 < \theta_2 < 1$ such that
 \[
  \theta = a_0 + \frac{1}{a_1+\theta_2},
 \]
 namely $a_1 = \lfloor 1/\theta_1 \rfloor$ and $\theta_2 = \{1/\theta_1\}$.
 
 By iteration, there is a sequence $a_2, a_3, \ldots$ of positive integers such that for all $n$,
 \[
  \theta = a_0 + \cfrac{1}{a_1 + 
                  \cfrac{1}{\ddots
                   \cfrac{1}{a_n+\theta_{n+1}}}}.
 \]
 The number $\theta$ is entirely determined by its sequence of \emph{partial quotients} $(a_n)_{n \geq 0}$, and we write $\theta = [a_0; a_1, a_2, \ldots]$.

 The finite continued fraction (truncating after the term $a_n$) 
 is called the $n$-th convergent and is denoted $[a_0; a_1, \ldots, a_n]$. It is a rational number, which has a unique representation as an irreducible fraction $p_n/q_n$.
 
 It is a standard result that $p_n$ and $q_n$ satisfy the recurrence relations
 \[
  \left\{ \begin{aligned}
   p_{n} & = a_n p_{n-1} + p_{n-2} \\
   q_{n} & = a_n q_{n-1} + q_{n-2}
  \end{aligned}\right.
  \quad
  \text{with the convention }
  \left\{ \begin{aligned}
     p_{-2} = 0, & \ \ p_{-1} = 1 \\
     q_{-2} = 1, & \ \ q_{-1} = 0 \, .
    \end{aligned}\right. 
 \]

\begin{lemma}\label{lem:cont-frac} 
 Let $0 < \theta < 1$ be irrational, so that $\theta = [0;a_1, a_2, \ldots]$. Let $(\theta_n)_{n\geq 0}$ be defined as above.
 We define $\lambda_n (\theta)$ (or just $\lambda_n$) by
 \(
  \lambda_n = \prod_{i=1}^n \theta_i
 \),
 for $n \geq 1$ (with the convention $\lambda_0 = 1$).
 Let $p_n/q_n$ be the $n$-th convergent.
 Then the following holds.
 \begin{enumerate}
  \item For all $n$,
   \(
    \theta = ({\theta_n^{-1} p_{n-1} + p_{n-2}})\big/({\theta_n^{-1} q_{n-1} + q_{n-2}}).
   \)
  \item For all $n$, $q_n p_{n-1} - p_n q_{n-1} = (-1)^n$.
  \item For all $n$, $\theta_n^{-1} q_{n-1} + q_{n-2} = \lambda_n^{-1}$.
  \item For all $n$, $\lambda_{n+1} =(-1)^n (\theta q_n - p_n)$.
 \end{enumerate}
\end{lemma}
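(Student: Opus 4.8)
The plan is to treat parts (1) and (2) as the classical theory of convergents and then to leverage them to obtain the two identities (3) and (4) involving the products $\lambda_n$. First I would establish (1). Writing $M_k = \begin{pmatrix} a_k & 1 \\ 1 & 0 \end{pmatrix}$, a straightforward induction on the recurrences $p_n = a_n p_{n-1} + p_{n-2}$ and $q_n = a_n q_{n-1} + q_{n-2}$ (with the stated initializations, and $a_0 = 0$ since $0 < \theta < 1$) shows that $\begin{pmatrix} p_n & p_{n-1} \\ q_n & q_{n-1}\end{pmatrix} = M_0 M_1 \cdots M_n$. Since the $n$-th complete quotient $\theta_n^{-1} = a_n + \theta_{n+1} = [a_n; a_{n+1}, \ldots]$ is the tail of the expansion, one has $\theta = [0; a_1, \ldots, a_{n-1}, \theta_n^{-1}]$, which corresponds to replacing the last factor by $\begin{pmatrix} \theta_n^{-1} & 1 \\ 1 & 0\end{pmatrix}$. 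Multiplying out $\begin{pmatrix} p_{n-1} & p_{n-2} \\ q_{n-1} & q_{n-2}\end{pmatrix}\begin{pmatrix}\theta_n^{-1} & 1\\ 1 & 0\end{pmatrix}$ and reading off the ratio of the first column gives (1). Taking determinants in the same matrix identity, with $\det M_k = -1$, gives $p_n q_{n-1} - p_{n-1} q_n = (-1)^{n+1}$, equivalently $q_n p_{n-1} - p_n q_{n-1} = (-1)^n$, which is (2).

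Next, for (3) I would set $D_n := \theta_n^{-1} q_{n-1} + q_{n-2}$, which is exactly the denominator produced in (1). Using $\theta_n^{-1} = a_n + \theta_{n+1}$ (valid for $n \geq 1$) together with the recurrence for $q_n$, I compute
\[
 D_n = (a_n + \theta_{n+1})q_{n-1} + q_{n-2} = q_n + \theta_{n+1}q_{n-1} = \theta_{n+1}\bigl(\theta_{n+1}^{-1}q_n + q_{n-1}\bigr) = \theta_{n+1}D_{n+1}.
\]
Hence $D_{n+1} = \theta_{n+1}^{-1}D_n$ for $n \geq 1$, and since the direct computation $D_1 = \theta_1^{-1}q_0 + q_{-1} = \theta_1^{-1} = \lambda_1^{-1}$ anchors the recurrence, iteration yields $D_n = (\prod_{i=1}^n \theta_i)^{-1} = \lambda_n^{-1}$, which is precisely (3).

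Finally, (4) follows by combining the three previous parts. Applying (1) at index $n+1$ gives $\theta = (\theta_{n+1}^{-1}p_n + p_{n-1})/D_{n+1}$, so that $\theta q_n - p_n = \bigl[(\theta_{n+1}^{-1}p_n + p_{n-1})q_n - p_n D_{n+1}\bigr]/D_{n+1}$. Substituting $D_{n+1} = \theta_{n+1}^{-1}q_n + q_{n-1}$ into the numerator, the two terms carrying $\theta_{n+1}^{-1}$ cancel, leaving $p_{n-1}q_n - p_n q_{n-1} = (-1)^n$ by (2). Therefore, using (3),
\[
 \theta q_n - p_n = \frac{(-1)^n}{D_{n+1}} = (-1)^n \lambda_{n+1},
\]
and multiplying by $(-1)^n$ gives $\lambda_{n+1} = (-1)^n(\theta q_n - p_n)$, which is (4).

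I do not expect a genuine obstacle here: parts (1) and (2) are entirely classical (cf. Khinchin), so the only new content is the denominator recurrence establishing (3), after which (4) is a one-line computation. The single point requiring care is the index bookkeeping — the initializations $p_{-2}, p_{-1}, q_{-2}, q_{-1}$, the fact that $a_0 = 0$ and $\theta_0 = \theta_1 = \theta$ because $\theta \in (0,1)$, and the restriction of the relation $\theta_n^{-1} = a_n + \theta_{n+1}$ to $n \geq 1$ — all of which must be tracked so that the recurrences and their base cases align correctly.
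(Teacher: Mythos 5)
Your proposal is correct and follows essentially the same route as the paper: part (3) via the one-step recurrence $\theta_{n+1}^{-1}q_n + q_{n-1} = \theta_{n+1}^{-1}(\theta_n^{-1}q_{n-1} + q_{n-2})$ anchored at $D_1 = \theta_1^{-1} = \lambda_1^{-1}$ (the paper phrases this as an induction), and part (4) via the same substitution of (1) at index $n+1$, cancellation of the $\theta_{n+1}^{-1}$ terms, and application of (2) and (3). Your matrix-product derivation of (1) and (2) merely fills in what the paper delegates to Khinchin, so the substantive content is identical.
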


\begin{proof}
 The first two points are classic (see for example Theorem~2 and the proof of Theorem~7 in~\cite{Khi97}). 
 The third point is proved by induction. For $n=1$, this is the equality $\theta_1^{-1} = \lambda_1^{-1}$.
 Assume it holds for fixed $n$.
Then
 \[ \begin{split}
  \theta_{n+1}^{-1} q_{n} + q_{n-1} & = \theta_{n+1}^{-1}\big( (a_n+ \theta_{n+1}) q_{n-1} + q_{n-2} \big) \\
     & = \theta_{n+1}^{-1}\big( \theta_n^{-1} q_{n-1} + q_{n-2} \big) \\
     & = \theta_{n+1}^{-1} \lambda_n^{-1} = \lambda_{n+1}^{-1}.
 \end{split}
 \]
 For the fourth point, we start apply points 1, 2 and 3 in succession:
 \[
  \begin{split}
   (-1)^n(\theta q_n - p_n) & = (-1)^n \frac{q_n(\theta_{n+1}^{-1} p_n + p_{n-1}) - p_n(\theta_{n+1}^{-1} q_n + q_{n-1})}{\theta_{n+1}^{-1} q_n + q_{n-1}} \\
    & = (-1)^n \frac{q_n p_{n-1} - p_n q_{n-1}}{\theta_{n+1}^{-1} q_n + q_{n-1}}
     = (-1)^{2n} \lambda_{n+1} = \lambda_{n+1}.
  \end{split}
 \]
\end{proof}

\begin{defn}
 Sturmian sequences of irrational parameter $0 < \theta < 1$ are defined as the sequences $w \in \{0,1\}^{\Z}$ such that
 \[
  w_n = \begin{cases}
   0 & \text{if } \{n \theta - x\} \in I, \\
   1 & \text{otherwise},
  \end{cases}
 \]
 where $x$ is a real number, $\{y\} = y- \lfloor y \rfloor$ is the fractional part of the real number $y$, and $I$ is either $[0,1-\theta)$ or $(0,1-\theta]$

 Let $X^\theta$ be the set of all Sturmian sequences of parameter $\theta$.
\end{defn}

It is well known that $X^\theta$ is a closed shift-invariant subspace of the full shift.
By irrationality of $\theta$, it is aperiodic and minimal.

\begin{prop}
 If $X^\theta$ is a Sturmian subshift with irrational $\theta$, then $\# X^\theta_n = n+1$. It implies that $\# (X_n^\theta)^+ = 1$ for all~$n$.
\end{prop}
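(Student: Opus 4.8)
The plan is to prove the complexity formula geometrically, using the description of $X^\theta$ as the coding of a rotation, and then to read off the number of special words purely combinatorially from the complexity increment.

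First I would reinterpret a length-$n$ admissible word as an itinerary. Write $R_\theta$ for the rotation $t \mapsto t + \theta$ on the circle $\R/\Z$ and $\mathcal P = \{I, \, (\R/\Z)\setminus I\}$ for the defining two-interval partition. By the coding rule, the value $w_k$ is determined by whether $\{k\theta - x\}$ lies in $I$, hence the word $(w_0,\ldots,w_{n-1})$, as a function of $x$, is constant on the atoms of the refined partition $\mathcal P_n = \bigvee_{k=0}^{n-1} R_\theta^{-k}\mathcal P$. Since $\partial I = \{0,\, 1-\theta\}$, the coordinate $w_k$ flips exactly when $x$ crosses one of the two points $\{k\theta\}$ or $\{(k+1)\theta\}$; running over $0 \le k \le n-1$, the cut points of $\mathcal P_n$ are therefore the set $\{\,\{j\theta\} : 0 \le j \le n\,\}$. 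Because $\theta$ is irrational these $n+1$ points are pairwise distinct, and $n+1$ distinct points cut the circle into exactly $n+1$ arcs. Since every word of length $n$ occurring in $X^\theta$ corresponds to one of these arcs, this yields the upper bound $\#X_n^\theta \le n+1$.

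For the matching lower bound I would invoke the aperiodicity of $X^\theta$, already recorded in the excerpt as a consequence of the irrationality of $\theta$, together with the Morse--Hedlund theorem: an aperiodic subshift satisfies $\#X_n \ge n+1$ for every $n$. As $X^\theta$ is minimal, its factor complexity coincides with that of any of its sequences, so the theorem applies directly and, combined with the upper bound, gives $\#X_n^\theta = n+1$.

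The second statement I would then deduce by a counting argument for the map $\pi : X_{n+1}^\theta \to X_n^\theta$. This map is surjective for any subshift, since a word $w \in X_n^\theta$ is realised as a window of some $x \in X^\theta$ and a one-longer window gives a preimage; hence $\#X_{n+1}^\theta = \sum_{w \in X_n^\theta} \#\pi^{-1}\{w\}$. Subtracting $\#X_n^\theta$ and using that every non-special word has a single preimage gives
\[
 1 = \#X_{n+1}^\theta - \#X_n^\theta = \sum_{w \in X_n^\theta} \bigl( \#\pi^{-1}\{w\} - 1 \bigr) = \sum_{w \in (X_n^\theta)^+} \bigl( \#\pi^{-1}\{w\} - 1 \bigr).
\]
Each summand on the right is a positive integer, so the total can equal $1$ only if there is exactly one special word (carrying a fibre of size exactly two). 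Therefore $\#(X_n^\theta)^+ = 1$.

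The main obstacle is the lower bound $\#X_n^\theta \ge n+1$: the partition argument only bounds the number of words above by the number of arcs, and one must still rule out distinct arcs producing the same word. I expect aperiodicity (via Morse--Hedlund) to settle this cleanly, after which the computation of $\#(X_n^\theta)^+$ is just bookkeeping with the fibres of $\pi$.
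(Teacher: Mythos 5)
Your proof is correct. Note first that the paper offers no proof of this proposition at all: it is stated as a classical fact about Sturmian subshifts (complexity $n+1$ is one of the standard equivalent definitions of Sturmian), so there is no internal argument to compare against; your write-up is the standard one, and it is sound. The upper bound via the cut points $\{j\theta\}$, $0 \le j \le n$, of the refined partition is right (the boundary $\{0,1-\theta\}$ of $I$ pulls back under $w_k$, which depends on $\{k\theta - x\}$, exactly to $\{k\theta\}$ and $\{(k+1)\theta\}$, and irrationality makes the $n+1$ points distinct), and the lower bound via Morse--Hedlund plus minimality correctly disposes of the only real obstacle, namely that distinct arcs could a priori code the same word. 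Two small points are worth making explicit to mesh with the paper's conventions. First, the paper indexes $X_n$ by centered windows $(x_{-m},\ldots,x_m)$ or $(x_{1-m},\ldots,x_m)$, not by $(w_0,\ldots,w_{n-1})$; shift invariance of $X^\theta$ makes the factor set independent of window placement, and a shifted window corresponds to the same coding with translated parameter $x - m\theta$, so your identification of words with arcs is unaffected. Second, the paper's map $\pi : X_{n+1} \to X_n$ alternately deletes the leftmost or rightmost letter depending on parity, so the set $X_n^+$ consists of left-special words for one parity of $n$ and right-special words for the other; your fiber-counting identity
\[
 \# X_{n+1}^\theta - \# X_n^\theta \;=\; \sum_{w \in (X_n^\theta)^+} \bigl( \#\pi^{-1}\{w\} - 1 \bigr) \;=\; 1
\]
uses only the surjectivity of $\pi$ and the complexity increment, hence applies verbatim for both parities, which is exactly why the single statement $\#(X_n^\theta)^+ = 1$ covers both the left- and right-special counts as the paper intends.
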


It is well known that the subshifts $X^\theta$ are uniquely ergodic, and the frequency of the letter $1$ is equal to the length of the interval $I$, \emph{i.e.}\ $\theta$ (the frequency of the letter $1$ is defined as the measure of the cylinder set associated with the word $1 \in X^\theta_1$ for the unique invariant measure).

The measure of the cylinder sets for a Sturmian subshift is very well controlled in terms of the partial quotients.
This fact can be seen as a consequence of the three-distances theorem.
We use the three distance theorem the way it is presented in~\cite{AB98} (noting that our $\lambda_{n+1}$ is their $\eta_n$, by point~4 of Lemma~\ref{lem:cont-frac}). The relationship between this theorem and the frequencies of Sturmian sequences is established in~\cite{Ber96}.
\begin{thm}[\cite{AB98,Ber96}]\label{thm:freq-sturm}
 Let $X^\theta$ be a Sturmian subshift of parameter $0 < \theta < 1$ irrational.
 If $kq_n + q_{n-1} \leq m < (k+1)q_n +q_{n-1}$ for some $n \geq 1$ and $0 < k \leq a_{n+1}$, then the frequencies $\mu(w)$ ($w \in X^\theta_m$) are in the set:
 \[
  \bigg\{  \big(\lambda_n - k \lambda_{n+1} \big),\  \lambda_{n+1} ,\  \big( \lambda_n - (k-1) \lambda_{n+1} \big) \bigg\}.
 \]
 If $m= (k+1) q_n + q_{n-1} -1 $, the third frequency does  not occur.
\end{thm}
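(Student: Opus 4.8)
The plan is to translate the statement about word frequencies into a statement about the gap lengths of a rotation orbit, and then to invoke the three-distance theorem in the explicit form recorded in~\cite{AB98}. The actual content is the dictionary between Sturmian frequencies and rotation gaps, together with the notational translation $\eta_n = \lambda_{n+1}$; the three-distance theorem itself is quoted.

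First I would use the description of $X^\theta$ as the coding of the rotation $R_\theta\colon t \mapsto t + \theta$ of $\R/\Z$ by $\{I, I^c\}$, with $I = [0, 1-\theta)$. Since $X^\theta$ is uniquely ergodic and its invariant measure is the pushforward of Lebesgue measure, the frequency $\mu(w)$ of $w \in X^\theta_m$ equals the Lebesgue measure of the set of base points $t$ producing $w$, that is, of the corresponding atom of $\bigvee_{j=0}^{m-1} R_\theta^{-j}\{I, I^c\}$. Computing the preimages under $R_\theta^{-j}$, $0 \le j \le m-1$, of the two endpoints $0$ and $1-\theta$ of $I$, I would show that these atoms are exactly the arcs cut out by the $m+1$ points $\{\, j\theta \bmod 1 : 0 \le j \le m \,\}$ (the count $m+1$ being consistent with $\# X^\theta_m = m+1$). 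Thus the multiset $\{\mu(w) : w \in X^\theta_m\}$ is precisely the multiset of gap lengths of these $m+1$ points.

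Next I would apply the three-distance theorem to those $m+1$ points in the form of~\cite{AB98}. Using $kq_n + q_{n-1} \le m < (k+1)q_n + q_{n-1}$ (with $1 \le k \le a_{n+1}$) to locate $m$, the theorem produces at most three gap lengths, the largest being the sum of the other two. Translating their $\eta_n$ via point~4 of Lemma~\ref{lem:cont-frac}, namely $\eta_n = \lambda_{n+1} = \abs{\theta q_n - p_n}$, the two smaller gaps become $\lambda_{n+1}$ and $\lambda_n - k\lambda_{n+1}$, and the largest becomes their sum $\lambda_n - (k-1)\lambda_{n+1}$, which is exactly the stated set; the identity $\lambda_n - (k-1)\lambda_{n+1} = (\lambda_n - k\lambda_{n+1}) + \lambda_{n+1}$ is an immediate check. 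For the last claim I would examine the endpoint $m = (k+1)q_n + q_{n-1} - 1$, where $m+1 = (k+1)q_n + q_{n-1}$ is a ``complete'' configuration with no remainder term; there the three-distance theorem degenerates to two lengths, and I would verify that it is the longest gap $\lambda_n - (k-1)\lambda_{n+1}$ whose multiplicity drops to zero.

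The main obstacle I anticipate is purely the index bookkeeping in the middle step: matching the exact ranges, multiplicities, and endpoint conventions of~\cite{AB98} (which counts points rather than word lengths, producing an off-by-one between $m$ and $m+1$), and checking that the case $k = a_{n+1}$ glues correctly to level $n+1$ via the recurrence $\lambda_n = a_{n+1}\lambda_{n+1} + \lambda_{n+2}$ coming from the continued-fraction relations. To fix these conventions before writing the general argument, I would first run a single small numerical example (e.g.\ $\theta = \sqrt{2}-1$, for which every $\theta_i = \sqrt{2}-1$ and hence $\lambda_n = (\sqrt{2}-1)^n$), which already exhibits both the generic three-value case and the degenerate two-value case.
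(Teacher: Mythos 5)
Your proposal is correct and follows essentially the same route the paper itself relies on: the paper gives no proof but cites \cite{Ber96} for exactly your dictionary between frequencies of length-$m$ words and the gap lengths of the $m+1$ orbit points of the rotation, and \cite{AB98} for the three-distance theorem, with the same translation $\eta_n = \lambda_{n+1}$ via point~4 of Lemma~\ref{lem:cont-frac}. Your bookkeeping (the off-by-one between $m$ and $m+1$ points, the largest gap being the sum of the other two, and its multiplicity dropping to zero at the complete configuration $m = (k+1)q_n + q_{n-1} - 1$) matches the quoted statement, so there is nothing to correct.
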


Remark in particular that if $m= q_n + q_{n-1} -1 = (a_n+1) q_{n-1} + q_{n-2} -1$, then the $\mu(w)$ is either $\lambda_{n}$ or $\lambda_{n-1} - a_n \lambda_n = \lambda_{n+1}$ for $w \in X_m^\theta$.

\section{Spectral triples}
\label{sec:ST}

From this point on, we assume that a subshift $(X,\sigma)$ is given. In time, we will assume that it is a shift of finite type, a linearly recurrent subshift, or a Sturmian subshift.

We consider the crossed product $C^{*}$-algebra $C(X) \times \Z$, which is generated by $f \in C(X)$ and a unitary $u$
satisfying $u f = f \circ \sigma^{-1} u$, for all $f$ in $C(X)$. This is represented on $L^{2}(X, \mu)$ by 
\begin{align*}
 (f \xi) (x) & = f(x) \xi(x), &
 (u \xi) (x) & = \xi(\sigma^{-1}(x)),
\end{align*}
for $\xi$ in $L^{2}(X, \mu)$, $f$ in $C(X)$ and $x$ in $X$.

Let $C_{0} = \C 1 $ be the constant functions and, 
recalling that $\xi(w)$ denotes the characteristic function 
of the set $U(w)$, $w \in X_{n}, n \geq 1$, let
 $C_{n} = \Span \{ \xi(w) \mid w \in X_{n} \}$, for
$ n \geq 1$. Notice that $C_{0} \subset C_{1} \subset C_{2} \cdots$, each $C_{n}$ is a finite-dimensional
subspace of $L^{2}(X, \mu)$ and a finite-dimensional $*$-subalgebra of $C(X)$. Moreover, the union of the 
$C_{n}$ is dense in both, with the appropriate norms. For convenience, let $C_{-1} = \{ 0 \}$.
Let $P_{n}$ denote the orthogonal projection of $L^{2}(X, \mu)$  on $C_{n}$, for each non-negative $n$.
We let $C_{\infty}(X) = \cup_{n = 1}^{\infty} C_{n}$. We will denote by 
$C_{\infty}(X) \times \Z$ the $*$-algebra of operators on $L^{2}(X, \mu)$ generated by $C_{\infty}(X)$ and $u$.

As we indicated above, our main interest is in the crossed product $C(X) \times \Z$. We have here a specific 
representation of this $C^{*}$-algebra, but it is a reasonable question to ask if it is faithful.
First, the representation of $C(X)$ on $L^{2}(X, \mu)$ is faithful since we assume our measure $\mu$ has support equal to $X$.
Secondly, we use the fact  that, if our system is topologically free, that is, for every $n$ in $\Z$, the set
$\{ x \in X \mid \sigma^{n}(x) = x \}$ has empty interior in $X$, then every non-trivial, closed, two-sided ideal
in $C(X) \times \Z$ has a non-trivial intersection with $C(X)$. (See \cite{EH67} for a precise statement.)
In our case, since the representation of $C(X)$ is faithful, so is that of $C(X) \times \Z$, provided our
system is topologically free. We make this assumption implicitly from now on and remark that it holds
in all of our examples.

We define an operator $D$ as follows. First, we choose a strictly increasing sequence of
positive real numbers, $\alpha_{n}, n \geq 0$, and define
the domain of our operator 
\[
 \mathcal{D} = \Bigl\{ \sum_{n \geq 0} \xi_{n} \mid \xi_{n} \in C_{n} \cap C_{n-1}^{\perp}, \ \sum_{n} (\alpha_{n} \Vert \xi_{n} \Vert)^{2} < \infty \Bigr\}.
\]
We define
\[
 D \xi = \alpha_{n} \xi, \ \xi \in C_{n} \cap C_{n-1}^{\perp}.
\]

\begin{lemma}
\label{lemma:Cn}
Let $n \geq 0$. For all $f$ in $C_{n}$, we have 
\begin{enumerate}
 \item $f C_{m} \subset C_{m}$, for all $m \geq n$,
 \item $f \mathcal{D} \subset \mathcal{D}$, 
\item $[D, f] | C_{n}^{\perp} = 0$.
\end{enumerate}
\end{lemma}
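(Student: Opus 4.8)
The plan is to reduce all three parts to one structural fact: the projection $P_{m}$ is the conditional expectation onto the finite partition of $X$ into the cylinders $U(w)$, $w \in X_{m}$, and it enjoys the module (pull-out) property $P_{m}(f g) = f\, P_{m}(g)$ whenever $f \in C_{n}$ and $m \geq n$. To establish this, I would note that the sets $U(w)$, $w \in X_{m}$, form a clopen partition of $X$, so $\{ \mu(w)^{-1/2}\xi(w) \}$ is an orthonormal basis of $C_{m}$ and
\[
 P_{m} g = \sum_{w \in X_{m}} \frac{1}{\mu(w)} \Bigl( \int_{U(w)} g\, d\mu \Bigr)\, \xi(w) .
\]
Since $m \geq n$, the partition by $X_{m}$-cylinders refines the one by $X_{n}$-cylinders (iterating $U(w') \subseteq U(\pi(w'))$), so every $f \in C_{n}$ is constant on each $U(w')$, $w' \in X_{m}$; pulling that constant out of the integral over $U(w')$ yields the module property at once. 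In particular $f \in C_{n} \subseteq C_{m}$, which already gives part~(1): $C_{m}$ is an algebra and $f \in C_{m}$, hence $f C_{m} \subseteq C_{m}$.

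Next I would introduce the graded projections $Q_{m} = P_{m} - P_{m-1}$ onto $C_{m} \cap C_{m-1}^{\perp}$, so that $D = \sum_{m} \alpha_{m} Q_{m}$. The key consequence of the module property is that for $m > n$ (so that both $m, m-1 \geq n$) one has $Q_{m}(f g) = f\, Q_{m}(g)$; taking $g = \xi_{m} \in C_{m} \cap C_{m-1}^{\perp}$ gives $Q_{m}(f\xi_{m}) = f\xi_{m}$, i.e.\ multiplication by $f$ maps each graded subspace $C_{m} \cap C_{m-1}^{\perp}$ into itself for every $m > n$. This is the whole engine: on $C_{n}^{\perp} = \overline{\bigoplus_{m > n}(C_{m} \cap C_{m-1}^{\perp})}$, multiplication by $f$ respects the grading, hence commutes with $D$. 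Concretely, for $\xi_{m} \in C_{m} \cap C_{m-1}^{\perp}$ with $m > n$ we get $D f \xi_{m} = \alpha_{m} f\xi_{m} = f D\xi_{m}$, so $[D,f]\xi_{m} = 0$; extending by linearity and continuity over the dense set of such $\xi_{m}$ proves part~(3).

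Part~(2) then follows by bookkeeping on the grading. Writing $\xi = \sum_{m \geq 0} \xi_{m} \in \mathcal{D}$, I would split $f\xi = \sum_{m \leq n} f\xi_{m} + \sum_{m > n} f\xi_{m}$. The first sum lies in $C_{n}$ (an algebra containing $f$), a fixed finite-dimensional space, so it contributes only finitely many graded components, each of finite norm. For $m > n$ the term $f\xi_{m}$ is, by the step above, exactly the $m$-th graded component of $f\xi$, and $\nr{f\xi_{m}} \leq \nr{f}_{\infty} \nr{\xi_{m}}$ since multiplication by $f$ has operator norm $\nr{f}_{\infty}$ on $L^{2}(X,\mu)$. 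Hence
\[
 \sum_{m > n} (\alpha_{m} \nr{f\xi_{m}})^{2} \leq \nr{f}_{\infty}^{2} \sum_{m > n}(\alpha_{m} \nr{\xi_{m}})^{2} < \infty ,
\]
and adding the finitely many low-order terms shows that the defining series has finite weighted norm, i.e.\ $f\xi \in \mathcal{D}$.

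I expect the only genuine subtlety to be the clean identification of the homogeneous components of $f\xi$: one must verify that for $m > n$ the vector $f\xi_{m}$ really is the $m$-th graded piece and that the low-order terms $\sum_{m \leq n} f\xi_{m}$ do not leak into high degrees. Both are forced by the module property, which gives $f(C_{m} \cap C_{m-1}^{\perp}) \subseteq C_{m} \cap C_{m-1}^{\perp}$ for $m > n$ together with $f C_{n} \subseteq C_{n}$. Once this is in hand, the summability estimate and the commutator computation are routine, and all three statements fall out of the single identity $P_{m}(fg) = f P_{m}(g)$ for $m \geq n$.
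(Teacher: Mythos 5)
Your proof is correct and takes essentially the same route as the paper: both arguments reduce to showing that multiplication by $f \in C_{n}$ preserves each graded subspace $C_{m} \cap C_{m-1}^{\perp}$ for $m > n$, on which $D$ acts as the scalar $\alpha_{m}$, and then conclude all three parts from this invariance. The only cosmetic difference is that you establish the invariance via the explicit conditional-expectation identity $P_{m}(fg) = f\,P_{m}(g)$, whereas the paper gets it more briefly from the observation that $C_{m}$ and $C_{m-1}$ are invariant under the $*$-algebra $C_{n}$, hence so are $C_{m-1}^{\perp}$ and $C_{m} \cap C_{m-1}^{\perp}$; your summability estimate in part~(2) merely spells out what the paper leaves implicit.
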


\begin{proof}
 For the first item, we know that $C_{n} \subset C_{m}$ whenever $m \geq n$ and that $C_{m}$ is a unital subalgebra so that
 $f C_{m} \in C_{n} C_{m} \subset C_{m} C_{m} = C_{m}$. For the second, suppose that 
 $\xi$ is in $C_{m} \cap C_{m-1}^{\perp}$, for some $m > n$.  $C_{m}$ and $C_{m-1}$
 are invariant under $C_{n}$. Hence so is $C_{m-1}^{\perp}$ and $C_{m} \cap C_{m-1}^{\perp}$.
 The second part follows from this.
 On the space $C_{m} \cap C_{m-1}^{\perp}$, $D$ is a scalar and hence commutes with $f$. 
 Taking direct sums over all $m > n$ yields the result. 
\end{proof}

\begin{thm}
\label{thm:spectral-triple}
Let $X$ be a subshift.
\begin{enumerate}
 \item  $(C_{\infty}(X) , L^{2}(X, \mu), D)$ is a spectral triple if and only if the sequence $\alpha_{n}, n \geq 0$ tends to 
 to infinity.
 \item $[D, u]$ extends to a bounded linear operator on $L^{2}(X, \mu)$ if  and only if $\alpha_{n} - \alpha_{n-1}, n \geq 1$ is bounded.
 \item  $(C_{\infty}(X) \times \Z , L^{2}(X, \mu), D)$ is a spectral triple if and only if the sequence $\alpha_{n}, n \geq 0$ tends to 
 to infinity and $\alpha_{n} - \alpha_{n-1}, n \geq 1$ is bounded.
\end{enumerate}
\end{thm}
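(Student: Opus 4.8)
The plan is to prove the three statements in a way that reflects their logical dependence: part (3) is simply the conjunction of parts (1) and (2), since $(C_\infty(X) \times \Z, L^2(X,\mu), D)$ is a spectral triple exactly when the underlying data give a spectral triple on $C_\infty(X)$ \emph{and} the extra generator $u$ satisfies the bounded-commutator condition. So the real work is in (1) and (2), and I would organize the argument around those.

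For part (1), I would first verify the domain and self-adjointness issues: the operator $D$ is defined as a diagonal (scalar $\alpha_n$) operator on the mutually orthogonal blocks $C_n \cap C_{n-1}^\perp$, whose span is dense in $L^2(X,\mu)$ because $\cup_n C_n$ is dense. Since the $\alpha_n$ are real, $D$ is self-adjoint on $\mathcal{D}$ with discrete real spectrum $\{\alpha_n\}$. The defining axioms of a spectral triple are then two: that $(1+D^2)^{-1}$ is compact, and that $[D,f]$ is bounded for every $f$ in the algebra. The resolvent $(1+D^2)^{-1}$ acts as multiplication by $(1+\alpha_n^2)^{-1}$ on each finite-dimensional block; it is compact if and only if these eigenvalues accumulate only at $0$, i.e.\ if and only if $\alpha_n \to \infty$. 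This is the content of the ``if and only if'' in (1). For the commutator condition, I would invoke Lemma~\ref{lemma:Cn}: for $f \in C_n$ we have $[D,f]|C_n^\perp = 0$, so $[D,f]$ is supported on the finite-dimensional space $C_n$ and is automatically bounded. Hence for every $f$ in $C_\infty(X)$ the commutator is bounded regardless of the growth rate, and the only obstruction to being a spectral triple is compactness of the resolvent.

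For part (2), the main computation is to estimate $\Vert [D,u] \Vert$. The key point is that $u$ is essentially a ``shift by one level'' in the filtration: conjugating by $\sigma$ moves a word of length $n$ to another word of length $n$ (up to the index-bookkeeping in the definitions of $\pi$ and the cylinders $U(w)$), so $u$ does not preserve the blocks $C_n \cap C_{n-1}^\perp$ but maps level-$n$ data into a bounded spread of adjacent levels. I would compute $[D,u]$ on a vector $\xi_n \in C_n \cap C_{n-1}^\perp$ and show that $u$ carries it into the blocks of index $n-1$, $n$, and $n+1$ (the precise spread coming from how the shift acts on the centered cylinder sets), so that $[D,u]\xi_n$ is a combination of vectors in those blocks with coefficients controlled by the \emph{differences} $\alpha_{n} - \alpha_{n-1}$ and $\alpha_{n+1}-\alpha_{n}$. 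Boundedness of $[D,u]$ then becomes boundedness of these consecutive differences, giving the stated equivalence.

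The step I expect to be the main obstacle is the precise tracking in part (2) of how $u$ interacts with the two-sided, centered indexing of the cylinder sets $U(w)$. The definition of $U(w)$ distinguishes even and odd word lengths (centering the word differently in the two cases), and the shift $\sigma$ shuffles these, so I would need to check carefully that $u$ maps $C_n$ into at most $C_{n+1}$ (and its adjoint into at most $C_{n+1}$ as well), with the relevant matrix coefficients of $[D,u]$ involving only $\alpha_{m}-\alpha_{m'}$ for $|m-m'|$ bounded. Once that finite-band structure is established, the equivalence with boundedness of $(\alpha_n - \alpha_{n-1})$ follows by a routine operator-norm estimate: a banded block-diagonal operator whose entries are these differences is bounded iff the differences are, which one sees by the triangle inequality in one direction and by testing on suitable unit vectors supported near a single level in the other. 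Combining (1) and (2) immediately yields (3).
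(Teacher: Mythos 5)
Your route is essentially the paper's: part (1) rests on Lemma~\ref{lemma:Cn} (for $f \in C_{n}$ the commutator $[D,f]$ vanishes on $C_{n}^{\perp}$, hence is bounded with no condition on the $\alpha_{n}$, so the only constraint is compactness of $(1+D^{2})^{-1}$, which holds iff $\alpha_{n} \to \infty$), part (2) on the finite-band structure of $u$ relative to the filtration $(C_{n})$, and part (3) is the conjunction of the first two. The one concrete slip is your banding claim in part (2): it is \emph{not} true that $u$ maps $C_{n}$ into $C_{n+1}$, and the careful check you defer would fail for odd $n$. Because the cylinders are centered and the shift re-centers them, for $n = 2m+1$ the set $\sigma(U(w))$ is a condition on the coordinates $-(m+1), \ldots, m-1$, and this window is not contained in the window $-m, \ldots, m+1$ of length-$(n+1)$ cylinders; one needs length $n+2$. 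The paper's precise statement is that the partition $\sigma(\mathcal{P}_{n})$ is finer than $\mathcal{P}_{n-2}$ and coarser than $\mathcal{P}_{n+2}$, whence $C_{n-2} \subseteq uC_{n} \subseteq C_{n+2}$; for $\xi \in C_{n} \cap C_{n-1}^{\perp}$ this gives $u\xi \in C_{n+2} \cap C_{n-3}^{\perp}$, where $\alpha_{n-2} \leq D \leq \alpha_{n+2}$, and the key identity $[D,u]\xi = (D - \alpha_{n})u\xi$ then yields the equivalence. So the spread is two levels, not one.

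This error is localized and repairable without changing your strategy: since $(\alpha_{n})$ is increasing, boundedness of $\alpha_{n+2} - \alpha_{n-2}$ is equivalent to boundedness of the consecutive differences, so your hedged formulation (matrix coefficients involving only $\alpha_{m} - \alpha_{m'}$ with $\vert m - m' \vert$ bounded) is exactly what survives, and your banded-operator norm estimate then goes through as in the paper. One further caution: in the converse direction of (2) you say one tests on ``suitable unit vectors supported near a single level''; this needs the observation that $[D,u]\xi = (D-\alpha_{n})u\xi$ and that $u$ genuinely moves vectors across levels (e.g.\ $uC_{n} \supseteq C_{n-2}$ with the containments strict infinitely often for an infinite subshift), which is worth spelling out, though the paper itself is equally terse at this point.
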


We use here Connes' definition of a spectral triple (or $K$-cycle)~\cite[Chapter IV-2]{Con94} (see also~\cite[Def.~9.16]{G-BVF01}).
There is a variety of conditions that may be required on a spectral triple depending on the context.
For example, in the context of quantum spaces, Rieffel~\cite{Rie99,Rie04} requires that a certain metric induced by $D$
on the state space of the algebra coincides with the weak-* topology.
We make no such assumption at this stage, however we shall investigate this Connes metric in Section~\ref{sec:connes}.

We remark also that  $D$ is defined 
to be a positive operator. It follows that the associated
Fredholm module has a trivial class in $K$-homology and carries no index data.

\begin{proof}
 We see from the third part of Lemma~\ref{lemma:Cn}, that for any $f$ in $C_{\infty}(X)$,
 $f$ maps the domain of $D$ to itself and from the final part, $[D,f]$ obviously 
 extends to a bounded linear operator on $L^{2}(X, \mu)$. The final condition is that 
 $(1+D^{2})^{-1}$ is compact  is evidently equivalent to its eigenvalues
 converging to zero, and also to the sequence $\alpha_{n}, n \geq 0$ tending to 
 infinity.
 
 Fix a positive integer $n$ and let $\mathcal{P}_{n}$ be the partition of $X$
 given by the cylinder sets:
 $\{ U(w) \mid w \in X_{n} \}$. It is can easily be seen  that the partition
 \[
 \sigma(\mathcal{P}_{n}) = \{ \sigma(U(w)) \mid w \in X_{n} \}
 \]
 is clearly finer than $\mathcal{P}_{n-2}$ while it is coarser than $\mathcal{P}_{n+2}$. 
 (It is probably easiest to check the cases of $n$ even and odd separately.)
 From this it follows that  $C_{n-2} \subset uC_{n} \subset C_{n+2}$. Hence, we also have
$C_{n-2}^{\perp} \supset uC_{n}^{\perp} \supset C_{n+2}^{\perp}$ as well.  
Let $\xi$ be any vector in $C_{n} \cap C_{n-1}^{\perp}$. We know that 
$u \xi$ is in $C_{n+2} \cap C_{n-3}^{\perp}$ and on this space, which is invariant under $D$, 
we have $\alpha_{n-2} \leq D \leq \alpha_{n+2}$.
Now, we compute
\[
 [D, u] \xi = D u \xi - u D \xi  = Du\xi - \alpha_{n} u \xi = (D - \alpha_{n}) u \xi,
\]
and the conclusion follows.
\end{proof}

In particular cases, we will consider the following spectral triple.

\begin{defn}\label{def:DX}
 Let $X$ be a subshift and let $\mu$ be a $\sigma$ invariant probability measure with support $X$. The spectral triple $(C_\infty(X), L^2(X,\mu), D_X)$ is defined as above, associated with the sequence $\alpha_n=n$.
\end{defn}

\begin{thm}
Let $X \subseteq \mathcal{A}^{\mathbb{Z}}$ and
 $X' \subseteq \mathcal{A}'^{\mathbb{Z}}$ be subshifts with 
invariant probability measures $\mu$ and $\mu'$, respectively.
If there exists a homeomorphism 
$h : X \rightarrow X'$ such that
$h \circ \sigma|_{X} = \sigma|_{X'} \circ h$ and 
$h^{*}(\mu') = \mu$, then $v \xi = \xi \circ h$ 
is a unitary operator from $L^{2}(X', \mu')$ to 
$L^{2}(X, \mu)$ such that 
\begin{enumerate}
\item $v C_{\infty}(X') v^{*} = C_{\infty}(X)$,
\item $ v u_{X'} v^{*} = u_{X}$,
\item $v$ maps the domain of 
$D_{X'}$ onto the domain of $D_{X}$ and $vD_{X'}v^{*} - D_{X}$ is bounded.
\end{enumerate}

\end{thm}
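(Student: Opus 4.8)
The plan is to dispatch the three conclusions in increasing order of difficulty, obtaining the unitarity of $v$ first, peeling off the two algebraic relations, and then reducing the comparison of Dirac operators to the mechanism already used in the proof of Theorem~\ref{thm:spectral-triple}.

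First I would verify that $v$ is unitary. Since $h_{*}\mu = \mu'$ (which is the content of $h^{*}\mu' = \mu$), the change of variables $y = h(x)$ gives $\int_{X}\abs{\xi(h(x))}^{2}\,d\mu(x) = \int_{X'}\abs{\xi(y)}^{2}\,d\mu'(y)$, so $v$ is isometric; as $h$ is a homeomorphism, $\eta \mapsto \eta \circ h^{-1}$ is a two-sided inverse, whence $v^{*}\eta = \eta \circ h^{-1}$ and $v$ is unitary. A one-line computation then shows that for $f \in C(X')$ acting by multiplication, $vfv^{*}$ is multiplication by $f \circ h$, while $(v u_{X'} v^{*}\eta)(x) = \eta\bigl(h^{-1}(\sigma^{-1}h(x))\bigr)$. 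Conclusion (1) follows because $f \mapsto f \circ h$ is a bijection of the locally constant functions $C_{\infty}(X') \to C_{\infty}(X)$; conclusion (2) follows by inserting the conjugacy relation $\sigma^{-1}\circ h = h \circ \sigma^{-1}$, which collapses the expression to $\eta(\sigma^{-1}(x)) = (u_{X}\eta)(x)$. Note that neither of these uses the filtration, only that $v$ is a unitary implementing $\xi \mapsto \xi \circ h$.

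The substance is in (3), and the key is to show that $v$ interleaves the two filtrations up to a universal shift: there is a constant $c$ with
\[
 C_{n-c}(X) \subseteq v\,C_{n}(X') \subseteq C_{n+c}(X) \qquad (n \geq 0).
\]
To obtain this I would use that $h$ commutes with the shift and is continuous: the coordinate map $x \mapsto h(x)_{0}$ is a continuous function into the finite alphabet $\mathcal{A}'$, hence locally constant, so it depends only on $x_{-r},\dots,x_{r}$ for some $r$; by shift-equivariance $h(x)_{j} = \Phi(x_{j-r},\dots,x_{j+r})$, i.e.\ $h$ is a block code of radius $r$ (the Curtis--Hedlund--Lyndon argument). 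Consequently, for $w' \in X'_{n}$ the set $h^{-1}(U(w'))$ is determined by the coordinates of $x$ in a centred window widened by $2r$, so $v\,C_{n}(X') = \Span\{\chi_{h^{-1}(U(w'))} \tq w' \in X'_{n}\} \subseteq C_{n+c}(X)$ with $c \approx 2r$ (the even/odd parity convention in the cylinders changes $c$ only by an additive constant). Running the same argument for the block code $h^{-1}$ gives the reverse inclusion after enlarging $c$.

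Granting the interleaving, both assertions of (3) follow as in the proof of Theorem~\ref{thm:spectral-triple}. Writing $P_{n}$ for the projection onto $C_{n}(X)$ and $R_{n}$ for the projection onto $v\,C_{n}(X')$, the interleaving gives $P_{n-c} \leq R_{n} \leq P_{n+c}$. Because membership in the domain of $D_{X}$ depends only on the decay of the components weighted against $\alpha_{n}=n$, shifting the filtration index by the bounded amount $c$ leaves the convergence of $\sum_{n} n^{2}\nr{\xi_{n}}^{2}$ unchanged, so $v\mathcal{D}_{X'} = \mathcal{D}_{X}$. For the norm bound, any eigenvector $\xi$ of $v D_{X'} v^{*}$ of eigenvalue $n$ lies in $v\bigl(C_{n}(X') \cap C_{n-1}(X')^{\perp}\bigr) \subseteq C_{n+c}(X) \cap C_{n-1-c}(X)^{\perp}$, on which $D_{X}$ takes values in $[n-c,\,n+c]$; hence $\nr{(D_{X} - v D_{X'} v^{*})\xi} = \nr{(D_{X}-n)\xi} \leq c\,\nr{\xi}$, and since such $\xi$ span $L^{2}(X,\mu)$ we get $\nr{v D_{X'} v^{*} - D_{X}} \leq c$. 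The main obstacle is the interleaving step: it is exactly here that the \emph{dynamical} hypothesis (shift-equivariance of $h$, and not merely that $h$ is a homeomorphism) is indispensable, since it forces the modulus of continuity of $h$ to be linear and thereby produces the uniform constant $c$.
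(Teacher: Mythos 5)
Your proposal follows essentially the same route as the paper's proof: unitarity and parts (1)--(2) by direct computation, and part (3) via the Curtis--Hedlund--Lyndon theorem, realizing $h$ (and $h^{-1}$) as a sliding block code of some radius $N$ to obtain the two-sided interleaving of filtrations --- in the paper's notation, $v C'_{n-2N} v^{*} \subseteq C_{n}$ and $C_{n-2N} \subseteq v C'_{n} v^{*}$, which is exactly your $C_{n-c}(X) \subseteq vC_{n}(X') \subseteq C_{n+c}(X)$ with $c = 2N$ --- from which the paper concludes $\Vert v D_{X'} v^{*} - D_{X} \Vert \leq 2N$. You additionally spell out the statement about domains, which the paper leaves implicit, and you re-derive CHL rather than citing it; both are fine.

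One step is stated too glibly. From the per-eigenspace estimate $\Vert (D_{X} - n)\xi \Vert \leq c \Vert \xi \Vert$ for $\xi$ in $E_{n} := v\bigl(C_{n}(X') \cap C_{n-1}(X')^{\perp}\bigr)$ you conclude ``since such $\xi$ span $L^{2}(X,\mu)$ we get $\Vert vD_{X'}v^{*} - D_{X}\Vert \leq c$''. A uniform bound on each member of a family of mutually orthogonal subspaces does \emph{not} bound the operator norm in general: the operator $T$ defined on an orthonormal basis by $Te_{n} = e_{0}$ has norm $1$ on each line $\C e_{n}$ but is unbounded on $\ell^{2}$. The conclusion is nevertheless salvageable from the band structure already visible in your own setup: letting $Q_{m}$ be the projection onto $C_{m} \cap C_{m-1}^{\perp}$ and $R_{n}$ that onto $E_{n}$, your computation shows $A := vD_{X'}v^{*} - D_{X}$ satisfies $A = \sum_{k=-c}^{c} (-k) T_{k}$ with $T_{k} = \sum_{n} Q_{n+k} R_{n}$, and each $\Vert T_{k} \Vert \leq 1$ because for fixed $k$ the ranges of the $Q_{n+k}$ (and the sources $E_{n}$) are mutually orthogonal; this gives boundedness, with the cruder constant $\sum_{|k| \leq c} |k|$. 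If one wants the sharp bound $c$, note that with $\alpha_{n} = n$ one has $D_{X} = \sum_{n \geq 0} (I - P_{n})$ and $vD_{X'}v^{*} = \sum_{n \geq 0} (I - vP'_{n}v^{*})$ strongly on the common domain, so $A = \sum_{n \geq 0} (P_{n} - vP'_{n}v^{*})$; the interleaving $P_{n-c} \leq vP'_{n}v^{*} \leq P_{n+c}$ then yields $\vert \langle A\xi, \xi \rangle \vert \leq c \Vert \xi \Vert^{2}$ by telescoping, and since $A$ is symmetric this gives $\Vert A \Vert \leq c$. This is a repair of one detail rather than a different argument --- the paper itself asserts the final norm estimate with only an ``it is easy to see.''
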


\begin{proof}
That $v$ is unitary and the first two parts of the conclusion
all follow trivially from the hypotheses. For the last 
part, we use the result of Curtis--Lyndon--Hedlund~\cite[Theorem 6.2.9]{LM95} that 
$h$ arises from a sliding block code. That is, there
exist  positive integers $M,N$ and function 
$h_{0}: X_{N+M+1} \rightarrow \mathcal{A}'$ such that 
\[
h(x)_{n} = h_{0}(x_{n-M} \ldots x_{n+N}),
\]
for all $x$ in $X$. We lose 
no generality in assuming that $M=N$.
It then easily follows that for any $w$ in 
$X_{n}, n > N$, $h(U(w))$ is contained in 
$U'(w')$, where $w'$ is in $X_{n-2N}$ and is given by 
$w'_{i} = h_{0}(w_{i-N} \ldots w_{i+N})$, for all 
appropriate $i$. It follows that 
\[
v C'_{n-2N} v^{*} \subseteq C_{n}.
\]
 A similar result
also holds for $h^{-1}$ and we may assume without loss
of generality, it uses the same $N$. This means that 
\[
v C'_{n} v^{*} \supseteq C_{n-2N},
\]
as well. From these two facts, it is easy to see that 
$\Vert vD_{X'}v^{*} - D_{X} \Vert \leq 2N$.
\end{proof}

\section{Summability}

The goal of this section is to study the summability of the operator $D_{X}$ as introduced in Definition~\ref{def:DX}, for a shift space $X$.
For $p \geq 1$, the
 spectral triple $D_X$ is said to be
\emph{$p$-summable} if $(1+D_X^2)^{-p/2}$ is trace class; and \emph{$\theta$-summable} if for all $t>0$, $e^{-tD_X^2}$ is trace class. 

Recall the definitions from Section~\ref{sec:subshifts}, in particular Equation~\eqref{eq:entropy}.
Also note that in our construction earlier, $\dim (C_{n}) = \# X_{n}$, for all $n \geq 1$, while
$\dim(C_{n} \cap C_{n-1}^{\perp}) = \# X_{n} - \# X_{n-1}$, for all $n \geq 0$.

\begin{thm}
 \label{thm:theta-summable}
 Let $X \subset \mathcal{A}^{\Z}$ be a subshift and $D_{X}$ be defined as in Definition~\ref{def:DX}. 
 \begin{enumerate}
  \item If $s > h(X)$, then $e^{-s D_{X} } $ is trace class. 
  \item If $s > 0$ is such that $e^{-s  D_{X} }$ is trace class, then $s \geq h(X)$.
 \end{enumerate}
\end{thm}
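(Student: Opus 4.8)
The plan is to reduce the trace-class condition to a convergence question about a series whose terms are dictated by the eigenvalues $\alpha_n = n$ and the dimensions of the eigenspaces. Since $D_X$ acts as multiplication by $n$ on $C_n \cap C_{n-1}^\perp$, the operator $e^{-sD_X}$ is diagonal with eigenvalue $e^{-sn}$ of multiplicity $\dim(C_n \cap C_{n-1}^\perp) = \# X_n - \# X_{n-1}$. Hence $e^{-sD_X}$ is positive and
\[
 \Tr(e^{-sD_X}) = \sum_{n \geq 0} (\# X_n - \# X_{n-1}) e^{-sn},
\]
with the convention $\# X_{-1} = 0$ and $\# X_0 = 1$. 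The whole theorem is therefore a statement about the radius of convergence of this series, and the key input is the relationship between the exponential growth rate of $\# X_n$ and the entropy $h(X)$ from Equation~\eqref{eq:entropy}.

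For part (1), I would first observe that the differenced series can be compared to the undifferenced one: since the terms $\# X_n - \# X_{n-1}$ are nonnegative (the complexity is nondecreasing) and bounded above by $\# X_n$, it suffices to show $\sum_n \# X_n\, e^{-sn} < \infty$ when $s > h(X)$. By definition $h(X) = \inf_n \frac{\log \# X_n}{n}$, but more usefully it is the limit, so for any $\eps > 0$ with $h(X) + \eps < s$ there is an $N$ such that $\# X_n \leq e^{(h(X)+\eps)n}$ for all $n \geq N$. Then the tail of the series is dominated by the convergent geometric series $\sum_{n \geq N} e^{(h(X)+\eps - s)n}$, giving trace-class. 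Care is needed only to phrase the comparison cleanly; using Abel summation to pass between the differenced and undifferenced series is an alternative, but the direct bound $\#X_n - \#X_{n-1} \leq \#X_n$ avoids any subtlety.

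For part (2), I would argue by contraposition: assuming $0 < s < h(X)$, I want to show the series diverges, hence $e^{-sD_X}$ is not trace class. Here I would again want to replace the differences $\# X_n - \# X_{n-1}$ by $\# X_n$, but now in the \emph{lower} direction, which is where the main obstacle lies. A naive termwise lower bound fails because individual differences can be small even when $\# X_n$ is large. The clean fix is Abel summation (summation by parts): writing $a_n = \# X_n - \# X_{n-1}$ and noting $\# X_n = \sum_{k \leq n} a_k$, one has
\[
 \sum_{n=0}^{\infty} (\# X_n - \# X_{n-1}) e^{-sn}
   = (1 - e^{-s}) \sum_{n=0}^{\infty} \# X_n\, e^{-sn},
\]
valid as an identity of (possibly infinite) positive series, so the two series converge or diverge together. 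Then, since $s < h(X)$ and $h(X)$ is the limit of $\frac{\log \# X_n}{n}$, for small enough $\eps$ there are infinitely many $n$ (in fact all large $n$, using that the infimum equals the limit) with $\# X_n \geq e^{(h(X)-\eps)n} \geq e^{sn}$, so the terms $\# X_n\, e^{-sn}$ do not tend to zero and the series diverges.

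The main obstacle is precisely the passage between the differenced complexity and the raw complexity in part (2); everything else is a routine geometric estimate. The summation-by-parts identity above is the cleanest way around it, since it converts the question entirely into one about $\sum \# X_n\, e^{-sn}$, for which the definition of entropy directly controls convergence. I would also double-check the boundary is handled correctly: the theorem deliberately leaves the case $s = h(X)$ open (part (1) needs strict inequality and part (2) only yields $s \geq h(X)$), which is consistent because at $s = h(X)$ the term $\# X_n e^{-sn}$ need neither summable nor bounded away from zero without finer information on subexponential corrections.
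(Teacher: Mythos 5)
Your proposal is correct and follows essentially the same route as the paper: part (1) by bounding $\# X_n - \# X_{n-1} \leq \# X_n$ and comparing with a convergent geometric series when $s > h(X)$, and part (2) by Abel summation relating $\sum_n (\# X_n - \# X_{n-1}) e^{-sn}$ to $\sum_n \# X_n \, e^{-sn}$ and observing that the latter's terms cannot tend to zero when $s < h(X)$. The only cosmetic differences are that you phrase the summation by parts as an exact identity of positive series (justified by Tonelli), where the paper uses a partial-sum inequality, and that you argue part (2) by contraposition rather than directly; these are logically equivalent.
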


\begin{proof}
 For the first part, choose $s >  s' > h(X)$. There exists $n_{0} \geq 1$ such that 
$\frac{1}{n} \log(\# X_{n}) < s'$, for all $n \geq n_{0}$. Hence, we have 
$\# X_{n} \leq e^{ns'}$ and $e^{-sn} \# X_{n}   \leq e^{n(s'-s)}$, for $n \geq n_{0}$. Since $s' < s$ the series
$e^{n(s'-s)}$ is a summable geometric series. It follows that $e^{-sn} \# X_{n} $ is also summable.
Also, we have
\[
 \Tr(e^{-sD_{X}})   =   \sum_{n =0}^{\infty} e^{-sn} \dim(C_{n} \cap C_{n-1}^{\perp}) 
                  \leq \sum_{n =0}^{\infty} e^{-sn} \dim(C_{n}). 
\]
Since $\dim (C_n) = \# X_n$, this completes the proof of the first part.

Conversely, suppose that  $e^{-s  D_{X} } $ is trace class. This operator is positive and its trace is
\[
 \Tr(e^{-s  D_{X}} ) = \sum_{n=0}^{\infty} e^{-sn}( \dim(C_{n}) - \dim(C_{n-1})).
\]
An Abel transform (summation by part) on the partial sum gives:
\[
  \sum_{n=0}^{N} e^{-sn}( \dim(C_{n}) - \dim(C_{n-1})) 
           \geq   \sum_{n=0}^{N-1} (1 - e^{-s} ) e^{-sn} \dim(C_{n}).
\]
Since the left-hand side converges, the series 
$\sum e^{-sn} \dim(C_{n})$ is convergent and 
hence the terms go to zero. Thus, we have 
\[
  0  = \lim_{n \rightarrow \infty} e^{-sn} \dim(C_{n}) 
     = \lim_{n \rightarrow \infty} e^{-sn + \log(\dim(C_{n}))}
     = \lim_{n \rightarrow \infty} \bigl( e^{-s + \frac{1}{n} \log(\# X_{n})} \bigr)^{n}
\]
We conclude that for $n$ sufficiently large, $e^{-s + \frac{1}{n} \log(\# X_{n})}$ 
is less than $1$. On the other hand we know that 
$\frac{1}{n}\log(\# X_{n} )$ tends to $h(X)$. The conclusion follows.
\end{proof}

\begin{thm}
 \label{thm:finite-summable}
  Let $X \subset \mathcal{A}^{\Z}$ be a subshift and $D_{X}$ be defined as in Definition~\ref{def:DX}.
  \begin{enumerate}
   \item  If, for some constants $C, s_{0} \geq 1$, 
   $\# X_{n} \leq C n^{s_{0}}$, for all $n \geq 1$, then $D_{X}$ is
  $s$-summable for all $s > s_{0} $.
  \item   If $D_{X}$ is
  $s$-summable for some $s > 0$, then  there exists $C \geq 1$ such that 
  $\# X_{n} \leq C n^{s}$, for $n \geq 1$.
  \end{enumerate}
\end{thm}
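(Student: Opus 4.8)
The plan is to prove Theorem~\ref{thm:finite-summable} by mimicking the structure of the proof of Theorem~\ref{thm:theta-summable}, replacing the exponential weights $e^{-sn}$ with polynomial-type weights coming from the eigenvalues $\alpha_n = n$. Recall that $s$-summability means $(1+D_X^2)^{-s/2}$ is trace class, and since the eigenvalue of $D_X$ on $C_n \cap C_{n-1}^\perp$ is $n$ with multiplicity $\dim(C_n)-\dim(C_{n-1}) = \# X_n - \# X_{n-1}$, we have
\[
 \Tr\bigl((1+D_X^2)^{-s/2}\bigr) = \sum_{n=0}^\infty (1+n^2)^{-s/2}\bigl(\# X_n - \# X_{n-1}\bigr).
\]
The whole argument reduces to deciding when this series converges in terms of the growth of $\# X_n$.

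For the first (sufficiency) part, I would assume $\# X_n \leq C n^{s_0}$ and bound the trace by dropping the subtraction of $\#X_{n-1}$ (legitimate since the summands of the Abel-transformed series are what matters, but more simply one compares against $\sum (1+n^2)^{-s/2}\#X_n$). Up to constants, $(1+n^2)^{-s/2} \asymp n^{-s}$ for $n \geq 1$, so the tail is dominated by $\sum_n n^{-s}\cdot C n^{s_0} = C\sum_n n^{s_0 - s}$, which converges precisely when $s - s_0 > 1$. This is weaker than the claimed $s > s_0$, so the naive comparison is lossy by $1$ in the exponent; the fix is to perform the Abel/summation-by-parts transform exactly as in Theorem~\ref{thm:theta-summable} so that the convergence of $\sum_n \bigl((1+n^2)^{-s/2}-(1+(n+1)^2)^{-s/2}\bigr)\#X_n$ is what one checks. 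The difference of consecutive weights behaves like $n^{-s-1}$, and against $\#X_n \leq Cn^{s_0}$ this gives $\sum_n n^{s_0 - s - 1}$, convergent exactly when $s > s_0$. This recovers the sharp exponent.

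For the second (necessity) part, I would run the summation-by-parts argument in the forward direction, exactly paralleling the converse in Theorem~\ref{thm:theta-summable}: if the trace converges, then by Abel transform the series $\sum_n \bigl((1+n^2)^{-s/2}-(1+(n+1)^2)^{-s/2}\bigr)\#X_n$ converges, hence its terms tend to zero and in particular the partial sums are bounded. Since the weight difference is comparable to $n^{-s-1}$ from below for large $n$, boundedness of $\sum_{n\leq N} n^{-s-1}\#X_n$ forces control on $\#X_n$. The cleanest route is to observe that convergence of the trace forces $(1+n^2)^{-s/2}\#X_n \to 0$, which already gives $\#X_n = o(n^s)$, and then to upgrade this to a uniform bound $\#X_n \leq Cn^s$ for all $n$ by absorbing the finitely many small-$n$ terms into the constant $C$ (every individual value $\#X_n / n^s$ is finite, and the tail is bounded by the limit being $0$).

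The main obstacle is getting the sharp exponent $s > s_0$ rather than $s > s_0 + 1$ in the sufficiency direction; this is precisely why the summation-by-parts transform is essential and why a crude term-by-term comparison of $(1+n^2)^{-s/2}\#X_n$ against a convergent $p$-series does not suffice. The key estimate to verify carefully is that the discrete difference of the weights, $(1+n^2)^{-s/2}-(1+(n+1)^2)^{-s/2}$, is comparable to $n^{-s-1}$ (both from above and, for the converse, from below) for large $n$ — this gains exactly the one power needed and matches the two directions of the theorem. Everything else is routine comparison of series.
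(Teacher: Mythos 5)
Your first part follows the paper's proof essentially verbatim: summation by parts on $\sum_n (1+n^2)^{-s/2}\bigl(\dim(C_n)-\dim(C_{n-1})\bigr)$, then the mean value theorem applied to $f(x)=-(1+x^2)^{-s/2}$ to get $(1+n^2)^{-s/2}-(1+(n+1)^2)^{-s/2}\leq sn(1+n^2)^{-s/2-1}\leq sn^{-s-1}$, which gains exactly the one power of $n$ you correctly identify as the crux; the only thing you elide is the boundary term $(1+N^2)^{-s/2}\dim(C_N)$ from the Abel transform, which the paper checks tends to zero (immediate from $\#X_n\leq Cn^{s_0}$ and $s>s_0$). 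For the second part, however, you take a genuinely different route. The paper never tries to show $(1+n^2)^{-s/2}\#X_n\to 0$; instead it runs a duality argument: for an arbitrary $\ell^1$ sequence $(\beta_n)$ it sets $r_n=\sum_{m\geq n}\abs{\beta_m}$, defines the bounded operator $B$ equal to $r_n$ on $C_n\cap C_{n-1}^\perp$ (so $B$ commutes with $D_X$ and $(1+D_X^2)^{-s/2}B$ is trace class), and after a summation by parts concludes that $\sum_n \abs{\beta_n}\dim(C_n)(1+n^2)^{-s/2}<\infty$ for every $\ell^1$ sequence $(\beta_n)$, whence $\dim(C_n)(1+n^2)^{-s/2}$ is bounded and $\#X_n\leq Cn^s$. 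That trick needs no lower bound on the weight differences at all. Your direct route is viable and even yields the sharper conclusion $\#X_n=o(n^s)$, but the step you present as an observation --- that convergence of the trace forces $(1+n^2)^{-s/2}\#X_n\to 0$ --- is the one place needing an actual argument: convergence of the transformed series only gives $\bigl((1+n^2)^{-s/2}-(1+(n+1)^2)^{-s/2}\bigr)\#X_n\to 0$, i.e.\ $\#X_n=o(n^{s+1})$, which is off by a power. Two quick patches are available with tools you already flagged: either note that the Abel boundary term $(1+N^2)^{-s/2}\#X_N$ is the difference of two convergent partial sums, hence converges to some $L\geq 0$, and $L>0$ would force $\#X_n\geq cn^s$ eventually, making the transformed series dominate a harmonic series via your from-below estimate $(1+n^2)^{-s/2}-(1+(n+1)^2)^{-s/2}\geq cn^{-s-1}$, a contradiction; or use monotonicity of $\#X_n$ (the maps $\pi:X_n\to X_{n-1}$ are onto) to compare $\#X_N\, N^{-s}$ with the tail block $\sum_{N<n\leq 2N} n^{-s-1}\#X_n$ of the convergent series. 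With either two-line insertion your proof is complete; the trade-off is that the paper's $\ell^1$-multiplier argument avoids lower bounds entirely, while yours is more elementary and quantitatively stronger.
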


\begin{proof}
 For the first part, let $s > s_0$ and compute
\begin{multline*}
  \Tr\bigl( (1+ D_{X}^{2})^{-s/2} \bigr) =  \lim_{N \rightarrow \infty} 
                   \sum_{n=0}^{N} (1 + n^{2})^{-s/2} \big( \dim(C_{n}) - \dim(C_{n-1}) \big)  \\
       =  \lim_{N \rightarrow \infty} \bigg[ \sum_{n=1}^{N} \Big( \big(1 + n^{2} \big)^{-s/2} - \big(1 + (n+1)^{2} \big)^{-s/2}  \Big) \dim(C_{n}) \\
           + \big(1 + N^{2}\big)^{-s/2}  \dim(C_{N}) \bigg] 
\end{multline*}
The second term $(1 + N^{2})^{-s/2}  \dim(C_{N})$ tends to zero
from the hypothesis. For the first term, we denote
 \[
   S_N := \sum_{n=1}^{N}  \bigl( (1 + n^{2})^{-s/2} - (1 + (n+1)^{2})^{-s/2}  \bigr) \dim(C_{n})
 \] 
for convenience.
Consider the function $f(x) = -(1 + x^{2})^{-s/2} $.
Its derivative  is $f'(x) =  s x(1 + x^{2})^{-s/2 - 1}$ which is positive and decreasing for $x \geq 1$.
By an application of the mean value theorem to this function on the interval $[n, n+1]$, we obtain
\[
 \big(1 + n^{2}\big)^{-s/2} - \big(1 + (n+1)^{2} \big)^{-s/2}  \leq s n ( 1 + n^{2})^{-s/2 -1} \leq sn^{-s-1}.
\]
We multiply by $\dim(C_{n})$ and sum. Since $s > s_0$, we obtain
\[
   S_{N}   \leq  s \sum_{n=1}^{N} n^{-s-1} \dim(C_{n})    
           \leq  s \sum_{n=1}^{N} C n^{s_0-s-1}
          <    \infty.
\]

 For the second statement, assume $D_X$ is $s$-summable for a fixed $s$, and
let $\beta_{n}, n \geq 0$ be any $\ell^{1}$ sequence.
We denote $r_n := \sum_{m \geq n} \abs{\beta_m}$.
Define a bounded operator $B$ on 
$L^{2}(X, \mu)$ by setting it equal to $r_n$ on the space 
$C_{n} \cap C_{n-1}^{\perp}$, for all $n \geq 0$. Notice that $B$ is positive and commutes with $D_{X}$.
The operator $(1+D_X^{2})^{-s/2}B$ is then a positive, trace class operator so, for every $N \geq 1$,
we have  (line 2 to line 3 is a summation by parts):
\begin{align*}
 \infty & >    \Tr \Big( (1+D_{X}^{2})^{-s/2}B \Big) \\
        & \geq    \sum_{n=0}^{N} (1 + n^{2})^{-s/2}  
                  r_n \big( \dim(C_{n}) - \dim(C_{n-1}) \big)  \\
        & \geq \sum_{n=0}^{N-1} \dim(C_{n}) 
                  \bigl[ r_n (1 + n^{2})^{-s/2} - r_{n+1}  (1 + (n+1)^{2})^{-s/2} \bigr] \\
        & \geq \sum_{n=0}^{N-1} \dim(C_{n}) 
                  \bigr[ (r_n -r_{n+1}) (1 + n^{2})^{-s/2}   \bigl] 
         = \sum_{n=0}^{N-1} \dim(C_{n}) 
                 \bigr[ \vert \beta_{n} \vert (1 + n^{2})^{-s/2} \bigl].
\end{align*}
We have shown that for any $(\beta_{n})_{n \geq 0} \in \ell^1$, the sequence
$(\beta_{n} \dim(C_{n}) (1 + n^{2})^{-s/2})_{n \geq 0}$ is also in $\ell^{1}$. It follows that
$(\dim(C_{n}) (1 + n^{2})^{-s/2})_{n \geq 0}$ is bounded.
The conclusion follows after noting that $(1 + n^{2})^{s/2} \leq C n^{s}$, for some constant $C$ and all $n \geq 1$.

\end{proof}

\section{The Connes metric}
\label{sec:connes}

In this section, we develop some tools to analyze the Connes metric and apply them to our specific cases of interest.
These results are very technical so it may be of some value to indicate, at least vaguely, what is going on.
It seems that a number of factors influence the behaviour of the Connes metric for a subshift $X$.
The two most important ones are:
\begin{enumerate}
 \item For a given $x$ in $X$, the sequence $\{ n \in \N \mid \pi_{n}(x) \in X_{n}^{+} \}$. 
 That is, in looking at the sequence of words $\pi_{n}(x)$, how often do we have strict 
 containment $U(\pi_{n}(x)) \supsetneq U(\pi_{n+1}(x))$? Generally speaking, large gaps 
 between such values of $n$ will tend to help keep the Connes metric finite. For example, in 
 linearly recurrent (in particular substitutive) subshifts, this sequence tends to grow exponentially (see
 the discussion after Lemma~\ref{lem:LR-bound-Rw}).
 \item For the various $w$, the ratio ${\mu(\pi(w))}/{\mu(w)}$. These ratios remaining bounded
 seem to help keep the Connes metric finite.
\end{enumerate}

Our main interest is in the operator $D_{X}$ whose eigenvalues are the natural numbers. However, we will treat
the more general case of $D$ for most of our estimates, partly so that we can see the exact
r\^{o}le of the eigenvalues $\alpha_{n}$. We will assume that this sequence is increasing and unbounded.

\subsection{Technical preliminaries}
\label{ssec:connes-technical}

For $w$ in $X^{+}_{n}$, we introduce a space of functions 
\[
 F_{w} = \Span \big\{ \xi(w') \mid \pi(w') = w \big\} \cap C_{n}^{\perp}.
\]
In fact, we could also make the same definition for any $w$ in $X_{n}$, 
but $F_{w}$ is non-zero if and only if $w$ is in $X_{n}^{+}$.
This is a finite-dimensional space and it will be convenient to have an estimate
comparing the supremum norm with the $L^{2}$-norm for its elements.

\begin{lemma}
\label{lemma:L2-Linfinity}
 For $w$ in $X^{+}_{n}$ and $f$ in $F_{w}$, we have 
 \[
  \Vert f \Vert_{2} \leq \mu(w)^{\frac{1}{2}}  \Vert f \Vert_{\infty}  \leq R(w)^{\frac{1}{2}}  \Vert f \Vert_{2}.
 \]
\end{lemma}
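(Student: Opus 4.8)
The plan is to make the structure of $F_{w}$ completely explicit and then read both inequalities off the resulting formulas for the two norms. Write $\pi^{-1}\{w\} = \{w_{1}, \ldots, w_{I}\}$, so that $U(w)$ is the disjoint union of the clopen sets $U(w_{i})$ and every element $f$ of $F_{w}$ has the form $f = \sum_{i=1}^{I} c_{i}\, \xi(w_{i})$ for some scalars $c_{i}$. Such an $f$ is supported on $U(w)$ and is constant, equal to $c_{i}$, on each $U(w_{i})$. Since $\mu$ has full support, each $\mu(w_{i}) > 0$, so the essential supremum is $\Vert f \Vert_{\infty} = \max_{i} \abs{c_{i}}$, while orthogonality of the $\xi(w_{i})$ in $L^{2}(X,\mu)$ gives $\Vert f \Vert_{2}^{2} = \sum_{i=1}^{I} \abs{c_{i}}^{2} \mu(w_{i})$. (The defining condition $f \in C_{n}^{\perp}$, which amounts to $\sum_{i} c_{i}\mu(w_{i}) = 0$, will not actually be needed; both inequalities hold for any element of $\Span\{\xi(w') \mid \pi(w')=w\}$.)

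For the first inequality I would simply bound $\Vert f \Vert_{2}^{2} = \int_{U(w)} \abs{f}^{2}\, d\mu \leq \Vert f \Vert_{\infty}^{2}\, \mu(U(w)) = \mu(w)\Vert f \Vert_{\infty}^{2}$, using that $f$ vanishes off $U(w)$; taking square roots gives $\Vert f \Vert_{2} \leq \mu(w)^{1/2}\Vert f \Vert_{\infty}$.

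For the second inequality, I would choose an index $j$ with $\abs{c_{j}} = \max_{i}\abs{c_{i}} = \Vert f \Vert_{\infty}$. Discarding all but the $j$-th summand in the formula for $\Vert f \Vert_{2}^{2}$ yields $\Vert f \Vert_{2}^{2} \geq \abs{c_{j}}^{2}\mu(w_{j}) = \Vert f \Vert_{\infty}^{2}\,\mu(w_{j})$. By the very definition of $R(w)$ we have $R(w) \geq \mu(w)/\mu(w_{j})$, i.e.\ $\mu(w_{j}) \geq \mu(w)/R(w)$, and combining these gives $\Vert f \Vert_{2}^{2} \geq R(w)^{-1}\mu(w)\Vert f \Vert_{\infty}^{2}$, which rearranges to the claimed $\mu(w)^{1/2}\Vert f \Vert_{\infty} \leq R(w)^{1/2}\Vert f \Vert_{2}$.

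The argument is elementary once the two norm formulas are in place, so there is no serious obstacle beyond bookkeeping. The only point requiring genuine care is the identification $\Vert f \Vert_{\infty} = \max_{i}\abs{c_{i}}$, which is precisely where the full-support hypothesis on $\mu$ enters: it guarantees that every coefficient $c_{i}$ is seen by the essential supremum (and, in the second inequality, that the retained term $\mu(w_{j})$ is nonzero so that the comparison with $R(w)$ is meaningful).
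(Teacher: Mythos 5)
Your proof is correct and follows essentially the same route as the paper's: both bound $\Vert f \Vert_{2}^{2}$ above by $\mu(w)\Vert f \Vert_{\infty}^{2}$ for the first inequality, and for the second retain the single term at an index where $\abs{c_{j}} = \Vert f \Vert_{\infty}$ and invoke the definition of $R(w)$ to get $\mu(w_{j}) \geq \mu(w)R(w)^{-1}$. Your added remarks---that the orthogonality constraint $f \in C_{n}^{\perp}$ is never used and that full support of $\mu$ is what justifies $\Vert f \Vert_{\infty} = \max_{i}\abs{c_{i}}$---are accurate observations the paper leaves implicit.
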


\begin{proof}
The first inequality is trivial. For the second, since $f$ is in $F_{w}$, $f= \sum_{\pi(w')=w} a_{w'} \xi(w')$
and so $ \Vert f \Vert_{\infty} = | a_{w'} |$, for some $w'$ with $\pi(w') = w$. Then we have 
\[
 \Vert f \Vert_{2}^{2} = \int_{U(w)} f^{2} d\mu \geq |a_{w'}|^{2} \mu(w') =  \mu(w') \Vert f \Vert_{\infty}^{2}
 \geq \mu(w) R(w)^{-1} \Vert f \Vert_{\infty}^{2}.
\]
Multiplying by $R(w)$ and taking square roots yields the result.

\end{proof}

We are next going to introduce a special collection of functions. At the simplest level they provide 
spanning sets for our subspaces $F_{w}$. More importantly and subtly, their interactions with the operator $D$
are quite simple. 

Suppose that $\pi(w)$ is in $X^{+}_{n}$, let 
\[
 \eta(w) = \frac{\mu(\pi(w))}{ \mu(w)}  \xi(w) - \xi(\pi(w) ).
\]
Notice that we could make the same definition for any $w$, but if $\pi(w)$ is not in 
$X^{+}_{n}$, for some $n$, then $\eta(w)=0$. Also, if $w$ is in $X^{+}_{n}$, the
vectors  $\{ \eta(w') \mid \pi(w')=w \}$ span $F_{w}$, but are linearly dependent since their
sum is zero. 
Also observe that for any $w$ in $X_{n}$, the sequence of vectors $\eta(\pi_{m+1}(w))$, for those $m < n$ with
$\pi_{m}(w)$ in $X_{m}^{+}$ are an orthogonal set.

We summarize some properties of  $\eta(w)$.

\begin{lemma}
\label{lemma:eta}
 Suppose that $\pi(w)$  is  in $X^{+}_{n}$.
\begin{enumerate}
 \item $\eta(w)$ is in $C_{n+1}$.
\item $\eta(w)$ is in $C_{n}^{\perp}$.
\item $\eta(w)^{2} = \left( \frac{\mu(\pi(w)) }{ \mu(w)} - 2 \right)\eta(w) 
 +  \left( \frac{\mu(\pi(w)) }{ \mu(w)} - 1 \right) \xi(\pi(w)) $.
\item $\Vert \eta(w) \Vert_{2}^{2} = \mu(\pi(w)) \left( \frac{\mu(\pi(w)) }{ \mu(w)} - 1 \right)$.
\item If we consider $\eta(w)$ in $C(X)$ and $\eta(w) + \C$ in the quotient  space $C(X)/ \C$, then
$\Vert \eta(w) + \C \Vert_{\infty} =   \frac{\mu(\pi(w)) }{2 \mu(w)}$.
\end{enumerate}

\end{lemma}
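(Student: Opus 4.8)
The plan is to reduce all five assertions to the idempotent structure of the two indicator functions appearing in the definition. Throughout I write $c := \mu(\pi(w))/\mu(w)$, so that $\eta(w) = c\,\xi(w) - \xi(\pi(w))$, and I record at the outset the pointwise identities forced by $U(w) \subseteq U(\pi(w))$, namely $\xi(w)^{2} = \xi(w)$, $\xi(\pi(w))^{2} = \xi(\pi(w))$ and $\xi(w)\,\xi(\pi(w)) = \xi(w)$. Since $\pi(w) \in X^{+}_{n}$, the observation recorded just before the definition of $X^{+}_{n}$ gives the strict inclusion $U(w) \subsetneq U(\pi(w))$, whence $\mu(w) < \mu(\pi(w))$ and $c > 1$. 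This sign information is what the last part will hinge on.

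Parts (1) and (2) I would settle by membership and projection. For (1), $w \in X_{n+1}$ gives $\xi(w) \in C_{n+1}$, while $\xi(\pi(w)) \in C_{n} \subseteq C_{n+1}$, so $\eta(w) \in C_{n+1}$. For (2) the clean route is to compute the orthogonal projection $P_{n}$ directly: because $C_{n}$ consists of the functions constant on each cylinder $U(v)$, $v \in X_{n}$, and these cylinders partition $X$, $P_{n}$ is the associated conditional expectation. Applied to $\xi(w)$ it returns the average $\mu(w)/\mu(\pi(w)) = c^{-1}$ on $U(\pi(w))$ and $0$ on every other cylinder, so $P_{n}\xi(w) = c^{-1}\xi(\pi(w))$; combined with $P_{n}\xi(\pi(w)) = \xi(\pi(w))$ this yields $P_{n}\eta(w) = c\cdot c^{-1}\xi(\pi(w)) - \xi(\pi(w)) = 0$, which is (2).

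Part (3) is a direct expansion: squaring $\eta(w) = c\,\xi(w) - \xi(\pi(w))$ and applying the three idempotent identities collapses the cross term to give $\eta(w)^{2} = (c^{2}-2c)\,\xi(w) + \xi(\pi(w))$, and one checks in one line that the proposed right-hand side $(c-2)\eta(w) + (c-1)\xi(\pi(w))$ expands to the same expression. Part (4) then follows by integrating (3): since $1 \in C_{0} \subseteq C_{n}$, part (2) gives $\int \eta(w)\,d\mu = \langle \eta(w),1\rangle = 0$, so only the $\xi(\pi(w))$ term survives and $\Vert \eta(w)\Vert_{2}^{2} = (c-1)\,\mu(\pi(w))$. (Equivalently one integrates the expanded square directly, using $c\,\mu(w) = \mu(\pi(w))$.)

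The only genuinely non-mechanical step is (5), which I expect to be the main obstacle, though it is more a matter of careful bookkeeping than of difficulty. I would first identify $\eta(w)$ as a continuous function taking finitely many values: it equals $c-1$ on $U(w)$, equals $-1$ on $U(\pi(w)) \setminus U(w)$ (nonempty precisely because $\pi(w)$ is special), and $0$ off $U(\pi(w))$. The quotient norm is $\Vert \eta(w) + \C\Vert_{\infty} = \inf_{\lambda \in \C}\Vert \eta(w) - \lambda\Vert_{\infty}$, and since $\eta(w)$ is real-valued the infimum is attained at a real $\lambda$ (as $|\eta(w)-\lambda| \geq |\eta(w)-\mathrm{Re}\,\lambda|$ pointwise). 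For a real function with maximum $M$ and minimum $m$ this infimum equals $\tfrac{1}{2}(M-m)$, achieved at $\lambda = \tfrac{1}{2}(M+m)$. Invoking $c>1$ from the first paragraph, the maximum value is $M = c-1$ and the minimum is $m = -1$ (the intermediate value $0$ never being an extremum), so the quotient norm equals $\tfrac{1}{2}\bigl((c-1)-(-1)\bigr) = c/2 = \mu(\pi(w))/(2\mu(w))$, as claimed. The single point requiring care is exactly this sign bookkeeping guaranteeing $M=c-1$ and $m=-1$, which is where the strictness $U(w)\subsetneq U(\pi(w))$ is used.
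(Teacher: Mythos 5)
Your proposal is correct and follows essentially the same route as the paper: the same idempotent algebra for part (3), the same three-valued-function analysis with the midpoint $\lambda=\tfrac{1}{2}(M+m)$ for part (5), and your conditional-expectation computation of $P_{n}\eta(w)=0$ is just a repackaging of the paper's inner-product check $\langle \eta(w),\xi(\pi(w))\rangle = 0$. The only cosmetic difference is part (4), where you integrate the identity of part (3) against $1$ using $\langle\eta(w),1\rangle=0$, while the paper applies Pythagoras to the orthogonal decomposition $\frac{\mu(\pi(w))}{\mu(w)}\xi(w)=\eta(w)+\xi(\pi(w))$; both are one-line arguments of the same nature.
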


\begin{proof}
 The first part is clear. For the second, $\eta(w)$ is clearly supported on $\xi(\pi(w))$ so it 
 is perpendicular to $\xi(w')$, for all $w' \neq \pi(w)$ in $X_{n}$. In addition, we compute 
 \[
  \langle \eta(w), \xi(\pi(w)) \rangle =  \frac{\mu(\pi(w)) }{ \mu(w)}  \mu(w) - \mu(\pi(w)) = 0.
 \]
For the third part, use the fact that $\xi(w)$ and $\xi(\pi(w))$ are idempotents and their 
product is the former, so that
\[\begin{split}
  \eta(w)^{2} & =  \left( \frac{\mu(\pi(w))^{2}}{\mu(w)^{2}} - 2 \frac{\mu(\pi(w))}{\mu(w)} \right) \xi(w) + \xi(\pi(w)) \\
              & =  \left( \frac{\mu(\pi(w)) }{ \mu(w)} - 2 \right)  \frac{\mu(\pi(w))}{\mu(w)}  \xi(w) + \xi(\pi(w)) \\
              & =  \left( \frac{\mu(\pi(w)) }{ \mu(w)} - 2 \right) (\eta(w) + \xi(\pi(w))) + \xi(\pi(w)) \\
              & =  \left( \frac{\mu(\pi(w)) }{ \mu(w)} - 2 \right) \eta(w)  +  \left( \frac{\mu(\pi(w)) }{ \mu(w)} - 1 \right) \xi(\pi(w)).
\end{split}\]

For the fourth part, we know already that $ \frac{\mu(\pi(w)) }{ \mu(w)} \xi(w) = \eta(w) + \xi(\pi(w))$. 
Moreover, the two terms on the right are orthogonal, so we have 
\[
 \left( \frac{\mu(\pi(w)) }{ \mu(w)}\right)^{2} \mu(w) = \Vert \eta(w) \Vert^{2}_{2} + \mu(\pi(w)),
\]
and the conclusion follows.

For the last part, the function $\eta(w)$ takes on three values: $0$, 
$a = \frac{\mu(\pi(w)) }{ \mu(w)} - 1  > 0$ (on $U(w)$) and 
 $b = -1 < 0$.
 For any such function, the minimum norm on $\eta(w) + \C$ is obtained 
 at $\eta(w) - \frac{a+b}{2}$ and equals $a - \frac{a+b}{2} = \frac{a -b}{2}$. The rest is a 
 simple computation.
\end{proof}

We next want to summarize some of the properties of the functions $\eta(w)$ as operators. In 
particular, their commutators with $D$ are of a simple form.

\begin{lemma}
\label{lemma:zeta}
 Suppose that $w$ is in $X_{n+1}$ with $\pi(w) $ in $X^{+}_{n}$. Considering $\eta(w)$ as an element of $C(X)$, we have 
the following. 
\begin{enumerate}
 \item $\eta(w) C_{n} \subset \C \eta(w) \subset C_{n+1} \cap C_{n}^{\perp}$.
\item $[\eta(w), D] = \eta(w) \otimes \zeta(w)^{*} - \zeta(w) \otimes \eta(w)^{*}$,
where \newline $\zeta(w) = \mu(\pi(w))^{-1}(D - \alpha_{n+1})\xi(\pi(w))$.
\item $\Vert [ \eta(w), D ] \Vert = \Vert \eta(w) \Vert_{2} \Vert \zeta(w) \Vert_{2}$.
\item $\zeta(w)$ is in $C_{n}$ and is non-zero.
\item $\Vert [ \eta(w), D ] \Vert \leq  \left( \frac{\mu(\pi(w)) }{ \mu(w)} \right)^{1/2} \alpha_{n+1}  $.
\end{enumerate}

\end{lemma}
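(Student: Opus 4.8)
The plan is to prove the five parts in the order 1, 4, 2, 3, 5, since parts 3 and 5 depend on the commutator formula in part 2, and part 2 in turn rests on part 1. Parts 1 and 4 are immediate. For part 1, observe that $\eta(w)$ is supported on $U(\pi(w))$ (both $\xi(w)$ and $\xi(\pi(w))$ vanish off $U(\pi(w))$), while every $f \in C_n$ is constant on $U(\pi(w))$ because $\pi(w) \in X_n$; hence $\eta(w) f = (f|_{U(\pi(w))})\,\eta(w) \in \C\,\eta(w)$, and the inclusion $\C\,\eta(w) \subseteq C_{n+1}\cap C_n^\perp$ is exactly parts 1 and 2 of Lemma~\ref{lemma:eta}. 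For part 4, note that $D$ maps each graded piece $C_k \cap C_{k-1}^\perp$ to itself and hence preserves $C_n$; since $\xi(\pi(w)) \in C_n$, we get $\zeta(w) \in C_n$. It is nonzero because its $C_0$-component is $(\alpha_0 - \alpha_{n+1})\,\mathbf{1} \ne 0$, as $\alpha$ is strictly increasing.

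The heart of the matter is part 2, and the plan is to verify the operator identity on the orthogonal decomposition $L^2(X,\mu) = C_n \oplus (C_{n+1}\cap C_n^\perp) \oplus C_{n+1}^\perp$. First I would record the elementary fact that multiplication by a real $f \in C_m$ preserves both $C_m$ and $C_m^\perp$ (by self-adjointness: $\langle f\psi,\phi\rangle = \langle\psi,f\phi\rangle$ with $f\phi\in C_m$). Applying this to $\eta(w)\in C_{n+1}$ shows that on $C_{n+1}^\perp = \bigoplus_{m\ge n+2}(C_m\cap C_{m-1}^\perp)$ the operator $\eta(w)$ preserves each graded piece and therefore commutes with $D$; the right-hand side also vanishes there, since $\eta(w)$ and $\zeta(w)$ both lie in $C_{n+1}$ and are thus orthogonal to $C_{n+1}^\perp$. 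On $C_n$, part 1 gives $\eta(w)\psi = \mu(\pi(w))^{-1}\langle\xi(\pi(w)),\psi\rangle\,\eta(w)$, and a direct computation of $\eta(w)D\psi - D\eta(w)\psi$ collapses to $\mu(\pi(w))^{-1}\langle (D-\alpha_{n+1})\xi(\pi(w)),\psi\rangle\,\eta(w) = \langle\zeta(w),\psi\rangle\,\eta(w)$, matching the right-hand side since $\langle\eta(w),\psi\rangle=0$ there. The remaining, and most delicate, case is $\psi \in C_{n+1}\cap C_n^\perp$: here $D\psi = \alpha_{n+1}\psi$, so $[\eta(w),D]\psi = (\alpha_{n+1}-D)\,\eta(w)\psi = (\alpha_{n+1}-D)\,P_n(\eta(w)\psi)$, and the key step is to identify the off-diagonal block $P_n(\eta(w)\psi) = \mu(\pi(w))^{-1}\langle\eta(w),\psi\rangle\,\xi(\pi(w))$, which I would obtain by testing against $\phi\in C_n$ and using part 1 together with self-adjointness of $\eta(w)$. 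This yields $[\eta(w),D]\psi = -\langle\eta(w),\psi\rangle\,\zeta(w)$, again matching the right-hand side (whose $\eta(w)\otimes\zeta(w)^*$ term drops out as $\zeta(w)\perp\psi$). Identifying this block is the main obstacle, since it is the only place where the full multiplication operator $\eta(w)$, rather than merely its restriction to $C_n$, enters.

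With part 2 in hand, parts 3 and 5 are short, the decisive observation being that $\eta(w) \in C_{n+1}\cap C_n^\perp$ and $\zeta(w) \in C_n$ are orthogonal. For part 3, a skew-symmetric rank-two operator $\eta\otimes\zeta^* - \zeta\otimes\eta^*$ with $\eta\perp\zeta$ annihilates $\mathrm{span}\{\eta,\zeta\}^\perp$ and, in the orthonormal basis $\{\eta/\Vert\eta\Vert_2,\ \zeta/\Vert\zeta\Vert_2\}$, has matrix $\Vert\eta\Vert_2\Vert\zeta\Vert_2\left(\begin{smallmatrix}0&1\\-1&0\end{smallmatrix}\right)$, whose norm is $\Vert\eta\Vert_2\Vert\zeta\Vert_2$; this gives $\Vert[\eta(w),D]\Vert = \Vert\eta(w)\Vert_2\Vert\zeta(w)\Vert_2$. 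For part 5, I would bound $\Vert\zeta(w)\Vert_2$ crudely: writing $\xi(\pi(w)) = \sum_{k\le n}\phi_k$ in graded pieces, Parseval together with $0 < \alpha_k < \alpha_{n+1}$ for $k \le n$ gives $\Vert(D-\alpha_{n+1})\xi(\pi(w))\Vert_2 \le \alpha_{n+1}\Vert\xi(\pi(w))\Vert_2 = \alpha_{n+1}\mu(\pi(w))^{1/2}$, so $\Vert\zeta(w)\Vert_2 \le \alpha_{n+1}\mu(\pi(w))^{-1/2}$. Combining with $\Vert\eta(w)\Vert_2 = \big(\mu(\pi(w))(\tfrac{\mu(\pi(w))}{\mu(w)}-1)\big)^{1/2}$ from Lemma~\ref{lemma:eta}(4) yields $\Vert[\eta(w),D]\Vert \le (\tfrac{\mu(\pi(w))}{\mu(w)}-1)^{1/2}\alpha_{n+1} \le (\tfrac{\mu(\pi(w))}{\mu(w)})^{1/2}\alpha_{n+1}$, as claimed.
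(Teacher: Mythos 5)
Your proof is correct, and it follows the paper's global strategy---analyzing $[\eta(w),D]$ through the decomposition $L^{2}(X,\mu) = C_{n} \oplus (C_{n+1}\cap C_{n}^{\perp}) \oplus C_{n+1}^{\perp}$---but the mechanics of the key step differ. The paper first shows the commutator vanishes on all three diagonal blocks, then uses skew-adjointness together with part 1 to conclude abstractly that it must have the shape $\eta(w)\otimes\zeta^{*} - \zeta\otimes\eta^{*}$ for some $\zeta \in C_{n}$, and finally identifies $\zeta(w)$ by applying the commutator to the vector $\eta(w)$ itself, a computation that relies on the multiplicative identity for $\eta(w)^{2}$ in part 3 of Lemma~\ref{lemma:eta}. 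You instead verify the stated formula blockwise, and your identification of the off-diagonal block $P_{n}(\eta(w)\psi) = \mu(\pi(w))^{-1}\langle \eta(w),\psi\rangle\, \xi(\pi(w))$ for $\psi \in C_{n+1}\cap C_{n}^{\perp}$, obtained by testing against $C_{n}$ and using self-adjointness of multiplication by the real function $\eta(w)$, replaces both the skew-adjointness argument and the $\eta(w)^{2}$ identity entirely. What the paper's derivation buys is an explanation of where $\zeta(w)$ comes from, with no formula needing to be guessed in advance; what yours buys is a self-contained and fully explicit check that bypasses Lemma~\ref{lemma:eta}(3). The remaining parts contain small, harmless variations: in part 3 you compute the $2\times 2$ matrix of the skew rank-two operator in the orthonormal basis spanned by $\eta(w)$ and $\zeta(w)$ (their orthogonality being exactly $\zeta(w)\in C_{n}$, $\eta(w)\in C_{n}^{\perp}$), where the paper argues that the two terms map $C_{n}\to C_{n}^{\perp}$ and $C_{n}^{\perp}\to C_{n}$ respectively; in part 4 you exhibit the nonzero $C_{0}$-component $(\alpha_{0}-\alpha_{n+1})\mathbf{1}$ of $\zeta(w)$, where the paper notes that $D-\alpha_{n+1}$ is invertible on $C_{n}$ since $\alpha_{n+1} > \alpha_{k}$ for $k \leq n$; and your part 5 passes through the marginally sharper intermediate bound $\left(\frac{\mu(\pi(w))}{\mu(w)}-1\right)^{1/2}\alpha_{n+1}$ before relaxing to the stated estimate, which matches the paper's chain of inequalities.
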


\begin{proof}
 For the first point, the support of $\eta(w)$ is contained in $U(\pi(w))$.
 So for any $w'$ in $X_{n}$,  $\eta(w) \xi(w') = 0$ unless $w' = \pi(w)$,
 in which case  $\eta(w) \xi(w') =  \eta(w) \in C_{n+1} \cap C_n^\perp$.

For the second point, we know that $D C_{n} \subset C_{n}$ and $D(C_{n+1} \cap C_{n}^{\perp})$ is a subspace of $(C_{n+1} \cap C_{n}^{\perp})$. 
Combining this with the first part above, we see that 
$D \eta(w)$ and $\eta(w)D$ both map $C_{n}$ into $(C_{n+1} \cap C_{n}^{\perp})$.
We also know that $\restr{[\eta(w), D]}{C_{n+1}^{\perp}} = 0$. In other words, $[\eta(w),D](I-P_{n+1})=0$.
From above, we have $P_{n}[\eta(w), D]P_{n} = 0$. In addition, we compute
\begin{align*}
( & P_{n+1} - P_{n}) [\eta(w), D] ( P_{n+1} - P_{n}) \\
  &       \qquad =  ( P_{n+1} - P_{n}) (\eta(w) D - D \eta(w)) ( P_{n+1} - P_{n}) \\
  &       \qquad = ( P_{n+1} - P_{n}) \eta(w) D  ( P_{n+1} - P_{n}) -  ( P_{n+1} - P_{n}) D \eta(w)  ( P_{n+1} - P_{n}) \\
  &       \qquad =  ( P_{n+1} - P_{n}) \eta(w) \alpha_{n+1} ( P_{n+1} - P_{n}) \\*
  &   \qquad \quad -  ( P_{n+1} - P_{n}) \alpha_{n+1} \eta(w)  ( P_{n+1} - P_{n}) =0 
\end{align*}
From all this---writing $I=P_n + (P_{n+1}-P_n) + (I-P_{n+1})$---, we have 
\[ 
[\eta(w), D] = (P_{n+1}-P_{n}) [\eta(w), D] P_{n} + P_{n} [\eta(w), D] (P_{n+1}-P_{n}) 
\]
Since $[\eta(w), D]$ is skew adjoint, the second term is the opposite of the first. From part 1, the 
first is a rank one operator of the form $\eta(w) \otimes \zeta(w)^{*}$, for some 
$\zeta(w)$ in $C_{n}$.

To find $\zeta(w)$, the simplest thing is to apply $[\eta(w), D]$ to $\eta(w)$ so that
\begin{align*}
 &\Vert \eta(w)\Vert_{2}^{2} \zeta(w)
          =  - [\eta(w), D](\eta(w)) \\
 & \qquad =  - \eta(w) D(\eta(w)) - D(\eta(w)^{2}) \\
 & \qquad =  - \alpha_{n+1} \eta(w)^{2} - D(\eta(w)^{2}) \\
 & \qquad =  - (\alpha_{n+1} - D) (\eta(w)^{2}) \\
 & \qquad =  - (\alpha_{n+1} - D)  
    \left[  \left( \frac{\mu(\pi(w)) }{ \mu(w)} - 2 \right)\eta(w) 
 +  \left( \frac{\mu(\pi(w)) }{ \mu(w)} - 1 \right) \xi(\pi(w))  \right] \\
 & \qquad =    0 + \left( \frac{\mu(\pi(w)) }{ \mu(w)} - 1 \right)  (D - \alpha_{n+1}) \xi(\pi(w)). 
\end{align*}
Now using the fact that $\Vert \eta(w)\Vert_{2}^{2} = \left( \frac{\mu(\pi(w)) }{ \mu(w)} - 1 \right)  \mu(\pi(w))$, we have
\[\begin{split}
 \zeta(w)   & =  \Vert \eta(w)\Vert_2^{-2} \left( \frac{\mu(\pi(w))}{\mu(w)} - 1 \right)  (D - \alpha_{n+1}) \xi(\pi(w)) \\
            & =  \mu(\pi(w))^{-1} (D - \alpha_{n+1}) \xi(\pi(w)).
\end{split}\]

For the third part, having
expressed  $[\eta(w), D] = \eta(w) \otimes \zeta(w)^{*} - \zeta(w) \otimes \eta(w)^{*}$,
we note that the first term maps $C_{n}$ to $C_{n}^{\perp}$ and the second maps
$C_{n}^{\perp}$ to $C_{n}$. Hence the norm is equal to the supremum of the norms of
the two terms which is simply $\Vert \eta(w) \Vert_{2} \Vert \zeta(w) \Vert_{2}$.

The first statement of the fourth part follows from the fact that $\xi(\pi(w))$ is in $C_{n}$, which is invariant under $D$,
and the formula for $\zeta(w)$. The second part follows because $D- \alpha_{n+1}$ is invertible  on $C_{n}$ as
$\alpha_{n+1} > \alpha_{1}, \alpha_{2}, \ldots, \alpha_{n}$.

For the last part, we have
\begin{align*}
 & \Vert [\eta(w), D] \Vert = \Vert \eta(w) \Vert_2 \Vert \zeta(w) \Vert_2  \\
    & \quad = \left( \frac{\mu(\pi(w))}{\mu(w)} - 1 \right)^{1/2} \mu(\pi(w))^{1/2}
                                 \mu(\pi(w))^{-1} \Vert (D- \alpha_{n+1}) \xi(\pi(w)) \Vert_{2}  \\
    &  \quad \leq \left( \frac{\mu(\pi(w))}{\mu(w)}\right)^{1/2}
                                  \mu(\pi(w))^{-1/2} \Vert (D - \alpha_{n+1}) \xi(\pi(w)) \Vert_{2}.
\end{align*}
It remains only to note that $\mu(\pi(w))^{1/2} = \Vert \xi(\pi(w)) \Vert$ and that $\xi(\pi(w))$ is a vector in
$C_{n}$, with  $ 0 \leq D|_{C_{n}} \leq \alpha_{n+1}$ so that $\Vert D - \alpha_{n+1} \Vert \leq \alpha_{n+1}$.

\end{proof}

A nice simple consequence of this result is that the only functions in $C_{\infty}$ commuting with $D$ are the scalars.
We introduce now the notation $Q_{w}$ for the orthogonal projection of $L^{2}(X, \mu)$ onto $F_{w}$.
It will be convenient to break a function $f \in L^2(X, \mu)$ into its components $Q_w(f)$.
Since the commutator of $D$ with the functions $\eta(w')$ has a fairly simple form and $Q_w$ is the projection onto the span of the
of $\eta(w') $ for $\pi(w') = w$, we can hope that the quantities $[D,Q_w(f)]$ are accessible to computations.
Note that  $Q_{w}(f) = \xi(w)(P_{n+1}(f) - P_{n}(f))$, for any $f$ in 
$L^{2}(X, \mu)$.

\begin{thm}
 \label{thm:commutator-of-D}
 Let $X$ be a subshift and assume  the sequence $\alpha_{n}$ is strictly increasing. 
 If $f$ is in $C_{\infty}(X)$ and $[f, D] = 0$, then $f$ is in $C_{0} = \C 1$.
\end{thm}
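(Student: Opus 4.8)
The plan is to show that if $f \in C_\infty(X)$ commutes with $D$, then every ``layer'' $Q_w(f)$ of $f$ must vanish, leaving $f$ constant. First I would write $f = \sum_{n \geq 0} f_n$ with $f_n \in C_n \cap C_{n-1}^\perp$; since $f$ lies in $C_\infty(X)$, this sum is finite, so there is a largest index $N$ with $f_N \neq 0$. It suffices to show $N = 0$, for then $f \in C_0 = \C 1$. To reach a contradiction, suppose $N \geq 1$.

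The key is to decompose $f_N$ using the projections $Q_w$. Since $f_N \in C_N \cap C_{N-1}^\perp$ and $Q_w(f) = \xi(w)(P_N(f) - P_{N-1}(f))$ for $w \in X_{N-1}^+$, we have $f_N = \sum_{w \in X_{N-1}^+} Q_w(f)$, and each $Q_w(f) \in F_w$ is a linear combination of the functions $\eta(w')$ with $\pi(w') = w$. Because $f_N \neq 0$, there is at least one $w \in X_{N-1}^+$ with $Q_w(f) \neq 0$. I would then compute $[f, D]$ and isolate the part that maps $C_{N-1}$ into $C_N \cap C_{N-1}^\perp$. Using Lemma~\ref{lemma:zeta}, each commutator $[\eta(w'), D]$ has the explicit rank-two form $\eta(w') \otimes \zeta(w')^* - \zeta(w') \otimes \eta(w')^*$ with $\zeta(w') = \mu(\pi(w'))^{-1}(D - \alpha_N)\xi(w)$ nonzero in $C_{N-1}$. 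The crucial point is that $\zeta(w')$ depends on $w'$ only through $w = \pi(w')$, so for a fixed $w$ all these $\zeta(w')$ are parallel; applying $[f,D]$ to the vector $\zeta(w)$ (or to $\xi(w)$) should produce a nonzero element of $C_N \cap C_{N-1}^\perp$ proportional to $\sum_{\pi(w')=w} a_{w'} \eta(w')$, which is exactly the (nonzero) component $Q_w(f)$ up to a nonzero scalar.

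The mechanism forcing non-vanishing is that $\alpha_N > \alpha_{N-1} \geq \cdots \geq \alpha_0$, so $D - \alpha_N$ is invertible on $C_{N-1}$ (this is precisely part~4 of Lemma~\ref{lemma:zeta}, which requires strict monotonicity of the $\alpha_n$). Concretely, I would evaluate
\[
 [f, D]\,\xi(w) = \Big(\sum_{\pi(w')=w} a_{w'}\,\eta(w')\Big) \cdot \big(\text{nonzero scalar from }(D-\alpha_N)\xi(w)\big),
\]
and observe the right-hand side is a nonzero vector in $C_N \cap C_{N-1}^\perp$ precisely when $Q_w(f) \neq 0$, contradicting $[f,D] = 0$. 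The main obstacle is bookkeeping the direction in which the rank-one pieces act: one must separate the summand of $[f,D]$ taking $C_{N-1}$ into $C_N \cap C_{N-1}^\perp$ from its adjoint (which goes the other way), and confirm that contributions from the lower layers $f_0, \ldots, f_{N-1}$ cannot cancel the top layer's output, since those commutators live in strictly lower spaces $C_m \cap C_{m-1}^\perp$ for $m < N$. Once the $N$-th layer's contribution is isolated and shown nonzero, the contradiction is immediate, giving $N = 0$ and hence $f \in \C 1$.
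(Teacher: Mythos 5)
Your overall strategy---peel off the top layer $f_N$, invoke the rank-two formula of Lemma~\ref{lemma:zeta}, and use that $\zeta(w')$ depends only on $\pi(w')$ and is nonzero (part~4, where strict monotonicity of $\alpha_n$ enters)---is exactly the paper's strategy, run in the adjoint direction: the paper applies $[f,D]$ to $\eta(w_1)$ with $\pi(w_1)=w_0 \in X_{N-1}^+$ and reads off the $C_{N-1}$-component, whereas you apply $[f,D]$ to a vector of $C_{N-1}$ and read off the $C_N \cap C_{N-1}^\perp$-component. Your maximal-index contradiction versus the paper's downward induction is cosmetic, and your observation that the lower layers $f_0,\dots,f_{N-1}$ contribute nothing to the block from $C_{N-1}$ into $C_N \cap C_{N-1}^\perp$ is correct. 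However, your displayed identity is false as stated, and this is a genuine gap. Writing $f_N = \sum_{\tilde{w} \in X_{N-1}^+} Q_{\tilde{w}}(f)$, the top component of $[f,D]\,\xi(w)$ is
\[
 \sum_{\tilde{w} \in X_{N-1}^{+}} \big\langle \xi(w), \zeta_{\tilde{w}} \big\rangle\, Q_{\tilde{w}}(f),
 \qquad \zeta_{\tilde{w}} := \mu(\tilde{w})^{-1}(D-\alpha_N)\xi(\tilde{w}),
\]
and not a scalar multiple of $Q_w(f)$ alone: the cross coefficients $\langle \xi(w), \zeta_{\tilde{w}} \rangle = \mu(\tilde{w})^{-1}\langle \xi(w), D\,\xi(\tilde{w})\rangle$ for $\tilde{w} \neq w$ are in general nonzero, because $D$ preserves only the filtration $(C_m)$, not the cylinder basis. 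So the vanishing of $[f,D]\,\xi(w)$ does not immediately isolate $Q_w(f)$, and the ``immediate'' contradiction does not follow as written.

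The repair is short but must be said: for distinct $\tilde{w} \in X_{N-1}^+$ the functions $\eta(w')$ with $\pi(w')=\tilde{w}$ are supported on the pairwise disjoint sets $U(\tilde{w})$, so $C_N \cap C_{N-1}^\perp$ splits orthogonally as $\bigoplus_{\tilde{w}} F_{\tilde{w}}$, and the vanishing of the sum above forces $\langle \xi(w), \zeta_{\tilde{w}} \rangle\, Q_{\tilde{w}}(f) = 0$ for each $\tilde{w}$ separately. (The paper uses this same disjointness explicitly in the second step of its proof.) You then still owe the nonvanishing of the diagonal scalar: part~4 of Lemma~\ref{lemma:zeta} gives $\zeta_w \neq 0$, but not that a particular pairing with $\zeta_w$ is nonzero. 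Your alternative test vector $\zeta_w$ itself fixes this cleanly, since the coefficient becomes $\Vert \zeta_w \Vert_2^2 > 0$; if you insist on $\xi(w)$, note instead that $D \leq \alpha_{N-1} < \alpha_N$ on $C_{N-1}$, so $\langle \xi(w), (D-\alpha_N)\xi(w)\rangle < 0$. With these two points supplied, your conclusion $Q_w(f)=0$ for all $w \in X_{N-1}^+$, hence $f_N=0$, goes through; the paper sidesteps the scalar issue entirely by pairing against $\eta(w_1)$, where only $\zeta_{w_0} \neq 0$ is needed, and then varying $w_1$ over $\pi^{-1}\{w_0\}$ to span $F_{w_0}$.
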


\begin{proof}
 Begin by adding a scalar to $f$ so that $f $ is in $C_{0}^{\perp}$ which clearly alters neither hypothesis nor
 conclusion. Suppose that $f$ is in $C_{n}$. We will prove by induction on $n$ that $f=0$. The case
 $n=0$ is straightforward, since $f$ lies in both $C_{0}$ and $C_{0}^{\perp}$.
 Now assume $n \geq 1$.
 We may write
 \[
  f = \sum_{0 \leq m < n} \sum_{ \pi(w) \in X_{m}^{+}} \beta(w) \eta(w),
 \]
 where the $\beta(w)$ are scalars. 
 
 Fix $w_{0}$ in $X_{n-1}^{+}$ and let  $w_{1} $ be in $X_{n}$ with $\pi(w_{1}) = w_{0}$. 
 We will compute $[f, D] \eta(w_{1})$, 
 which is zero by hypothesis. This will be done term by term in the sum above for  $f$, using
 part~2 of Lemma~\ref{lemma:zeta}.
 First, suppose that $w$ is in $X_{m}$ with $m < n$. In this case, both $\eta(w)$ and $\zeta(w)$ lie in $C_{n-1}$
 and hence their inner products with $\eta(w_{1})$ are zero so this term contributes nothing.
 
 Secondly, if $w$ is in $X_{n}$ with $\pi(w) \neq w_{0}$, then $\eta(w)$ is 
 supported on $U(\pi(w))$ while $\eta(w_{0})$ is supported on $U(w_{0})$. As these sets are 
 disjoint, the vectors are orthogonal. Since $\zeta(w)$ is in $C_{n-1}$, it is also orthogonal
 to $\eta(w_{0})$. 
 
 This leaves us to consider $w$ with $\pi(w) = w_{0}$. For each such $w$, $\zeta(w)$ is again in
 $C_{n-1}$ and is orthogonal to $\eta(w_{1})$. We conclude that 
 \[
  [f, D] \eta(w_{1}) = \sum_{\pi(w) = w_{0}} \beta(w) \langle \eta(w_{1}), \eta(w) \rangle \zeta(w).
 \]
Observe from the formula in part 2 that $\zeta(w)$ depends only on $\pi(w)=w_{0}$ and is non-zero. We conclude that
\[
 \sum_{\pi(w) = w_{0}} \beta(w) \langle \eta(w_{1}), \eta(w) \rangle = 0.
\]
Since $\eta(w)$ and $\eta(w_{1})$ are real-valued, we also have 
\begin{align*}
 0  & = \sum_{\pi(w) = w_{0}} \beta(w) \langle \eta(w), \eta(w_{1}) \rangle
      = \Big\langle  \sum_{\pi(w) = w_{0}} \beta(w) \eta(w), \eta(w_{1}) \Big\rangle \\
    & = \langle Q_{w_{0}}(f) , \eta(w_{1}) \rangle.
\end{align*}
If we now vary $w_{1}$ over $\pi^{-1}\{ w_{0}  \}$, this forms a spanning set for
$F_{w_{0}}$. We conclude that $Q_{w_{0}}(f) = 0$. As this holds for every $w_{0}$ in
$X_{n}^{+}$, we conclude that $f$ actually lies in $C_{n-1}$. By induction hypothesis
$f=0$.
\end{proof}

It is a good time to observe the following fairly simple consequence of these estimates. We will use this
later to give examples of Sturmian systems where the Connes metric is infinite.

\begin{thm}
 \label{thm:sufficient-for-unbounded}
 Let $X$ be a subshift and let $\mu$ be a 
$\sigma$-invariant probability measure with support $X$. If the set
 \(
  \left\{ (n+1)^{-2} {\mu(\pi(w))}/{\mu(w)} \mid n \geq 1, \pi(w) \in X_{n}^{+} \right\}
 \)
is unbounded, the Connes metric associated with $(C_{\infty}(X), L^{2}(X, \mu), D_{X})$ is infinite.
\end{thm}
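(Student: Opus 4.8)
The Connes metric on the state space is defined by
\[
 \rho(\phi, \psi) = \sup \bigl\{ |\phi(f) - \psi(f)| \tq f \in C_\infty(X), \ \Vert [D_X, f] \Vert \leq 1 \bigr\}.
\]
To show this is infinite, it suffices to exhibit a sequence of functions $f_k$ in $C_\infty(X)$ with $\Vert [D_X, f_k] \Vert \leq 1$ but whose oscillation (for instance $\Vert f_k + \C \Vert_\infty$, or equivalently $\sup_{x,y} |f_k(x) - f_k(y)|$) tends to infinity; such functions separate states by an unbounded amount. The plan is to build each $f_k$ from a single $\eta(w)$, exploiting the fact that Lemma~\ref{lemma:zeta} gives both an upper bound on $\Vert [\eta(w), D] \Vert$ and Lemma~\ref{lemma:eta} gives an exact formula for the quotient norm $\Vert \eta(w) + \C \Vert_\infty$.

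First I would extract from the unboundedness hypothesis a sequence of words: there exist $n_k \to \infty$ and words $w_k$ with $\pi(w_k) \in X_{n_k}^+$ such that
\[
 (n_k + 1)^{-2} \frac{\mu(\pi(w_k))}{\mu(w_k)} \longrightarrow \infty.
\]
For each such $w_k$, part~5 of Lemma~\ref{lemma:zeta} (specialized to $D_X$, where $\alpha_{n+1} = n+1$) gives
\[
 \Vert [\eta(w_k), D_X] \Vert \leq \left( \frac{\mu(\pi(w_k))}{\mu(w_k)} \right)^{1/2} (n_k + 1).
\]
So I would set $f_k = c_k\, \eta(w_k)$ with the normalizing scalar $c_k = \Vert [\eta(w_k), D_X] \Vert^{-1}$, which guarantees $\Vert [D_X, f_k] \Vert = 1$ (note $[D_X, \eta(w_k)]$ is nonzero since $\zeta(w_k) \neq 0$ by part~4). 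The point is then to estimate the quotient-norm oscillation of $f_k$ from below. By part~5 of Lemma~\ref{lemma:eta},
\[
 \Vert f_k + \C \Vert_\infty = c_k\, \frac{\mu(\pi(w_k))}{2\mu(w_k)}
   \geq \frac{1}{2} \left( \frac{\mu(\pi(w_k))}{\mu(w_k)} \right)^{1/2} (n_k+1)^{-1},
\]
using the upper bound on $\Vert [\eta(w_k), D_X] \Vert$ to bound $c_k$ from below. The right-hand side is exactly $\tfrac12$ times the square root of $(n_k+1)^{-2}\mu(\pi(w_k))/\mu(w_k)$, which tends to infinity by the choice of $w_k$.

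The remaining step is to translate this unbounded oscillation into an unbounded Connes distance between two genuine states. I would pick points $x_k, y_k \in X$ with $f_k(x_k)$ near its max and $f_k(y_k)$ near its min on the quotient (concretely, $x_k \in U(w_k)$ where $\eta(w_k)$ takes value $\mu(\pi(w_k))/\mu(w_k) - 1$, and $y_k$ in $U(\pi(w_k)) \setminus U(w_k)$ where it takes value $-1$), and use the evaluation states $\delta_{x_k}, \delta_{y_k}$, for which $|\delta_{x_k}(f_k) - \delta_{y_k}(f_k)|$ equals the full oscillation $c_k\,\mu(\pi(w_k))/\mu(w_k)$, even larger than the quotient norm. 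Since each $f_k$ is an admissible test function, $\rho(\delta_{x_k}, \delta_{y_k}) \to \infty$, so the metric is infinite. The main obstacle I anticipate is purely bookkeeping: keeping the two-sided estimate on $c_k$ consistent so that the $(n_k+1)$ factors cancel correctly and the square-root of the hypothesized unbounded quantity survives; the conceptual work is entirely carried by the two lemmas, and no deeper argument about the metric's topology is needed for infiniteness.
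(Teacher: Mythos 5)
Your proposal is correct and follows essentially the same route as the paper, which takes the explicitly normalized functions $f_{w} = (n+1)^{-1} \left( \mu(\pi(w))/\mu(w) \right)^{-1/2} \eta(w)$, notes $\Vert [f_{w}, D] \Vert \leq 1$ by part~5 of Lemma~\ref{lemma:zeta}, and observes that $\Vert f_{w} + \C \Vert_{\infty} = (2(n+1))^{-1} \left( \mu(\pi(w))/\mu(w) \right)^{1/2}$ is unbounded by part~5 of Lemma~\ref{lemma:eta}. Your two variations---normalizing by the exact commutator norm rather than its upper bound, and spelling out the passage to evaluation states $\delta_{x_k}, \delta_{y_k}$---are correct elaborations of steps the paper leaves implicit.
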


\begin{proof}
 It follows from the last part of Lemma~\ref{lemma:zeta} that the functions 
 \[
 f_{w} = (n+1)^{-1} \left( \frac{\mu(\pi(w))}{\mu(w)} \right)^{-1/2} \eta(w)
 \]
 all satisfy  $\Vert [f_{w}, D] \Vert \leq 1$. On the other hand, from 
 part 5 of Lemma \ref{lemma:eta}, we have 
 \[
  \Vert f_{w} + \C \Vert_{\infty} = (2(n+1))^{-1} \left( \frac{\mu(\pi(w))}{\mu(w)} \right)^{1/2},
 \]
 which, by hypothesis, is unbounded.
\end{proof}

The next results will be used as tools in showing the the Connes metric is finite 
and induces the weak-$*$ topology for some subshifts. The point is really to get estimates 
on how a function may move vectors in 
$C_{n} \cap C_{n-1}^{\perp}$ to $C_{m} \cap C_{m-1}^{\perp}$, for values $n > m$.
The second is the important estimate; we include the first because it seems
convenient to divide it into two steps.

\begin{lemma}
\label{lem:control-Qw}
  Let $f$ be in $\cup_{n} C_{n}$. Let $w$ be in $X^{+}_{n}$ and $m<n$ with $\pi_{m}(w)$ in $X^{+}_{m}$. 
We have 
\[
 \big\langle [D, f] \eta\big(\pi_{m+1}(w)\big), Q_{w}(f) \big\rangle  = (\alpha_{n+1} - \alpha_{m+1}) \Vert Q_{w}(f) \Vert _{2}^{2} 
   \left( \frac{\mu(\pi_{m}(w))}{\mu(\pi_{m+1}(w)} -1 \right).
\]
\end{lemma}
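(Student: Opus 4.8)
The plan is to expand the inner product $\langle [D,f]\eta(\pi_{m+1}(w)), Q_w(f)\rangle$ by exploiting the rank-two structure of the commutator $[\eta(v),D]$ established in Lemma~\ref{lemma:zeta}. First I would write $v = \pi_{m+1}(w)$ for brevity, so that $\pi(v) = \pi_m(w)$ lies in $X_m^+$ and $\eta(v)$ sits in $C_{m+1}\cap C_m^\perp$. The term $Q_w(f)$ lives in $F_w \subset C_{n+1}\cap C_n^\perp$ and is a linear combination of the vectors $\eta(w')$ with $\pi(w')=w$. The key structural fact I would invoke is that, by part~2 of Lemma~\ref{lemma:zeta}, $[\eta(v),D] = \eta(v)\otimes\zeta(v)^* - \zeta(v)\otimes\eta(v)^*$ with $\zeta(v) = \mu(\pi(v))^{-1}(D-\alpha_{m+1})\xi(\pi(v)) \in C_m$. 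Writing $[D,f] = -[f,D]$ and using that $f$ and $\eta(v)$ are real-valued, the computation reduces to pairing these rank-one pieces against $\eta(v)$ and $Q_w(f)$.

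The main step is to track which pieces survive the inner product. Since $\zeta(v)\in C_m \subset C_n$ and $Q_w(f)\in C_n^\perp$, the component of $[D,f]\eta(v)$ coming from $\zeta(v)\otimes\eta(v)^*$ (i.e.\ proportional to $\zeta(v)$) is orthogonal to $Q_w(f)$ and drops out. What remains involves the $\eta(v)\otimes\zeta(v)^*$ piece, which produces a vector proportional to $\eta(v)$ — but $\eta(v)\in C_{m+1}\cap C_m^\perp$ is also orthogonal to $Q_w(f)$ unless I am more careful. The resolution is that $[D,f]$ is not just $[\eta(v),D]$: I must use that $f$ decomposes into its $Q_{w'}$-components and that only the $Q_w(f)$-component interacts nontrivially with $\eta(v)$ in the way that lands back in $F_w$. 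Concretely, I would use $Q_w(f) = \xi(w)(P_{n+1}-P_n)(f)$ together with the fact that multiplication by $\eta(v)$ sends $C_m$-vectors to scalar multiples of $\eta(v)$ (part~1 of Lemma~\ref{lemma:zeta}), so that the action of $[D,f]$ on $\eta(v)$, restricted to its projection onto $F_w$, is governed by the scalar $(\alpha_{n+1}-\alpha_{m+1})$ difference of eigenvalues across the two levels.

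Thus I expect the computation to collapse to $(\alpha_{n+1}-\alpha_{m+1})\langle (\text{something}), Q_w(f)\rangle$, where the eigenvalue difference arises because $D$ acts as $\alpha_{n+1}$ on $Q_w(f)\in C_{n+1}\cap C_n^\perp$ and as $\alpha_{m+1}$ on the relevant $\eta(v)$-level. The remaining factor $\mu(\pi_m(w))/\mu(\pi_{m+1}(w)) - 1$ should emerge from the normalization constants in the definition $\eta(v) = \frac{\mu(\pi(v))}{\mu(v)}\xi(v) - \xi(\pi(v))$ and from part~4 of Lemma~\ref{lemma:eta}, which gives $\Vert\eta(v)\Vert_2^2 = \mu(\pi(v))(\frac{\mu(\pi(v))}{\mu(v)}-1)$; comparing this against $\Vert Q_w(f)\Vert_2^2$ and the overlap coefficients is where the measure ratio surfaces.

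The hard part will be bookkeeping the orthogonality relations precisely enough to isolate exactly the single surviving term, since several pieces of $[D,f]\eta(v)$ are orthogonal to $Q_w(f)$ for different reasons (living in $C_n$, or in the wrong $F_{w'}$, or being supported on disjoint cylinders). I would organize this by first expanding $f = \sum \beta(w')\eta(w')$ as in the proof of Theorem~\ref{thm:commutator-of-D}, applying $[\cdot, D]$ termwise to $\eta(v)$ via Lemma~\ref{lemma:zeta}, and discarding every term whose $\zeta$ or $\eta$ factor is orthogonal to both $\eta(v)$ and $Q_w(f)$; the one nonzero contribution, after substituting $\Vert\eta(v)\Vert_2^2$ and simplifying the telescoping measure ratio, should yield the stated identity.
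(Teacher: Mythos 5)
Your plan is workable and, once executed carefully, does prove the lemma, but it takes a genuinely different and considerably heavier route than the paper's. The paper never decomposes $f$ and never invokes Lemma~\ref{lemma:zeta}: writing $w'=\pi_{m+1}(w)$, it simply expands $\langle [D,f]\eta(w'), Q_w(f)\rangle = \langle Df\eta(w'), Q_w(f)\rangle - \langle fD\eta(w'), Q_w(f)\rangle$, uses self-adjointness of $D$ together with the eigenvalue equations $D\eta(w')=\alpha_{m+1}\eta(w')$ and $DQ_w(f)=\alpha_{n+1}Q_w(f)$ to extract the factor $(\alpha_{n+1}-\alpha_{m+1})\langle f\eta(w'),Q_w(f)\rangle$, and then notes that $Q_w(f)$ is supported on $U(w)\subseteq U(w')$, where $\eta(w')$ is identically the constant $\frac{\mu(\pi_m(w))}{\mu(\pi_{m+1}(w))}-1$; the projection identity $\langle f,Q_w(f)\rangle=\Vert Q_w(f)\Vert_2^2$ finishes it in three lines. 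Your route---expand $f=\sum\beta(w'')\eta(w'')$ as in the proof of Theorem~\ref{thm:commutator-of-D} and apply part~2 of Lemma~\ref{lemma:zeta} termwise to $\eta(v)$, $v=\pi_{m+1}(w)$---does close, and your orthogonality bookkeeping is essentially right: every term $\langle \eta(v),\eta(w'')\rangle\,\zeta(w'')$ dies (the two $\eta$'s live in distinct eigenspaces unless $w''$ sits at level $m+1$, in which case $\zeta(w'')\in C_m\subseteq C_n\perp Q_w(f)$), and in the other family only the $w''$ with $\pi(w'')=w$ survive; for those, $-\langle\eta(v),\zeta(w'')\rangle = \mu(w)^{-1}(\alpha_{n+1}-\alpha_{m+1})\langle\eta(v),\xi(w)\rangle = (\alpha_{n+1}-\alpha_{m+1})\bigl(\frac{\mu(\pi_m(w))}{\mu(\pi_{m+1}(w))}-1\bigr)$, and summing $\beta(w'')\eta(w'')$ over $\pi(w'')=w$ reproduces $Q_w(f)$, hence $\Vert Q_w(f)\Vert_2^2$. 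Two corrections to your sketch: the measure ratio emerges from $\langle\eta(v),\xi(w)\rangle$ (constancy of $\eta(v)$ on $U(w)$) combined with the normalization $\mu(\pi(w''))^{-1}$ inside $\zeta(w'')$, not from $\Vert\eta(v)\Vert_2^2$ as you predicted; and your opening identification of $[D,f]$ with $[\eta(v),D]$ is exactly the confusion you later (rightly) disavow---the rank-two formula must be applied to the terms of $f$, not to $\eta(v)$. What the paper's argument buys is brevity and robustness (no basis expansion, no linear-dependence issues among the $\eta(w'')$, works verbatim for complex $f$); what yours buys is an explicit picture of precisely which matrix elements of $[D,f]$ the lemma isolates, in harmony with the proof of Theorem~\ref{thm:commutator-of-D}.
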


\begin{proof} 
For convenience let $w' = \pi_{m+1}(w)$.
Observe that the function $Q_{w}(f)$ is in $C_{n+1}\cap C_{n}^{\perp}$ while $\eta(w')$ is in $C_{m+1}\cap C_{m}^{\perp}$ .
Therefore, we have
\begin{align*} 
\langle [D, f] \eta(w'), Q_{w}(f) \rangle
        & =  \langle Df \eta(w'),  Q_{w}(f) \rangle - \langle fD \eta(w'), Q_{w}(f) \rangle   \\
        & =  \langle f \eta(w'), D Q_{w}(f) \rangle - \langle \alpha_{m+1}f\eta(w'), Q_{w}(f) \rangle  \\ 
        & =  \langle f \eta(w'), \alpha_{n+1} Q_{w}(f) \rangle - \langle \alpha_{m+1}f\eta(w'), Q_{w}(f) \rangle  \\
        & =  (\alpha_{n+1} - \alpha_{m+1}) \langle  f \eta(w'), Q_{w}(f) \rangle.
\end{align*}
Now we observe that $Q_{w}(f)$ is supported on $U(w) \subset U(w')$ (since $w' = \pi_{m+1}(w)$) 
and $\eta(w')$ is identically $ \left( \frac{\mu(\pi(w'))}{\mu(w')} -1 \right) $ on the latter. Therefore, we have 
\begin{align*}
 \langle [D, f] \eta(w'), Q_{w}(f)) \rangle
       & = (\alpha_{n+1} - \alpha_{m+1}) \langle f \eta(w'), Q_{w}(f) \rangle   \\
       & = (\alpha_{n+1} - \alpha_{m+1}) \left( \frac{\mu(\pi(w'))}{\mu(w')} -1 \right) \langle f, Q_{w}(f) \rangle  \\
       & = (\alpha_{n+1} - \alpha_{m+1}) \left( \frac{\mu(\pi(w'))}{\mu(w')} -1 \right) \langle Q_{w}(f), Q_{w}(f) \rangle.
\end{align*}

\end{proof}

\begin{prop}
\label{prop:control-Qw}
 Let $f$ be in $\cup_{n} C_{n}$.  Let $w$ be in $X^{+}_{n}$ and $m<n$ with $\pi_{m}(w)$ in $X^{+}_{m}$.
 If $\Vert [D, f] \Vert \leq 1$, then 
 \[
  \Vert Q_{w}(f) \Vert_{\infty}  \leq    R(w)^{1/2} R(\pi_{m}(w) )^{1/2} 
  (\alpha_{n+1} - \alpha_{m+1})^{-1}  \mu(w)^{-1/2} \mu(\pi_{m+1}(w))^{1/2} . 
 \]
\end{prop}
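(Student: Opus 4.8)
The plan is to combine the exact inner-product identity of Lemma~\ref{lem:control-Qw} with the $L^2$--$L^\infty$ comparison of Lemma~\ref{lemma:L2-Linfinity}, using Cauchy--Schwarz to pass from the commutator bound $\Vert[D,f]\Vert\le 1$ to a bound on $\Vert Q_w(f)\Vert_2$, and then converting to the sup-norm at the very end. First I would write $w'=\pi_{m+1}(w)$ and apply Lemma~\ref{lem:control-Qw}, which gives
\[
 (\alpha_{n+1}-\alpha_{m+1})\,\Vert Q_w(f)\Vert_2^2\left(\frac{\mu(\pi_m(w))}{\mu(w')}-1\right)
 = \big\langle [D,f]\eta(w'),\,Q_w(f)\big\rangle .
\]
By Cauchy--Schwarz and the hypothesis $\Vert[D,f]\Vert\le1$, the right-hand side is at most $\Vert\eta(w')\Vert_2\,\Vert Q_w(f)\Vert_2$, so after cancelling one factor of $\Vert Q_w(f)\Vert_2$ I obtain
\[
 \Vert Q_w(f)\Vert_2 \le (\alpha_{n+1}-\alpha_{m+1})^{-1}
 \left(\frac{\mu(\pi_m(w))}{\mu(w')}-1\right)^{-1}\Vert\eta(w')\Vert_2 .
\]

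Next I would substitute the norm formula from part~4 of Lemma~\ref{lemma:eta}, namely $\Vert\eta(w')\Vert_2^2=\mu(\pi_m(w))\big(\tfrac{\mu(\pi_m(w))}{\mu(w')}-1\big)$. This makes the factor $\big(\tfrac{\mu(\pi_m(w))}{\mu(w')}-1\big)$ appear with a net power of $-1/2$, which is the key cancellation: the apparently dangerous factor $\big(\tfrac{\mu(\pi_m(w))}{\mu(w')}-1\big)^{-1}$ — which could be large when $w'\in X_m^+$ has nearly-equal children — is tamed by Lemma~\ref{lemma:Rw}, which bounds $\big(\tfrac{\mu(\pi_m(w))}{\mu(w')}-1\big)^{-1}$ by $R(\pi_m(w))$. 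Carrying this through yields
\[
 \Vert Q_w(f)\Vert_2 \le (\alpha_{n+1}-\alpha_{m+1})^{-1}\,
 R(\pi_m(w))^{1/2}\,\mu(\pi_m(w))^{1/2}\,\mu(\pi_m(w))^{-1/2},
\]
so that $\Vert Q_w(f)\Vert_2 \le (\alpha_{n+1}-\alpha_{m+1})^{-1}R(\pi_m(w))^{1/2}$, possibly up to re-checking the exact measure powers.

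Finally I would pass from the $L^2$-norm to the $L^\infty$-norm. Since $Q_w(f)$ lies in $F_w$, Lemma~\ref{lemma:L2-Linfinity} gives $\Vert Q_w(f)\Vert_\infty\le \mu(w)^{-1/2}\Vert Q_w(f)\Vert_2$; multiplying the previous bound by $\mu(w)^{-1/2}$ produces the stated right-hand side, with the $R(w)^{1/2}$ factor coming from this last step and $R(\pi_m(w))^{1/2}$ from the cancellation above. I expect the main obstacle to be purely bookkeeping: keeping the various fractional powers of the measures $\mu(w)$, $\mu(w')=\mu(\pi_{m+1}(w))$, and $\mu(\pi_m(w))$ straight, and confirming that the factor $\big(\tfrac{\mu(\pi_m(w))}{\mu(w')}-1\big)$ enters exactly to the $-1/2$ power so that Lemma~\ref{lemma:Rw} applies cleanly to give the $R(\pi_m(w))^{1/2}$ term rather than a stray full power of $R$. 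No deep new idea is needed beyond these three cited lemmas; the art is in the arithmetic of the exponents.
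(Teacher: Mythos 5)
Your route is the same as the paper's: the identity of Lemma~\ref{lem:control-Qw}, Cauchy--Schwarz against $\nr{[D,f]}\le 1$, the norm formula of Lemma~\ref{lemma:eta}(4), Lemma~\ref{lemma:Rw}, and finally Lemma~\ref{lemma:L2-Linfinity}. But the bookkeeping you defer (``possibly up to re-checking the exact measure powers'') is exactly where the content lies, and as sketched your chain does not reach the stated inequality. Write $w'=\pi_{m+1}(w)$ and $t=\mu(\pi_m(w))/\mu(w')$. Your first two steps correctly give
\[
\nr{Q_w(f)}_2\le(\alpha_{n+1}-\alpha_{m+1})^{-1}(t-1)^{-1}\nr{\eta(w')}_2,
\]
and Lemma~\ref{lemma:eta}(4) gives $\nr{\eta(w')}_2=\mu(\pi_m(w))^{1/2}(t-1)^{1/2}$, hence
\[
\nr{Q_w(f)}_2\le(\alpha_{n+1}-\alpha_{m+1})^{-1}\,\mu(\pi_m(w))^{1/2}\,(t-1)^{-1/2}.
\]
Your intermediate display, in which the measure factors cancel completely and $\nr{Q_w(f)}_2\le(\alpha_{n+1}-\alpha_{m+1})^{-1}R(\pi_m(w))^{1/2}$, is arithmetically wrong: a factor $\mu(\pi_m(w))^{1/2}$ survives, and there is no $\mu(\pi_m(w))^{-1/2}$ anywhere to cancel it. If you repair this and then, as you propose, apply Lemma~\ref{lemma:Rw} only to $(t-1)^{-1/2}$ and multiply by $\mu(w)^{-1/2}R(w)^{1/2}$, you end with $\mu(\pi_m(w))^{1/2}$ where the Proposition asserts $\mu(\pi_{m+1}(w))^{1/2}$ --- a strictly weaker bound, and the discrepancy $\mu(\pi_m(w))/\mu(\pi_{m+1}(w))$ is unbounded in the cases of interest (it is of order $a_{k+1}$ for Sturmian subshifts).

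The missing step is the conversion $\mu(\pi_m(w))^{1/2}=t^{1/2}\mu(w')^{1/2}$, after which the factor to control is $\bigl(t/(t-1)\bigr)^{1/2}$, and the observation
\[
\frac{t}{t-1}=\frac{\mu(\pi_m(w))}{\mu(\pi_m(w))-\mu(w')}\le R(\pi_m(w)),
\]
which holds because $\mu(\pi_m(w))-\mu(w')=\sum\,\{\mu(w'')\tq\pi(w'')=\pi_m(w),\ w''\ne w'\}\ge\min\{\mu(w'')\tq\pi(w'')=\pi_m(w)\}=\mu(\pi_m(w))/R(\pi_m(w))$. Note this is the disjoint-union inequality occurring \emph{inside} the proof of Lemma~\ref{lemma:Rw}, not its statement: from the statement $(t-1)^{-1}\le R(\pi_m(w))$ alone, even combined with $t\le R(\pi_m(w))$, you only get $t/(t-1)\le 1+R(\pi_m(w))$, so ``applying Lemma~\ref{lemma:Rw} cleanly'' cannot by itself give the $R(\pi_m(w))^{1/2}$ of the statement. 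With this absorption one gets $\nr{Q_w(f)}_2\le(\alpha_{n+1}-\alpha_{m+1})^{-1}R(\pi_m(w))^{1/2}\mu(\pi_{m+1}(w))^{1/2}$, and Lemma~\ref{lemma:L2-Linfinity} --- whose correct form is $\nr{\cdot}_\infty\le\mu(w)^{-1/2}R(w)^{1/2}\nr{\cdot}_2$; your inline version omits the $R(w)^{1/2}$, though you reinstate it verbally --- finishes the proof. This is how the paper's own computation runs (its printed version contains typos at these very two spots, substituting $\mu(w')^{1/2}$ for the measure factor of $\nr{\eta(w')}_2$ and ending with $R(\pi_m(w))^{-1/2}$), which confirms that this exponent accounting is the one genuinely delicate point; your weaker bound with $\mu(\pi_m(w))^{1/2}$ would in fact still suffice for the applications in Theorems~\ref{thm:LR} and~\ref{thm:sturm-bounded-ae}, but it is not the inequality claimed.
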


\begin{proof}
Let $w' = \pi_{m+1}(w)$.
From the hypothesis and the last Lemma, we have 
 \begin{align*}
 \Vert  \eta(w') \Vert_{2} \Vert Q_{w}(f) \Vert_{2}
              & \geq  \abs{ \big\langle [D, f] \eta(w'), Q_{w}(f) \big\rangle } \\
              &  =   (\alpha_{n+1} - \alpha_{m+1}) \Vert Q_{w}(f) \Vert_{2}^{2}  \left( \frac{\mu(\pi(w'))}{\mu(w')} -1 \right),
 \end{align*}
and hence 
\(
\Vert  \eta(w') \Vert_{2}  \geq (\alpha_{n+1} - \alpha_{m+1}) \Vert Q_{w}(f) \Vert_{2} 
 \left( \frac{\mu(\pi(w'))}{\mu(w')} -1 \right)
\).
Now we invoke our estimate for $\Vert  \eta(w') \Vert_{2} $  from  Lemma~\ref{lemma:eta} and 
recall from Lemma~\ref{lemma:Rw} that
\(
 \left( \frac{\mu(\pi(w'))}{\mu(w')} -1 \right)^{-1} \leq R(\pi(w')) = R(\pi_{m}(w))
\),
so we have
\begin{align*}
\Vert Q_{w}(f) \Vert_{\infty}
       &  \leq \mu(w)^{-1/2} R(w)^{1/2} \Vert Q_{w}(f) \Vert_{2} \\
       &  \leq \mu(w)^{-1/2} R(w)^{1/2}  \Vert  \eta(w') \Vert_{2}
          (\alpha_{n+1} - \alpha_{m+1})^{-1} \left( \frac{\mu(\pi(w'))}{\mu(w')} -1 \right)^{-1} \\
       &  =    \mu(w)^{-1/2} R(w)^{1/2}  \mu(w')^{1/2} \left( \frac{\mu(\pi(w')}{\mu(w')} -1 \right)^{1/2} \\*
   & \qquad \qquad  \qquad \qquad      (\alpha_{n+1} - \alpha_{m+1})^{-1} \left( \frac{\mu(\pi(w'))}{\mu(w')} -1 \right)^{-1} \\
       & \leq  \mu(w)^{-1/2} R(w)^{1/2}  \mu(w')^{1/2}
            (\alpha_{n+1} - \alpha_{m+1})^{-1} \left( \frac{\mu(\pi(w'))}{\mu(w')} -1 \right)^{-1/2} \\
       &  =    \mu(w)^{-1/2} R(w)^{1/2}  \mu(\pi_{m+1}(w))^{1/2} 
             (\alpha_{n+1} - \alpha_{m+1})^{-1}  R(\pi_{m}(w))^{-1/2}.
\end{align*}
The conclusion follows.
\end{proof}

\subsection{Shifts of finite type}
\label{ssec:connes-SFT}

\begin{thm}
 \label{thm:connes-SFT}
 Let $G$ be an aperiodic, irreducible graph, $X^{G}$ be its associated shift of finite type and $\mu$ be the Parry measure. 
 The Connes metric from $(C_{\infty}(X^{G}), L^{2}(X^{G}, \mu), D_{X^{G}})$ is infinite.
\end{thm}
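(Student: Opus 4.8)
The plan is to apply Theorem~\ref{thm:sufficient-for-unbounded}, which reduces the claim to showing that the set
\[
 \left\{ (n+1)^{-2} \frac{\mu(\pi(w))}{\mu(w)} \mid n \geq 1, \ \pi(w) \in X_{n}^{+} \right\}
\]
is unbounded. So the real task is to produce, for arbitrarily large $n$, a special word $\pi(w) \in X_{n}^{+}$ whose measure ratio $\mu(\pi(w))/\mu(w)$ grows faster than quadratically in $n$. For a shift of finite type with the Parry measure, the measures are given explicitly by Equation~\eqref{eq:SFT-measure}, namely $\mu(w_{1}, \ldots, w_{n}) = u_{i(w_{1})} v_{t(w_{n})} \lambda^{-(n-1)}$. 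The key feature I would exploit is that the Parry measure depends on a word only through its first and last symbols (and its length): extending a word by one symbol multiplies the measure by a factor of order $\lambda^{-1}$ together with a bounded ratio of eigenvector entries. This means the ratios $\mu(\pi(w))/\mu(w)$ are \emph{bounded} (by a constant depending only on $G$ via $\lambda$ and the entries of $u,v$), not growing.

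This immediately tells me that Theorem~\ref{thm:sufficient-for-unbounded} as literally applied to the quantity $(n+1)^{-2}\mu(\pi(w))/\mu(w)$ will \emph{not} be unbounded if we only consider one-step extensions with the Parry measure — the numerator stays bounded while $(n+1)^{-2} \to 0$. Hence the main obstacle, and the crux of the argument, is that the naive estimate fails, and one must instead construct test functions directly rather than invoke Theorem~\ref{thm:sufficient-for-unbounded}. The right approach is to build a distant-separation test function out of the $\eta$-vectors: for a fixed special word, I would take a sum $f = \sum_{j} \eta(\pi_{m_{j}+1}(x))$ over a sequence of levels $m_{j}$ at which the point $x$ passes through special words, exploiting that these $\eta$-vectors are mutually orthogonal (noted just before Lemma~\ref{lemma:eta}) and that Proposition~\ref{prop:control-Qw} controls how $[D,f]$ interacts across levels.

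Concretely, I would fix a point $x \in X^{G}$ that visits special words at infinitely many levels $n$ (which exists since the shift is infinite and irreducible, so $X_{n}^{+}$ is nonempty cofinally), and consider functions supported on the nested cylinders $U(\pi_{n}(x))$. The plan is to show that one can keep $\Vert [D_{X^{G}}, f] \Vert$ bounded while making $\Vert f + \C \Vert_{\infty}$ arbitrarily large, by summing contributions across many levels where the commutator norm contributions, weighted by the gaps $(\alpha_{n+1}-\alpha_{m+1})^{-1} = (n-m)^{-1}$ from Proposition~\ref{prop:control-Qw}, remain controlled while the sup-norm grows. The arithmetic heart of the estimate is that with $\alpha_{n}=n$, the level gaps give harmonic-type weights, and since special levels for an SFT occur with positive density (indeed at essentially every level, because almost every symbol can be extended in more than one way in an aperiodic irreducible graph), the sup-norm of the accumulated test function diverges while the commutator stays bounded.

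The step I expect to be the genuine obstacle is the careful bookkeeping showing that the cross terms in $\Vert [D_{X^{G}}, f] \Vert$ — the interactions between $\eta$-vectors at different levels $m_{j}$ — do not accumulate faster than the diagonal sup-norm growth. Here I would lean on the rank-structure of $[\eta(w),D]$ from part~2 of Lemma~\ref{lemma:zeta} (each commutator is a sum of two rank-one operators between $C_{n}$ and $C_{n}^{\perp}$) together with the bound in part~5, and on the orthogonality of the $\zeta(w)$ and $\eta(w')$ vectors at distinct levels, to show the operator norm of the full commutator is comparable to a supremum rather than a sum over levels. Once that decoupling is established, the bounded Parry measure ratios guarantee a uniform per-level commutator bound, while the telescoping of the $\eta$-vectors produces a sup-norm that grows like the number of special levels traversed, forcing the Connes diameter to be infinite.
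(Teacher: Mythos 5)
Your opening diagnosis is exactly right and matches the paper: under the Parry measure, Equation~\eqref{eq:SFT-measure} makes the ratios $\mu(\pi(w))/\mu(w)$ depend only on $\lambda$ and finitely many eigenvector entries, so they stay bounded, the quantity in Theorem~\ref{thm:sufficient-for-unbounded} tends to zero, and one must build test functions by hand. Your architecture is also the paper's: it picks a word $w = e_1\cdots e_p$ returning to a vertex with two incoming edges, sets $w_k = e_p w^{2k}$ (so $\pi(w_k) = w^{2k} \in X^+_{2kp}$ with ratio constantly equal to $a = v_1 v_{i(e_p)}^{-1}\lambda > 1$), and takes $f_K = \sum_{k=1}^K k^{-1}\eta(w_k)$; since the cylinders $U(w_k)$ are nested, $\Vert f_K + \C \Vert_\infty \geq \frac{a-1}{2}\sum_{k=1}^K k^{-1} \to \infty$, which is precisely your ``harmonic-type weights along nested special levels.''

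The gap is in the step you yourself flag as the crux: the cross-level control of $[f_K,D]$. You propose to decouple levels via ``the orthogonality of the $\zeta(w)$ and $\eta(w')$ vectors at distinct levels,'' so that the commutator norm is comparable to a supremum of per-level bounds. The $\eta(w_k)$ at distinct levels are indeed orthogonal, but the $\zeta(w_k) = \mu(\pi(w_k))^{-1}(D - \alpha_{2pk+1})\xi(\pi(w_k))$ are emphatically not: they are built from the nested indicators $\xi(\pi(w_k))$, whose mutual overlaps are large (part~1 of Lemma~\ref{lemma:technical-SFT} gives $P_{2p(k-1)}\xi(\pi(w_k)) = \lambda^{-2p}\xi(\pi(w_{k-1}))$). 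Hence the norm of $T_K = \sum_k k^{-1}\zeta(w_k)\otimes\eta(w_k)^*$ is governed by a Gram matrix with genuinely nonzero off-diagonal entries, and since each summand only satisfies $k^{-1}\Vert\eta(w_k)\Vert_2\Vert\zeta(w_k)\Vert_2 \leq k^{-1}a^{1/2}(2pk+1) = O(1)$ (part~5 of Lemma~\ref{lemma:zeta}), a triangle-inequality bound diverges linearly and the supremum bound you want is exactly what must be proved --- it does not follow from orthogonality. The missing idea, which is the real content of the paper's proof, is the telescoping orthogonal decomposition $\xi(\pi(w_k)) = \sum_{j=1}^k \lambda^{-2p(k-j)}\xi_j + \lambda^{-2pk}$ of Lemma~\ref{lemma:technical-SFT}: the geometric factor $\lambda^{-2p(k-j)}$ beats the polynomial growth $\Vert (2pk+1-D)\xi_j\Vert \leq 2p(k-j+1)\Vert\xi_j\Vert$, and a Cauchy--Schwarz estimate in $k$ (using $\sum_k k^{-2} < \infty$, which is where the harmonic weights are truly needed) yields $\Vert T_K\Vert \leq C$ uniformly in $K$, whence the conclusion via Rieffel's criterion. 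Without this decay estimate or an equivalent, your argument does not close. A minor mislabel as well: Proposition~\ref{prop:control-Qw} is an upper bound on $\Vert Q_w(f)\Vert_\infty$ for functions with $\Vert [D,f]\Vert \leq 1$, used for the finiteness results (Theorems~\ref{thm:LR} and~\ref{thm:sturm-bounded-ae}); it plays no role in the SFT proof.
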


The proof will involve some technical lemmas. Let us first lay down some basic tools.
Recall that we write our vertex set $G^{0} = \{ 1, 2, \ldots, I \}$. Also recall that $u_{G}, v_{G}$ 
are the left and right 
Perron eigenvectors for the incidence matrix $A$ with Perron eigenvalue $\lambda_{G}$,
appropriately normalised.
For convenience, we shorten these to $u, v$ and $\lambda$ respectively.
The invariant measure is defined by Equation~\eqref{eq:SFT-measure}.

 Since $G$ is aperiodic there exists a vertex $j$  with 
 $\# t^{-1}\{ j \} > 1$. 
 Let us assume that $j=1$. It follows then
 that for any word $w= e_{1}e_{2} \cdots e_{n}$ of even length with $i(e_{1}) = 1$,  $w$ is in 
 $X^{+}_{n}$.
 
 From the fact that $G$ is irreducible, we may find a word $w= e_{1}e_{2} \cdots e_{p}$ with 
 $i(e_{1}) = 1 = t(e_{p})$. 
 For each $k \geq 1$, we define $w_{k} = e_{p}w^{2k}$. Note that $w_{k}$ is in 
 $X_{2kp + 1}$ and $\pi(w_{k}) = w^{2k}$, which is in  $X^{+}_{2kp}$. 
 
 We note that $\mu(w_{k}) = \Vert \xi(w_{k}) \Vert_{2} ^{2} = v_{i(e_{p})}u_{1} \lambda^{-2kp}$ and 
 that $\mu(\pi(w_{k})) = \Vert \xi(\pi(w_{k})) \Vert_{2} ^{2} = v_{1}u_{1} \lambda^{-2kp+1}$.
 It follows that 
 \begin{equation}\label{eq:a}
 1 <  \frac{\mu(\pi(w_{k}))}{\mu(w_{k}) } = v_{1}v_{i(e_{p})}^{-1} \lambda.
 \end{equation}
 This is constant in $k$ and we denote its value by $a$ for convenience.

 For $K \geq 1$, consider the function $f_{K} = \sum_{k=1}^{K} k^{-1} \eta(w_{k})$. 
 Notice this function
 is in $C_{0}^{\perp}$, so it takes on both positive and 
 negative values. Secondly, from the definition, we see that
 \[
  f_{K} | U(w_{k}) = \sum_{k=1}^{K} k^{-1} \left( \frac{\mu(\pi(w_{k}))}{ \mu(w_{k})} - 1 \right)
     = ( a - 1 ) \sum_{k=1}^{K} k^{-1}.
 \]
Since $f_{K}$ takes on negative values, we conclude that in the quotient $C(X)/\C$, we have 
$\Vert f_{K} + \C \Vert_{\infty} \geq \frac{a-1}{2} \sum_{k=1}^{K} k^{-1}$. The obvious
conclusion being that these norms go to infinity as $K$ does. We will now show that 
$\Vert [ f_{K}, D] \Vert$ is bounded and the conclusion follows from \cite[Theorem 2.1]{Rie04}.

It follows from part~3 of Lemma~\ref{lemma:zeta} that
 \[ \begin{split}
 [f_{K}, D] & = \sum_{k=1}^{K} k^{-1} \big( \eta(w_{k}) \otimes \zeta(w_{k})^{*} - \zeta(w_{k}) \otimes  \eta(w_{k}) ^{*} \big) \\
            & = \left( \sum_{k=1}^{K} k^{-1}  \zeta(w_{k}) \otimes \eta(w_{k})^{*} \right)^{*} - \sum_{k=1}^{K} k^{-1}  \zeta(w_{k}) \otimes \eta(w_{k})^{*}.
 \end{split} \]
  Let $T_{K} = \sum_{k=1}^{K} k^{-1}  \zeta(w_{k}) \otimes \eta(w_{k})^{*}$. It suffices
  to prove that $\Vert T_{K} \Vert$ is bounded, independent of $K$.
  
  First notice that $T_{K}$ is zero on the orthogonal complement of the
  span of the $\eta(w_{k}), 1 \leq k \leq K$. Secondly, this set
  is orthonormal.
  Therefore, in order to control $\nr{T_K}$, it suffices to evaluate the norm of products $\nr{T_K \eta}$ for vectors $\eta$ of the form
  \[
   \eta = \sum_{k=1}^{K} \beta_{k} \mu(\pi(w_{k}))^{-1/2} (a-1)^{-1/2} \eta(w_{k}).
  \]
It follows from Lemma~\ref{lemma:eta} and~Equation~\eqref{eq:a} that $\Vert \eta \Vert_{2}^{2} = \sum_{k=1}^{K} \vert \beta_{k} \vert^{2}$.
 
 We also have 
\begin{align*}
  T_{K} \eta & = \left[ \sum_{k=1}^{K} k^{-1}  \zeta(w_{k}) \otimes \eta(w_{k})^{*} \right] 
                     \sum_{k'=1}^{K} \beta_{k'} \mu(\pi(w_{k'}))^{-1/2} (a-1)^{-1/2} \eta(w_{k'}) \\
             & = \sum_{k=1}^{K} \sum_{k'=1}^{K} k^{-1} \beta_{k'} \mu(\pi(w_{k'}))^{-1/2} (a-1)^{-1/2}  \langle \eta(w_{k'}) , \eta(w_{k}) \rangle \zeta(w_{k}) \\
             & = \sum_{k=1}^{K} k^{-1}  \beta_{k} \mu(\pi(w_{k}))^{-1/2} (a-1)^{-1/2} \mu(\pi(w_{k}))  (a-1)\zeta(w_{k}) \\
             & = \sum_{k=1}^{K} k^{-1} \beta_{k} \mu(\pi(w_{k}))^{1/2} (a-1)^{1/2} \zeta(w_{k}).
\end{align*}

\begin{lemma}\label{lemma:technical-SFT}
 For all $k \geq 1$, we have
 \begin{enumerate}
  \item $P_{2p(k-1) } \xi(\pi(w_{k})) = \lambda^{-2p} \xi(\pi(w_{k-1}))$,
  \item $\xi(\pi(w_{k})) - \lambda^{-2p} \xi(\pi(w_{k-1}))$ is in $C_{2pk} \cap C_{2p(k-1)}^{\perp}$,
  \item $\Vert \xi(\pi(w_{k})) - \lambda^{-2p} \xi(\pi(w_{k-1})) \Vert^{2}_{2} = u_{1} v_{1} (\lambda - \lambda^{1-2p}) \lambda^{-2pk}$.
  \item
   \(\displaystyle
    \xi(\pi(w_{k})) = \sum_{j=1}^{k} \lambda^{-2p(k-j)} \left[ \xi(\pi(w_{j})) -  \lambda^{-2p} \xi(\pi(w_{j-1})) \right] + \lambda^{-2pk}
   \).
 \end{enumerate}
\end{lemma}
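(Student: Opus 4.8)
The key tool I would use is that each projection $P_m$ is the conditional expectation onto the finite $\sigma$-algebra generated by $\mathcal{P}_m = \{U(v) : v \in X_m\}$: since the $\xi(v)$, $v \in X_m$, are mutually orthogonal with $\|\xi(v)\|_2^2 = \mu(v)$ and span $C_m$, one has
\[
 P_m f = \sum_{v \in X_m} \frac{1}{\mu(v)}\Bigl(\int_{U(v)} f \, d\mu\Bigr)\, \xi(v).
\]
Everything rests on the first assertion, which I would prove by determining exactly which level-$2p(k-1)$ cylinders meet $U(\pi(w_k))$. Since $\pi(w_k) = w^{2k}$ is $p$-periodic, deleting one copy of $w$ from each end leaves $w^{2(k-1)} = \pi(w_{k-1})$; more precisely, on the central coordinate window determining $C_{2p(k-1)}$ the word $w^{2k}$ restricts to $w^{2(k-1)}$, so $U(\pi(w_k)) \subseteq U(\pi(w_{k-1}))$ and no other $v \in X_{2p(k-1)}$ satisfies $U(v) \cap U(\pi(w_k)) \neq \emptyset$. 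Feeding this into the displayed formula collapses the sum to the single term $\tfrac{\mu(\pi(w_k))}{\mu(\pi(w_{k-1}))}\xi(\pi(w_{k-1}))$, and the Parry values $\mu(\pi(w_k)) = u_1 v_1 \lambda^{-2kp+1}$ from \eqref{eq:SFT-measure} give the ratio $\lambda^{-2p}$, proving item~1. The one place to watch is the base case $k=1$, where $\pi(w_0) = \varepsilon$ and $\xi(\varepsilon) = 1$; there the scalar must be checked directly against $\mu(\varepsilon)=1$ and the chosen normalisation of $u$ and $v$.

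Items~2 and~3 are then formal. Item~2 holds because $\xi(\pi(w_k)) \in C_{2pk}$ and $\xi(\pi(w_{k-1})) \in C_{2p(k-1)} \subseteq C_{2pk}$, so the difference lies in $C_{2pk}$, while item~1 gives $P_{2p(k-1)}\bigl(\xi(\pi(w_k)) - \lambda^{-2p}\xi(\pi(w_{k-1}))\bigr) = \lambda^{-2p}\xi(\pi(w_{k-1})) - \lambda^{-2p}\xi(\pi(w_{k-1})) = 0$, placing it in $C_{2p(k-1)}^\perp$. For item~3 I would split $\xi(\pi(w_k)) = \bigl[\xi(\pi(w_k)) - \lambda^{-2p}\xi(\pi(w_{k-1}))\bigr] + \lambda^{-2p}\xi(\pi(w_{k-1}))$ into its $C_{2p(k-1)}^\perp$ and $C_{2p(k-1)}$ components and apply Pythagoras, so that
\[
 \bigl\| \xi(\pi(w_k)) - \lambda^{-2p}\xi(\pi(w_{k-1})) \bigr\|_2^2 = \mu(\pi(w_k)) - \lambda^{-4p}\mu(\pi(w_{k-1}));
\]
substituting the Parry values from \eqref{eq:SFT-measure} and factoring then yields $u_1 v_1(\lambda - \lambda^{1-2p})\lambda^{-2pk}$.

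Finally, item~4 is a telescoping identity. Writing $\delta_j = \xi(\pi(w_j)) - \lambda^{-2p}\xi(\pi(w_{j-1}))$, the claimed sum is $\sum_{j=1}^k \lambda^{-2p(k-j)}\delta_j$; splitting it into two sums and reindexing the second by $i = j-1$ shows that all intermediate terms cancel, leaving $\xi(\pi(w_k)) - \lambda^{-2pk}\xi(\pi(w_0))$. Since $\pi(w_0) = \varepsilon$ with $\xi(\varepsilon) = 1$, this equals $\xi(\pi(w_k)) - \lambda^{-2pk}$, which rearranges to the stated formula.

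I expect the main obstacle to be item~1 — specifically the bookkeeping of coordinate windows needed to justify that $w^{2k}$ restricts to $w^{2(k-1)}$ on the central window (keeping the odd/even centring conventions straight) and the boundary behaviour at $k=1$. Once item~1 is secured, items~2--4 follow routinely from orthogonality and telescoping.
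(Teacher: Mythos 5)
Your proposal is correct and takes essentially the same route as the paper's proof: the paper also establishes item 1 by expanding $P_{2p(k-1)}\xi(\pi(w_k))$ against the orthogonal family $\{\xi(v) \mid v \in X_{2p(k-1)}\}$ and noting $\xi(\pi(w_{k-1}))$ is the only vector with non-zero inner product (your conditional-expectation formula is the same computation, with your window bookkeeping merely making explicit what the paper asserts), then gets item 2 immediately, item 3 by the same Pythagorean splitting, and item 4 by an omitted induction equivalent to your telescoping. The $k=1$ subtlety you flag is real and the paper glosses over it too --- with $\xi(\pi(w_0))=\xi(\varepsilon)=1$ the ratio $\mu(\pi(w_1))/\mu(\pi(w_0))$ equals $\lambda^{-2p}$ only for a particular value of $u_1v_1$ --- but this does not affect your purely algebraic telescoping in item 4, so your treatment is no weaker than the paper's.
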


 \begin{proof}
  The vectors $\xi(w), w \in X_{2p(k-1)}$ form an orthonormal set in $C_{2p(k-1)}$.
  The only one of these with a non-zero inner product with $\xi(\pi(w_{k}))$ is 
  $\xi(\pi(w_{k-1}))$. Hence we have 
  \begin{align*}
     P_{2p(k-1)} \xi(\pi(w_{k})) & = 
           \frac{ \langle \xi(\pi(w_{k})), \pi(\xi(w_{k-1})) \rangle}{ \langle \xi(\pi(w_{k-1})), \pi(\xi(w_{k-1})) \rangle} \xi(\pi(w_{k-1})) \\
       & = \frac{\mu(\xi(\pi(w_{k})))}{\mu( \xi(\pi(w_{k-1})))} \xi(\pi(w_{k-1})) 
        = \lambda^{-2p} \xi(\pi(w_{k-1})).
  \end{align*}

  The second part follows immediately from the first. The third follows from the orthogonality of $\lambda^{-2p} \xi(\pi(w_{k-1}))$
  and $\xi(\pi(w_{k})) - \lambda^{-2p} \xi(\pi(w_{k-1}))$, so we have 
  \begin{align*}
   \Vert \xi(\pi(w_{k})) - \lambda^{-2p} \xi(\pi(w_{k-1})) \Vert^{2}_{2} 
     & =  \Vert \xi(\pi(w_{k})) \Vert^{2}_{2} - \Vert  \lambda^{-2p} \xi(\pi(w_{k-1})) \Vert^{2}_{2} \\
     & =  u_{1}v_{1}\lambda^{-2kp+1} - \lambda^{-4p}u_{1}v_{1}\lambda^{-2(k-1)p+1}  \\
     & =  u_{1}v_{1} (\lambda  - \lambda^{1-2p}) \lambda^{-2kp}.
   \end{align*}

   The last also follows from the first and an easy induction argument, which we omit.
  \end{proof}

In the following computation, we denote
$\xi \big( \pi(w_{j}) ) -  \lambda^{-2p} \xi(\pi(w_{j-1}) \big)$ by
$\xi_{j}$, for $ 1 \leq j \leq K$.
We compute
\begin{align*}
 T_{K} \eta
    & = \sum_{k=1}^{K} k^{-1} \beta_{k} \mu(\pi(w_{k}))^{1/2} (a-1)^{1/2} \zeta(w_{k}) \\
    & = \sum_{k=1}^{K} k^{-1} \beta_{k} \mu(\pi(w_{k}))^{-1/2} (a-1)^{1/2} (2pk+1 - D) \xi(\pi(w_{k})) \\
    & = \sum_{k=1}^{K} k^{-1} \beta_{k} (u_{1}v_{1} \lambda^{-2pk+1})^{-1/2} (a-1)^{1/2} (2pk+1 - D) \xi(\pi(w_{k})) \\
    & = \left( \frac{a-1}{u_{1}v_{1} \lambda} \right)^{1/2}  \sum_{k=1}^{K} k^{-1} \beta_{k}\lambda^{pk} (2pk+1 - D) \\*
        & \qquad \left[ \sum_{j=1}^{k} \lambda^{-2p(k-j)} ( \xi(\pi(w_{j})) -  \lambda^{-2p} \xi(\pi(w_{j-1})))  + \lambda^{-2pk} \right] \\
    & = \left( \frac{a-1}{u_{1}v_{1} \lambda} \right)^{1/2} \sum_{k=1}^{K} \sum_{j=1}^{k}  k^{-1} \beta_{k}  \lambda^{-pk}\lambda^{2pj} (2pk+1 - D) \xi_{j} \\*
    &  \qquad  + \left( \frac{a-1}{u_{1}v_{1} \lambda} \right)^{1/2}\sum_{k=1}^{K} k^{-1} \beta_{k} \lambda^{pk}(2pk+1 - D)\lambda^{-2pk}.  \\
\end{align*}
Let us begin by considering the second term on the right which is simply a constant function considered
as a vector in $L^{2}(X, \mu)$.  Ignoring the constants in front of the sum, the norm of this vector is bounded by
\begin{align*}
     \sum_{k=1}^{K} \vert  \beta_{k} \vert    k^{-1} (2pk+1 )\lambda^{-pk}    
                      &  \leq  \sum_{k=1}^{K} \vert  \beta_{k} \vert   (2p+1) \lambda^{-pk}  \\  
                      &  \leq (2p+1) \left( \sum_{k=1}^{K} \vert  \beta_{k} \vert^{2}  \right)^{1/2}  \left( \sum_{k=1}^{K} \lambda^{-2pk}  \right)^{1/2} \\
                      &  \leq  (2p+1)  \left( \frac{\lambda^{-2p}}{1 - \lambda^{-2p}} \right)^{1/2} \Vert \eta \Vert_{2}.
\end{align*}

Now we turn to the first term.
For any given $j$, the vector $\xi_{j}$ is in $C_{2pj} \cap C_{2p(j-1)}^{\perp}$
and on this space $1 + 2p(j-1) \leq D \leq 2pj$.
For values of $k \geq j$, we have $0 \leq 2pk +1 -D \leq 2p(k-j+1)$
and, in particular, $\Vert  2pk +1 -D  \Vert \leq 2p(k-j+1)$.
Fixing $k$ for the moment, the norm of the vector $\sum_{j=1}^{k} \lambda^{-pk}\lambda^{2pj} (2pk+1 - D) \xi_{j}$ is bounded by:
\begin{multline*} 
  \sum_{j=1}^k \lambda^{2pj} \lambda^{-pk}(k-j+1)\bigl( u_1 v_1 (\lambda - \lambda^{1-2p}) \lambda^{-2pk} \bigr)^{1/2} \\
   \leq C \sum_{j=1}^k \lambda^{-2p(k-j)} (k-j+1) 
   = C \sum_{j=1}^k j \lambda^{-2p(j-1)}
\end{multline*}
where $C$ gathers the constants and the equality is a change of variable.
Now, by deriving the power series $\sum x^n$ and letting $x=\lambda^{-2p}$ we get
\[
 \sum_{j=1}^k j\lambda^{-2p(j-1)} \leq \frac{1}{(1-\lambda^{-2p})^2}.
\]
If we now sum over $k$ and apply the Cauchy--Schwarz inequality, we get
\begin{align*}
 \biggl\Vert \sum_{k=1}^{K} \sum_{j=1}^{k}  k^{-1} \beta_{k}  \lambda^{-pk}\lambda^{2pj} (2pk+1 - D) \xi_{j} \biggr\Vert
   & \leq C' \sum_{k=1}^K \abs{\beta_{k}} k^{-1} \\
   & \leq C' \nr{\beta}_2 \Bigl({\sum_{k \geq 1} k^{-2}}\Bigr)^{1/2} \leq C'' \nr{\beta}_2.
\end{align*}

Putting all of this together we have shown that $\Vert T_{K} \eta \Vert_{2} $ is bounded above by a constant, independent of $K$, times  $\Vert  \eta \Vert_{2} $.
This complete the proof of Theorem~\ref{thm:connes-SFT}.

\subsection{Linearly recurrent subshifts}

\begin{thm}
 \label{thm:LR}
Let $X$ be a linearly recurrent subshift and 
let  $\mu$ be  its unique invariant measure.
Then the  spectral triple
\[
(C_{\infty}(X), L^{2}(X, \mu), D_{X})
\]
has finite Connes distance and the topology induced on the state space of $C(X)$ is the weak-$*$ topology.
\end{thm}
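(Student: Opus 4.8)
The plan is to verify Rieffel's criterion for the Lip-norm $L(f) = \Vert [D_X, f] \Vert$. Theorem~\ref{thm:commutator-of-D} guarantees that $\{ f \in C_\infty(X) \tq L(f) = 0 \} = \C 1$, so $L$ genuinely induces a metric on the state space. By Rieffel's work~\cite{Rie99, Rie04}, the Connes metric is finite and induces the weak-$*$ topology as soon as the image
\[
 \mathcal{B} = \{ f + \C \tq f \in C_\infty(X),\ L(f) \leq 1 \}
\]
is a \emph{totally bounded} subset of the quotient normed space $C(X)/\C$; mere boundedness of $\mathcal{B}$ already yields finiteness of the metric. Thus everything reduces to a uniform supremum-norm estimate on the components of $f$.

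First I would fix $f$ with $L(f) \leq 1$ and, subtracting $P_0 f$, assume $f \in C_0^\perp$. Expanding $f = \sum_w Q_w(f)$ over the special words and evaluating at a point $x \in X$, only the special ancestors of $x$ survive: if $n_1 < n_2 < \cdots$ are the levels with $\pi_{n_j}(x) \in X_{n_j}^+$, then $f(x) = \sum_j Q_{\pi_{n_j}(x)}(f)(x)$, a finite sum when $f \in C_N$. To each term I apply Proposition~\ref{prop:control-Qw} with $w = \pi_{n_j}(x)$ and $m = n_{j-1}$, so that $\pi_m(w) = \pi_{n_{j-1}}(x) \in X_{n_{j-1}}^+$ and $\alpha_{n+1} - \alpha_{m+1} = n_j - n_{j-1}$. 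Linear recurrence enters through Lemma~\ref{lem:R-LR}, bounding $R(w)^{1/2} R(\pi_m(w))^{1/2}$ by a constant, and through Proposition~\ref{prop:LR-measure-clopen}, giving $\mu(\pi_{n_j}(x))^{-1/2} \leq (K n_j)^{1/2}$ and $\mu(\pi_{n_{j-1}+1}(x))^{1/2} \leq (K/(n_{j-1}+1))^{1/2}$. Combining these yields
\[
 \bigl\Vert Q_{\pi_{n_j}(x)}(f) \bigr\Vert_\infty \leq \frac{C}{n_j - n_{j-1}} \left( \frac{n_j}{n_{j-1}+1} \right)^{1/2}
\]
for a constant $C$ depending only on the subshift.

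The decisive input is that for a linearly recurrent subshift the special levels along any point grow geometrically: there is $\tau > 1$ with $n_j \geq \tau n_{j-1}$ for all $j$ (the exponential growth recorded in the discussion following Lemma~\ref{lem:LR-bound-Rw}). Using $n_j - n_{j-1} \geq (1 - \tau^{-1}) n_j$, the displayed bound becomes $\Vert Q_{\pi_{n_j}(x)}(f) \Vert_\infty \leq C' (n_j n_{j-1})^{-1/2}$, and since the $n_j$ grow at least geometrically the series $\sum_j (n_j n_{j-1})^{-1/2}$ converges, uniformly in $x$ and in $f$ with $L(f) \leq 1$. This already gives $\Vert f + \C \Vert_\infty \leq \sup_x \sum_j \Vert Q_{\pi_{n_j}(x)}(f) \Vert_\infty \leq M$, so $\mathcal{B}$ is bounded and the metric is finite. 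For total boundedness I would split $f + \C = g_N + h_N$, where $g_N$ collects the components $Q_w(f)$ with $\abs{w} < N$ and $h_N$ the rest. The same estimate shows $\Vert h_N \Vert_\infty \to 0$ as $N \to \infty$, uniformly over $\{ L(f) \leq 1 \}$ (the first surviving term is controlled by $n_j \geq N$ through the factor $n_j^{-1/2}$, the remaining ones by the geometric tail), while each $g_N$ lies in the fixed finite-dimensional space $C_N$, where a bounded set is automatically totally bounded. Hence $\mathcal{B}$ is totally bounded, which is exactly Rieffel's condition.

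I expect the main obstacle to be the passage in the last paragraph: turning the per-point, per-term estimate into a genuinely uniform tail bound. The subtlety is that the natural control on the level-$n_j$ component is expressed through the previous special level $n_{j-1}$, which can be much smaller than $n_j$; it is only by retaining the full factor $(n_j n_{j-1})^{-1/2}$ (rather than collapsing to $n_{j-1}^{-1}$) and exploiting the geometric growth of the $n_j$ that the head/tail splitting produces a tail small uniformly in both $x$ and $f$. Correctly invoking the geometric growth of the special levels for linearly recurrent subshifts is the other essential ingredient.
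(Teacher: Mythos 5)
Your overall strategy coincides with the paper's: verify Rieffel's total-boundedness criterion for $\{f + \C \tq \nr{[D_X,f]} \leq 1\}$ in $C(X)/\C$ by expanding $f = \sum_{w \in X^+} Q_w(f)$, bounding each $\nr{Q_w(f)}_\infty$ via Proposition~\ref{prop:control-Qw} together with the linear-recurrence estimates (Lemma~\ref{lem:R-LR}, Proposition~\ref{prop:LR-measure-clopen}), and exploiting sparsity of the special levels along each point; your head/tail splitting at level $N$ is a legitimate repackaging of the paper's diagonal-subsequence argument. However, your ``decisive input'' contains a genuine gap: the claim that there is a single $\tau > 1$ with $n_j \geq \tau n_{j-1}$ for all $j$, uniformly in $x$. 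Linear recurrence does not provide termwise geometric growth of consecutive special levels. What the return-word argument actually yields --- and all the paper proves in the discussion after Lemma~\ref{lem:LR-bound-Rw} --- is that the \emph{number} of special levels in each multiplicative window $(n,(1+1/K)n]$ is at most $K(K+1)^2$, uniformly in $n$ and $x$. This is strictly weaker: it permits up to $K(K+1)^2$ special levels clustered inside one window, in particular consecutive special levels with $n_j - n_{j-1} = 1$ (even levels test left-specialness and odd levels right-specialness of $\pi_n(x)$, so adjacent special levels are conditions on opposite ends of the word and nothing in linear recurrence excludes their co-occurrence at arbitrarily large scales; in any linearly recurrent subshift with more than one special factor per length, e.g.\ Thue--Morse, one should expect exactly this). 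At such places your bound $\frac{C}{n_j - n_{j-1}}\bigl(\frac{n_j}{n_{j-1}+1}\bigr)^{1/2}$ gives no decay whatsoever, the series $\sum_j (n_j n_{j-1})^{-1/2}$ estimate is unavailable, and the uniform tail bound --- the very step you identified as the main obstacle --- collapses. The paper's informal remark that the special levels ``tend to grow exponentially'' means growth in blocks, i.e.\ $n_{j+B} \geq (1+1/K)\, n_j$ with $B = K(K+1)^2$, not $n_j \geq \tau n_{j-1}$.

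The repair is the paper's choice of parameters: apply Proposition~\ref{prop:control-Qw} with $m = 0$ instead of $m = n_{j-1}$, which gives
\[
 \nr{Q_{\pi_n(x)}(f)}_\infty \leq C_0\, (n+1)^{-1} \mu(\pi_n(x))^{-1/2} \leq C\, n^{-1/2},
\]
a bound depending only on the current level, and then sum over the windows $I_k = [(1+1/K)^k, (1+1/K)^{k+1})$: each window contributes at most $K(K+1)^2$ terms, each of size at most $C\alpha^k$ with $\alpha = (1+1/K)^{-1/2} < 1$, so the tail is geometric in $k$, uniformly in $x$ and $f$. (Alternatively, your chained version can be salvaged by stepping back $B+1$ special levels rather than one, since then $n_j - m \geq c\, n_j$; note that Proposition~\ref{prop:control-Qw} requires $\pi_m(w) \in X_m^+$, which that choice still guarantees.) The remainder of your write-up --- the reduction via Theorem~\ref{thm:commutator-of-D}, the constants extracted from Proposition~\ref{prop:LR-measure-clopen}, and the total boundedness of the head inside the finite-dimensional space $C_N$ --- is sound.
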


The proof will occupy the remainder of the subsection, and use Proposition~\ref{prop:control-Qw} in a crucial way.

The general idea of the proof goes along these lines:
starting from a function $f \in C_\infty (X)$ with $\nr{ [f,D]} \leq 1$,
Proposition~\ref{prop:control-Qw} allows to control $\nr{Q_w(f)}_\infty$ where $w \in X_n^+$, in terms of~$n$.
Since any such function can be written up to a constant term as
$f=\sum_{w \in X^+} Q_w(f)$, this allows to control the norm of~$f$ in the algebra $C(X) / \mathbb C$.
Eventually, Rieffel's criterion~\cite{Rie99,Rie04} is applied.

First, this lemma is an immediate consequence of Proposition~\ref{prop:LR-measure-clopen}.

\begin{lemma}\label{lem:LR-bound-Rw}
 Let $X$ be a linearly recurrent subshift.
 There are $C_1, C_2 > 0$ such that for all $n$ and all $w \in X_n$, $C_1 \leq R(w) \leq C_2$.
\end{lemma}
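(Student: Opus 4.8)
The plan is to read both bounds off directly from the two-sided frequency estimate of Proposition~\ref{prop:LR-measure-clopen}, which asserts that for the linear recurrence constant $K$ one has $\tfrac{1}{K}\le n\,\mu(w)\le K$ for every $n\ge 1$ and every $w\in X_n$. Since $R(w)$ is defined as a supremum of ratios $\mu(w)/\mu(w')$ over the (finitely many) one-step extensions $w'$ of $w$, the whole lemma reduces to combining an upper bound for the numerator with a lower bound for the denominator supplied by this estimate.

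For the \emph{lower bound} I would invoke nothing more than the elementary observation recorded immediately after the definition of $R(w)$, namely that $R(w)\ge 1$ for every word $w$ (with strict inequality exactly when $w$ is special). Hence $C_1=1$ works with no further argument, and the non-special words, for which $R(w)=1$ outright, require no special treatment. This is precisely where the present statement extends the earlier Lemma~\ref{lem:R-LR}, which only controlled $R(w)$ on the special words $X_n^+$.

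For the \emph{upper bound}, fix $w\in X_n$ and any $w'$ with $\pi(w')=w$. The one point needing care is the index bookkeeping: since $\pi\colon X_{n+1}\to X_n$, the extension $w'$ lies in $X_{n+1}$, so Proposition~\ref{prop:LR-measure-clopen} must be applied to $w'$ with index $n+1$. Bounding the numerator above by $\mu(w)\le K/n$ and the denominator below by $\mu(w')\ge 1/\bigl(K(n+1)\bigr)$ gives
\[
 \frac{\mu(w)}{\mu(w')}\ \le\ K^{2}\,\frac{n+1}{n}\ \le\ 2K^{2},
\]
uniformly in $n\ge 1$ and in the choice of $w'$, the last step using $\tfrac{n+1}{n}\le 2$. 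Taking the supremum over $w'$ yields $R(w)\le 2K^{2}=:C_2$.

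The argument is essentially immediate, so I do not expect a genuine obstacle; the only delicate point is the index shift $w'\in X_{n+1}$, which is what produces the harmless factor $(n+1)/n$. The degenerate case $n=0$ (the empty word $\varepsilon$, with $\mu(\varepsilon)=1$) can be disposed of separately, since there $R(\varepsilon)=\sup\{1/\mu(w')\mid w'\in X_1\}\le K$ by the $n=1$ instance of the frequency estimate.
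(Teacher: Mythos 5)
Your proof is correct and is exactly the argument the paper intends: the paper states this lemma without proof as an ``immediate consequence'' of Proposition~\ref{prop:LR-measure-clopen}, and your derivation --- $C_1=1$ from the trivial bound $R(w)\ge 1$, and $R(w)\le 2K^2$ by applying the frequency estimate to $w\in X_n$ and its extensions $w'\in X_{n+1}$ --- is the natural way to make that remark explicit. Your attention to the index shift $(n+1)/n$ and the empty-word case is careful but harmless, since the paper only defines $R(w)$ for $w\in\cup_{n\ge 1}X_n$.
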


Let us consider now $w \in X_n^+$. Any such $w$ is the prefix of a word of the form $rw$ where $r$ is a return word to~$w$.
Therefore, all sufficiently long right-extensions of $w$ start by a word of the form $rw$, with $r$ some return-word to be picked in a finite set. Besides, the length of $rw$ is always at least $(1+1/K) n$.
Therefore, $w$ has at most $K(K+1)^2$ right-extensions of length $(1+1/K)n$.
As a conclusion, there are at most $K(K+1)^2$ indices $n < k \leq (1+1/K)n$ for which the right-extension of length $k$ of $w$ (say $w'$) is a special word.

A similar result holds for left-extensions.
The conclusion is then that for any $w \in X_n$, the number of indices $n < k \leq n(1+1/K)$ for which there is a $w' \in \pi_n^{-1}(\{w\}) \cap X_k^+$ is bounded above, and this bound is uniform in $n$ (it only depends on $K$).

Consider now a sequence $f_i \in C_\infty$, such that $\nr{[D,f_i]} \leq 1$ for all $i$.
The goal is to extract a converging subsequence from the image of the sequence $(f_i)_i$ in the quotient algebra $C(X)/\mathbb C$.
If we assume that all the $f_i$ belong to $C_0^\perp$ (which is possible up to adding a suitable scalar multiple of the identity), it is then sufficient to prove that the sequence $(f_i)_i$ has a convergent subsequence in $C(X)$.

Since $f_i \in C_0^\perp$, we can write
\[
 f_i = \sum_{w \in X^+} Q_w (f_i).
\]
For a fixed $w$ in $X_{n}^{+}$, $ n \geq 1$, we may apply Proposition~\ref{prop:control-Qw} with $m=0$
to conclude that
\begin{equation}\label{eq:Qw-LR}
 \begin{split}
  \nr{Q_{w}(f_{i})}_{\infty} & \leq  R(w)^{1/2} R(\varepsilon)^{1/2} (n+1)^{-1} \mu(w)^{-1/2} \\
                             & \leq C_0 (n+1)^{-1} \mu(w)^{-1/2},
 \end{split}
\end{equation}
where $C_0$ is a constant which does not depend on $w$.
In particular, this is bounded independent of $i$. Since the vector space $F_{w}$
is finite-dimensional, any bounded sequence must have a convergent subsequence.

Enumerate $X^{+} = \{ w_{1}, w_{2}, w_{3}, \ldots \}$.  
Let $(f^{1}_{n})_{n \geq 1}$ be a subsequence of $(f_{n})_{n \geq 1}$ such that $Q_{w_{1}}(f^{1}_{n})$ converges.
Inductively define a sequence $(f^{k}_{n})_{n \geq 1}$ as a subsequence
of $(f^{k-1}_{n})_{n \geq 1}$ such that $(Q_{w_{k}}(f^{k}_{n}))_n$ converges.

We now prove that the sequence $(f^n_n)_{n \geq 1}$ converges in $C(X)$.
It is enough to show that it is Cauchy.
The goal is to show that $\nr{f_n^n(x)-f_m^m(x)}_\infty$ goes to zero uniformly in $x$ when $n,m$ tend to infinity.

To prepare this, consider a function $f$ and evaluate
\(
 \sum_{w \in X^+} Q_w(f) (x)
\)
for $x \in X$. For a given $x$, infinitely many terms of the sum are zero.
It is possible to rewrite it: if $f \in C_0^\perp$ and $\nr{[D,f]} \leq 1$,
\[
 \abs{f(x)}  = \Bigl\vert \sum_{n \in \mathbb N} \mathbf{1}_{ \{\pi_n(x) \in X_n^+\}} (n) Q_{\pi_n(x)}(f)(x) \Bigr\vert.
\]
In particular,
\begin{equation}\label{conv-fx}
 \Bigl\vert f(x) - \sum_{n=1}^N \sum_{w \in X_n^+} Q_w(f) (x) \Bigr\vert \leq \sum_{n > N} \mathbf{1}_{\{\pi_n(x) \in X_n^+\}} (n) \nr{Q_{\pi_n(x)}(f)}_\infty.
\end{equation} 
By the argument above, the cardinality of the set
\[
 \bigl\{ n \in [(1+1/K)^k, (1+1/K)^{k+1})  \mid  \pi_n(x) \in X_n^+ \bigr\}
\]
is bounded above by $K(K+1)^2$, independently of $k$ and $x$. Call $I_k$ the interval $[(1+1/K)^k, (1+1/K)^{k+1})$. One has
\begin{multline}\label{conv-fx-bis}
 \Bigl\vert f(x) - \sum_{n=1}^N \sum_{w \in X_n^+} Q_w(f) (x) \Bigr\vert 
   \leq \sum_{k \geq k_0} K(K+1)^2 \max \big\{ \nr{Q_{\pi_n(x)}(f)}_\infty \ : \\  \ n \in I_k \text{ such that } \pi_n(x) \in X_n^+ \big\}.
\end{multline}

Now, it turns out that if $n \in I_k$, then $\mu(w)^{-1/2} \leq K^{1/2} (1+1/K)^{(k+1)/2}$, and $1/n \leq (1+1/K)^{-k}$, so using the bound of Equation~\eqref{eq:Qw-LR} with $w = \pi_n(x)$ together with Lemma~\ref{lem:R-LR}, we get
\[
 \nr{Q_{\pi_n(x)}(f)}_\infty \leq C \alpha^k, \quad \text{for } \alpha = \Big( 1+\frac{1}{K}\Big)^{-1/2} < 1.
\]
It is crucial to notice that both constants $C$ and $\alpha$ are completely independent of $f$ and $x$ (as long as $\nr{[D,f]} \leq 1$).
In conclusion, the convergence in~\eqref{conv-fx-bis} is geometric, and
\(
 \sum_{w \in X^+} Q_w(f)
\)
converges uniformly to $f$ on $X$. In particular, its norm converges.

Back to the sequence $(f^n_n)_{n \geq 1}$. Let $\eps > 0$. First, choose $M$ such that $\nr{\sum_{n \geq M}\sum_{w \in X_n^+} Q_w(f)}_\infty$ is smaller than $\eps/4$ (independent of $f$).
Next, number the elements of $\bigcup_{i < M} X_i^+ := \{ w_1, \ldots, w_L \}$. For each $1 \leq l \leq L$, pick a number $I_l$ such that for all $n,m \geq I_l$,
\[
 \nr{Q_{w_l}(f_n^l) - Q_{w_l} (f_m^l)}_\infty \leq \frac{\eps}{2L}.
\]
Pick $I = \max \{I_l \mid 1 \leq l \leq L\}$.
Then, for all $n,m \geq I$, we can compute
\begin{align*}
  & \bigl\Vert{f_n^n - f_m^m}\bigr\Vert_\infty = \Bigl\Vert{ \sum_{w \in X^+} Q_{w}(f_n^n - f_m^m) }\Bigr\Vert_\infty \\
  &   \quad  \leq \Bigl\Vert{ \sum_{l=1}^L Q_{w_l} (f_n^n - f_m^m) }\Bigr\Vert_\infty 
           + \Bigl\Vert{\sum_{\substack{n \geq M \\ w \in X_n^+}} Q_{w}(f_n^n)}\Bigr\Vert_\infty 
           + \Bigl\Vert{\sum_{\substack{n \geq M \\ w \in X_n^+}} Q_{w}(f_m^m)}\Bigr\Vert_\infty \\
  &   \quad  \leq \sum_{l=1}^L \Bigl\Vert{ Q_{w_l} (f_n^n) - Q_{w_l} (f_m^m) }\Bigr\Vert_\infty
                        + \Bigl\Vert{\sum_{\substack{n \geq M \\ w \in X_n^+}} Q_{w}(f_n^n)}\Bigr\Vert_\infty
                        + \Bigl\Vert{\sum_{\substack{n \geq M \\ w \in X_n^+}} Q_{w}(f_m^m)}\Bigr\Vert_\infty \\
  &   \quad  \leq L \frac{\eps}{2L} + \frac{\eps}{4} + \frac{\eps}{4} = \eps.
\end{align*}
Therefore, the sequence is Cauchy, so it converges in $C(X)$.

\subsection{Sturmian systems}

In this section, it is understood that the subshift in question is $X=X^\theta$.
The following result is a consequence of Theorem~\ref{thm:freq-sturm}.

\begin{lemma}\label{lem:sturm}
 Let $X^\theta$ be a Sturmian subshift with $0 < \theta < 1$ irrational.
 Let $\theta=[0;a_1, a_2, \ldots]$ be the continued fraction expansion of $\theta$.
 Then:
 \begin{enumerate}
  \item Assume $q_n + q_{n-1} - 1 \leq m < q_{n+1} + q_n - 1$. Then for all $w \in X_m^+$, $R(w) \leq a_{n+1}+1$.
  \item For any $n$, there exist $q_n + q_{n-1} \leq m < q_{n+1} + q_{n}$ and $w$ in $X_{m}$ such that ${\mu(\pi(w))}/{\mu(w)} \geq a_{n+1}$.
  \item Let $x \in X$. Then, for all $n$,
  \[
   \# \big\{ m \in \N \cap [q_n + q_{n-1} , q_{n+1} + q_n) \mid \pi_m (x) \in X_m^+ \big\} \leq  a_{n+1} + 2
  \]
  \item $q_n \geq 2^{(n-1)/2}$.
 \end{enumerate}
\end{lemma}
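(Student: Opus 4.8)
The plan is to deduce (1)--(3) from the three-distance theorem (Theorem~\ref{thm:freq-sturm}) together with the continued-fraction identities of Lemma~\ref{lem:cont-frac}, using throughout that a Sturmian subshift has a \emph{unique} special word at each length. Part~(4) uses none of this and I would prove it first: from $q_n = a_n q_{n-1} + q_{n-2}$ with $a_n \geq 1$ and $(q_n)$ non-decreasing one gets $q_n \geq q_{n-1} + q_{n-2} \geq 2 q_{n-2}$, and iterating this from the base values $q_0 = 1$, $q_1 = a_1 \geq 1$ gives $q_n \geq 2^{(n-1)/2}$ (the two parities being covered by the two base cases).

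For (1) and (2), given a length $m$ I would first locate the pair $(n,k)$ with $0 < k \leq a_{n+1}$ and $kq_n + q_{n-1} \leq m < (k+1)q_n + q_{n-1}$, so that by Theorem~\ref{thm:freq-sturm} the cylinder measures of length-$m$ words take the values
\[
 f_1 = \lambda_n - k\lambda_{n+1}, \qquad f_2 = \lambda_{n+1}, \qquad f_3 = \lambda_n - (k-1)\lambda_{n+1},
\]
which satisfy $f_3 = f_1 + f_2$. Since passing from length $m$ to $m+1$ splits exactly one word, and the three-distance structure forces it to be a word of the largest measure $f_3$, the unique $w \in X_m^+$ has $\mu(w) = f_3$ and its two children realise $f_1$ and $f_2$; hence $R(w) = f_3/\min(f_1,f_2)$. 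For $k < a_{n+1}$ one has $\min(f_1,f_2) = \lambda_{n+1}$, and since $\lambda_{n+1} = \lambda_n \theta_{n+1}$ and $\theta_{n+1}^{-1} = a_{n+1} + \theta_{n+2}$,
\[
 R(w) = \frac{\lambda_n - (k-1)\lambda_{n+1}}{\lambda_{n+1}} = \frac{1}{\theta_{n+1}} - (k-1) \leq \frac{1}{\theta_{n+1}} < a_{n+1} + 1,
\]
which is (1). For (2) I would take the very first split, $m = q_n + q_{n-1}$: at length $q_n + q_{n-1} - 1$ the measures are $\{\lambda_n, \lambda_{n+1}\}$ and the word of measure $\lambda_n$ splits into $\lambda_n - \lambda_{n+1}$ and $\lambda_{n+1}$, so taking $w$ to be the small child gives $\mu(\pi(w))/\mu(w) = \lambda_n/\lambda_{n+1} = \theta_{n+1}^{-1} \geq a_{n+1}$, with $m$ in the prescribed range.

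For (3) the key reduction is that $\pi_m(x) \in X_m^+$ precisely when the central cylinder $U(\pi_m(x))$ strictly shrinks at step $m$, that is, when $g(m) := \mu(\pi_m(x))$ satisfies $g(m) > g(m+1)$. As $g$ is non-increasing in $m$, the quantity to be bounded is the number of strict descents of $g$ on the interval, which is at most the number of distinct values $g$ assumes there, minus one. By Theorem~\ref{thm:freq-sturm}, on $[q_n + q_{n-1}, q_{n+1} + q_n)$ every value of $g$ lies in $\{\lambda_{n+1}\} \cup \{\lambda_n - j\lambda_{n+1} : 0 \leq j \leq a_{n+1}\}$; together with one boundary value this is a set of at most $a_{n+1} + 3$ reals, so there are at most $a_{n+1} + 2$ descents.

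The step I expect to be the main obstacle is the bookkeeping at the block and level boundaries. The clean computation above is valid in the interior range $k < a_{n+1}$; at $k = a_{n+1}$ the ordering of $f_1$ and $f_2$ flips, so that $\min(f_1,f_2) = f_1 = \lambda_{n+2}$ and the ratio becomes $1 + \theta_{n+2}^{-1}$, governed by $a_{n+2}$ rather than $a_{n+1}$. Correctly matching each $m$ near $q_{n+1}$ to its $(n,k)$, handling the lengths $q_n + q_{n-1} - 1$ and $q_{n+1} + q_n - 1$ at which the third frequency is absent, and reconciling the overlapping level-$n$ and level-$(n+1)$ descriptions, is where the precise constants in (1)--(3) are pinned down.
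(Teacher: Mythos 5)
Your parts (2), (3) and (4) are correct and essentially reproduce the paper's own argument: for (2) the same witness $m = q_n + q_{n-1}$ with the child of frequency $\lambda_{n+1}$ (note only that when $a_{n+1}=1$ this is the \emph{larger} child, since $\lambda_n - \lambda_{n+1} = \lambda_{n+2} < \lambda_{n+1}$; the ratio computation is unaffected); for (3) the same count, namely that on $[q_n+q_{n-1}, q_{n+1}+q_n)$ the function $m \mapsto \mu(\pi_m(x))$ is non-increasing, drops exactly at the special indices, and takes values in the set $\{\lambda_{n+1}\} \cup \{\lambda_n - j\lambda_{n+1} \tq 0 \leq j \leq a_{n+1}\}$ of cardinality $a_{n+1}+2$; and for (4) a self-contained induction via $q_n \geq 2q_{n-2}$ in place of the paper's citation of Khinchin.

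The obstacle you flag in part (1) is, however, not bookkeeping that can be ``pinned down'': it is a genuine gap, and in fact the case you isolate is one where the stated bound \emph{fails}. In the last block $k = a_{n+1}$ (i.e.\ $q_{n+1} \leq m+1 < q_{n+1}+q_n$, using $a_{n+1}q_n + q_{n-1} = q_{n+1}$), your own computation is right: additivity and irrationality force the special word to have measure $f_3 = \lambda_{n+1}+\lambda_{n+2}$ with children $\lambda_{n+1}$ and $\lambda_{n+2}$, so $R(w) = 1 + \theta_{n+2}^{-1}$, which exceeds $a_{n+1}+1$ as soon as $a_{n+2} \geq a_{n+1}$. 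Concretely, for the golden mean $\theta = [0;1,1,1,\ldots]$ one gets $R(w) = 1+\theta^{-1} = \theta^{-2} \approx 2.618 > a_{n+1}+1 = 2$ at every length: already at $m=2$ the special word $01$ has $\mu(01)=\lambda_2 \approx 0.382$ and children of measures $\lambda_3 \approx 0.236$ and $\lambda_4 \approx 0.146$. The paper's proof of point~1 elides exactly this case: it asserts that the biggest quotient has the form $(\lambda_n - k'\lambda_{n+1})/\lambda_{n+1}$, tacitly assuming the smallest sibling always has measure $\lambda_{n+1}$, which is false precisely on the last block, where the smallest sibling has measure $\lambda_{n+2} = \lambda_n - a_{n+1}\lambda_{n+1}$. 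So you have not missed an idea; you have located an error in the lemma as stated. The honest conclusion is the weakened bound $R(w) \leq \max(a_{n+1}, a_{n+2}) + 2$ (your $1+\theta_{n+2}^{-1} < a_{n+2}+2$ covering the last block, the interior computation covering $k < a_{n+1}$), and this weaker form is all that the downstream application needs: in the proof of Theorem~\ref{thm:sturm-bounded-ae} the hypothesis $a_j \leq Cj^s$ still yields a bound on $R(w)^{1/2}R(\pi_m(w))^{1/2}$ that is polynomial in the level $k$, so the main theorem survives with the indices shifted by one.
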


\begin{proof}
Point 1 is obtained by direct inspection of the formulas in Theorem~\ref{thm:freq-sturm}.
If $w \in X_m^+$ with $q_n + q_{n-1} \leq m < q_{n+1} + q_n - 1$, then the biggest quotient of the form $\mu (w) / \mu(w')$ for $\pi(w')=w$ is of the form
\[
 \frac{\lambda_n - k' \lambda_{n+1}}{\lambda_{n+1}} \leq \frac{\lambda_n}{\lambda_{n+1}} < a_{n+1}+1
\]
for some $0 \leq k' \leq a_{n+1}$.
If $m= q_{n} + q_{n-1} - 1 = (a_n + 1)q_{n-1}+q_{n-2}$, such a quotient is of the form
\(
 {\lambda_{n}}/{\lambda_{n+1}} \leq a_{n+1}+1
\).

 Similarly for point 2, observe that the frequencies associated with words of length $q_n+q_{n-1}-1$ are $\lambda_{n+1}$ and $\lambda_n$, while the frequencies associated with words of length $q_n + q_{n-1}$ are $\lambda_n$, $\lambda_{n+1}$ and $\lambda_n - \lambda_{n+1}$.
 Since $\# X_m^+ = 1$ for all $m$, let $w'$ be the unique word in $X^+_{q_n+q_{n-1}-1}$.
 By inspection, $\mu(w') = \lambda_n$ and it has two extensions of frequencies $\lambda_{n+1}$ and $\lambda_n - \lambda_{n+1}$. Let $w$ be its extension of frequency $\lambda_{n+1}$. Then $\mu(w')/ \mu(w) = \theta_{n+1}^{-1} \geq a_{n+1}$.

 Point 3 can also be deduced from Theorem~\ref{thm:freq-sturm}: $w \in X_m^+$ if and only if $\mu(w) \neq \mu(w')$ for $w' \in \pi^{-1}(\{w\})$.
 Now, $\mu(w)$ can only take $a_{n+1} + 2$ values if $w \in X_m$ and $q_n + q_{n-1} \leq m < q_{n+1} + q_n$.
 This proves the result.
 
 Point 4 is classic (see~\cite[Theorem~12]{Khi97}).
\end{proof}

Let us begin with the less positive result, simply because it is easier to prove.

\begin{thm}
 \label{thm:sturm-unbounded}
 There exists $0 < \theta <  1 $  irrational such that the  spectral triple 
$(C_{\infty}(X^{\theta}), L^{2}(X^{\theta}, \mu), D_{X^{\theta}})$
has infinite Connes metric. 
\end{thm}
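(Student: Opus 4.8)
The plan is to reduce everything to the sufficient condition already proved in Theorem~\ref{thm:sufficient-for-unbounded}: it is enough to produce an irrational $\theta\in(0,1)$ for which the set
\[
 \left\{ (n+1)^{-2}\,\frac{\mu(\pi(w))}{\mu(w)} \mid n \geq 1,\ \pi(w) \in X_n^+ \right\}
\]
is unbounded. For Sturmian systems the ratios $\mu(\pi(w))/\mu(w)$ are completely governed by the partial quotients of $\theta$ through Lemma~\ref{lem:sturm}, and since the partial quotients may be prescribed freely, I expect to choose them growing so fast (a Liouville-type construction) that these ratios overwhelm the $(n+1)^{-2}$ damping.

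Concretely, for each continued-fraction index $n$ I would invoke point~2 of Lemma~\ref{lem:sturm}, using the specific short word furnished by its proof: the unique special word $w'=\pi(w)$ of length $q_n+q_{n-1}-1$ admits an extension $w$ of length $m=q_n+q_{n-1}$ with $\mu(w)=\lambda_{n+1}$ and $\mu(\pi(w))=\lambda_n$, so that $\mu(\pi(w))/\mu(w)=\theta_{n+1}^{-1}\geq a_{n+1}$. The one point requiring care is the index translation: the symbol $n$ in Theorem~\ref{thm:sufficient-for-unbounded} denotes the \emph{length} of $\pi(w)$, which here equals $q_n+q_{n-1}-1$, so the corresponding element of the set is
\[
 (q_n+q_{n-1})^{-2}\,\frac{\mu(\pi(w))}{\mu(w)} \;\geq\; \frac{a_{n+1}}{(q_n+q_{n-1})^{2}} \;\geq\; \frac{a_{n+1}}{4\,q_n^{2}},
\]
using $q_{n-1}\leq q_n$. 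It is essential to take the \emph{minimal} admissible length $m=q_n+q_{n-1}$ rather than an arbitrary $m$ in the range allowed by point~2: a length close to $q_{n+1}+q_n$ would be of order $a_{n+1}q_n$, and its square in the denominator would cancel the gain from the large ratio.

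It then remains to build $\theta$. I would define the partial quotients inductively: having fixed $a_1,\dots,a_n$ (which determine $q_n$ via $q_k=a_kq_{k-1}+q_{k-2}$), choose any integer $a_{n+1}>4n\,q_n^{2}$. Every such sequence of positive integers defines an irrational $\theta=[0;a_1,a_2,\dots]\in(0,1)$, and by the displayed estimate the set above contains a value exceeding $n$ for every $n\geq 1$, hence is unbounded. Theorem~\ref{thm:sufficient-for-unbounded} then yields that $(C_{\infty}(X^{\theta}),L^{2}(X^{\theta},\mu),D_{X^{\theta}})$ has infinite Connes metric. The construction involves no delicate estimate—point~4 of Lemma~\ref{lem:sturm} is not even needed—so the only genuine obstacle is the bookkeeping between the length-indexing of Theorem~\ref{thm:sufficient-for-unbounded} and the continued-fraction indexing of Lemma~\ref{lem:sturm}, together with the observation that one must seize the shortest word realizing the large ratio.
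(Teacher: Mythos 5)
Your proposal is correct and takes essentially the same route as the paper: the paper likewise defines the partial quotients inductively (there $a_{n+1}=n(q_n+q_{n-1})^2$), invokes part~2 of Lemma~\ref{lem:sturm} for the word of length $m=q_n+q_{n-1}$, and concludes via Theorem~\ref{thm:sufficient-for-unbounded}. Your explicit insistence on taking the \emph{minimal} admissible length is well placed: the paper's displayed chain $(m+1)^{-2}a_{n+1}\geq(q_{n+1}+q_n)^{-2}a_{n+1}=n$ contains an index slip (with the paper's choice of $a_{n+1}$ that last equality fails, and for $m$ near $q_{n+1}+q_n$ the quantity would in fact tend to zero), whereas your bookkeeping, which yields $a_{n+1}/(q_n+q_{n-1})^2\geq a_{n+1}/(4q_n^2)>n$ for $m=q_n+q_{n-1}$ exactly as furnished by the lemma's proof, repairs it.
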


\begin{proof}
Choose $a_{1}= 1$. Define $a_{n}$ inductively by
$a_{n+1} = n (q_{n} + q_{n-1})^{2}$. Letting $m$ and $w$ be as in part 2 of Lemma~\ref{lem:sturm},
we have 
\[
 (m+1)^{-2} \frac{\mu(\pi(w))}{\mu(w)}    \geq 
 (m+1)^{-2} a_{n+1} \geq (q_{n+1} + q_{n})^{-2} a_{n+1} = n.
\]
The conclusion follows from Theorem~\ref{thm:sufficient-for-unbounded}. 
\end{proof}

On the more encouraging side, we prove the following.

\begin{thm}
 \label{thm:sturm-bounded-ae}
 Let $0 < \theta <  1 $ be irrational with $\theta =  [0;a_{1}, a_{2}, \ldots ]$. If there exist
 constants $C \geq 1$, $s \geq 1$ such that $a_{j} \leq C j^{s}$, then 
 the  spectral triple 
$(C_{\infty}(X^{\theta}), L^{2}(X^{\theta}, \mu), D_{X^{\theta}})$
has finite Connes metric and the topology induced on the state space
of $C(X^{\theta})$ is the weak-$*$ topology.
\end{thm}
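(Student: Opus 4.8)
The plan is to reproduce, in the Sturmian setting, the blueprint of the proof of Theorem~\ref{thm:LR}: by Rieffel's criterion \cite{Rie99,Rie04} it suffices to prove that the set $\{ f \in C_0^\perp \mid \nr{[D,f]} \leq 1 \}$ is bounded in sup-norm and precompact in $C(X^\theta)/\C$. As there, I would write each such $f$ as $f = \sum_{w \in X^+} Q_w(f)$ and control $\nr{Q_w(f)}_\infty$ through Proposition~\ref{prop:control-Qw} with $m=0$, which for $w \in X_\ell^+$ gives
\[
 \nr{Q_w(f)}_\infty \leq R(\varepsilon)^{1/2} R(w)^{1/2}\, \ell^{-1}\, \mu(w)^{-1/2}.
\]
The whole argument would then be organised according to the scales dictated by the continued fraction, namely the blocks of lengths $\ell \in [q_n + q_{n-1},\, q_{n+1} + q_n)$.

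On such a block three inputs are available. First, Lemma~\ref{lem:sturm}(1) bounds $R(w) \leq a_{n+1}+1$. Second, I would use Theorem~\ref{thm:freq-sturm} to observe that the unique special word of length $\ell$ carries the \emph{largest} admissible frequency $\lambda_n-(k-1)\lambda_{n+1}\geq \lambda_{n+1}$ (it is the word that splits when the length is increased); combined with Lemma~\ref{lem:cont-frac}(3), which gives $\lambda_{n+1}^{-1}=\theta_{n+1}^{-1}q_n+q_{n-1}\asymp q_{n+1}$, this yields $\mu(w)^{-1/2}\lesssim q_{n+1}^{1/2}$. Using $\ell^{-1}\leq q_n^{-1}$ and $q_{n+1}\leq(a_{n+1}+1)q_n$, the displayed bound collapses to $\nr{Q_w(f)}_\infty \lesssim (a_{n+1}+1)\,q_n^{-1/2}$. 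Third, Lemma~\ref{lem:sturm}(3) guarantees that for a fixed $x$ at most $a_{n+1}+2$ lengths $\ell$ in the block satisfy $\pi_\ell(x)\in X_\ell^+$, so the total contribution of the $n$-th block to $\abs{f(x)}$ is at most a constant times $(a_{n+1}+1)^2\,q_n^{-1/2}$, with all constants independent of $f$ and $x$.

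The hypothesis enters precisely here: with $a_{n+1}\leq C(n+1)^s$ and the classical lower bound $q_n\geq 2^{(n-1)/2}$ from Lemma~\ref{lem:sturm}(4), the $n$-th block contributes at most $C'(n+1)^{2s}2^{-(n-1)/4}$, where the geometric decay swamps the polynomial factor, so the series over $n$ converges. Summing would give a uniform sup-norm bound on $f$ (finiteness of the Connes metric) and, more usefully, a uniform bound on the tails $\sum_{\ell>L}\sum_{w\in X_\ell^+}Q_w(f)$ tending to $0$ as $L\to\infty$ independently of $f$ and $x$. A diagonal extraction over the countable set $X^+$ then makes every $Q_w(f)$ converge (each $F_w$ is finite-dimensional and the $Q_w(f)$ are uniformly bounded), and the uniform tail estimate upgrades this to uniform convergence in $C(X^\theta)$, yielding precompactness; Rieffel's criterion closes the argument. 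The hard part will be the second step: one must correctly recognise that the special word realises the largest frequency, so that $\mu(w)^{-1/2}\lesssim q_{n+1}^{1/2}$ rather than something much larger, and then set up the scale decomposition so that the geometric factor $q_n^{-1/2}$ genuinely dominates the polynomial growth of $a_{n+1}$ — all while keeping every constant uniform in $f$ and $x$.
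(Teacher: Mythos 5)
Your proof is correct, and its overall architecture --- Rieffel's criterion, the decomposition $f=\sum_{w\in X^+}Q_w(f)$ for $f\in C_0^\perp$ with $\Vert [D,f]\Vert\leq 1$, the continued-fraction blocks, the per-block counting bound of Lemma~\ref{lem:sturm}(3), and the diagonal extraction with a uniform tail estimate --- coincides with the paper's. Where you genuinely diverge is in how Proposition~\ref{prop:control-Qw} is instantiated. You take $m=0$, exactly as in the linearly recurrent case, so the geometric decay must come from $\ell^{-1}\mu(w)^{-1/2}$ itself; this forces your frequency lower bound $\mu(w)\geq\lambda_{n+1}$ for the special word (the ``largest frequency'' observation), after which $\ell^{-1}\mu(w)^{-1/2}\lesssim(a_{n+1}+1)^{1/2}q_n^{-1/2}$ and Lemma~\ref{lem:sturm}(4) give per-block contributions $O\bigl((n+1)^{2s}2^{-(n-1)/4}\bigr)$, which is summable. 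The paper instead picks an intermediate special scale $m\in[q_{k-2}+q_{k-3}-1,\,q_{k-1}+q_{k-2}-1)$ with $\pi_m(w)\in X_m^+$: there the decay comes from the eigenvalue gap, $(n-m)^{-1}\leq(a_kq_{k-1})^{-1}\leq 2^{1-k/2}$, while the measure ratio $\mu(\pi_{m+1}(w))^{1/2}\mu(w)^{-1/2}\leq(\theta_{k-1}\theta_k\theta_{k+1})^{-1/2}$ and the two $R$-factors contribute only polynomial growth, yielding $O(k^{5s/2}\alpha^k)$ per special word. Two remarks on your ``hard part''. First, the paper needs it too: its assertion that $\mu(w)^{-1/2}\leq\lambda_{k+1}^{-1/2}$ for $w\in X_n^+$ is precisely the statement that the special word does not carry the smallest frequency $\lambda_{k+2}$ occurring at those lengths (the word splitting at length $\ell+1$ has measure equal to a sum of two frequencies at length $\ell+1$, hence is the largest value in the triple of Theorem~\ref{thm:freq-sturm}); you are making explicit what the paper dismisses as resulting ``directly'' from that theorem. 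Second, the worry is actually dispensable: even the crude bound $\mu(w)\geq\lambda_{n+2}$, valid for \emph{every} word of those lengths, only costs an extra factor $(a_{n+2}+1)^{1/2}=O(n^{s/2})$, which the geometric factor $q_n^{-1/2}$ absorbs under the hypothesis $a_j\leq Cj^s$. As for what each approach buys: yours is simpler and runs in exact parallel with the proof of Theorem~\ref{thm:LR}, and it sidesteps the need to locate a special length $m$ for the given $w$ in an earlier block (which the paper's estimate presupposes); the paper's intermediate-$m$ version keeps all measure comparisons between nearby scales and displays the general mechanism --- gaps between consecutive special scales generate the decay --- that the authors highlight at the start of Section~\ref{sec:connes}.
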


This result has an immediate consequence.

\begin{cor}
 For almost all $\theta \in (0,1)$ (for the Lebesgue measure), the spectral triple $(C_{\infty}(X^{\theta}), L^{2}(X^{\theta}, \mu), D_{X^{\theta}})$
 has finite Connes metric and the topology induced on the state space of $C(X^{\theta})$ is the weak-$*$ topology.
\end{cor}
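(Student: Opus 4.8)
The plan is to obtain this corollary as a direct consequence of Theorem~\ref{thm:sturm-bounded-ae} by verifying its polynomial-growth hypothesis on a set of full Lebesgue measure. Concretely, I will show that for Lebesgue-almost every $\theta \in (0,1)$ the partial quotients satisfy $a_n(\theta) \leq C_\theta\, n^2$ for all $n \geq 1$, so that Theorem~\ref{thm:sturm-bounded-ae} applies with the exponent $s = 2$ and the constant $C = C_\theta$, yielding finite Connes metric inducing the weak-$*$ topology.

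The only genuine input is a classical estimate from the metric theory of continued fractions. Writing $G$ for the Gauss map, so that $a_n(\theta) = a_1(G^{n-1}(\theta))$, and recalling that the Gauss measure $d\mu_G = (\log 2)^{-1}(1+x)^{-1}\,dx$ is $G$-invariant and equivalent to Lebesgue measure with Radon--Nikodym derivative bounded above and below on $(0,1)$, I would derive a universal constant (one may take it to be $2$) such that
\[
 \operatorname{Leb}\{\theta \in (0,1) \mid a_n(\theta) \geq N\} \leq \frac{2}{N}, \qquad n, N \geq 1.
\]
For $n = 1$ this is exact, since $\{a_1 \geq N\}$ is an interval of length $1/N$; for general $n$ it follows by invariance of $\mu_G$ together with the two-sided comparison between $\mu_G$ and Lebesgue measure. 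This is the Borel--Bernstein estimate and is classical (see \cite{Khi97}).

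Next I would apply this bound with $N = n^2$. Setting $E_n = \{\theta \mid a_n(\theta) \geq n^2\}$, it gives $\sum_n \operatorname{Leb}(E_n) \leq \sum_n 2 n^{-2} < \infty$, so the easy half of the Borel--Cantelli lemma (which needs only summability, not independence) shows that almost every $\theta$ lies in only finitely many $E_n$. Hence for almost every $\theta$ there is an $N_\theta$ with $a_n(\theta) < n^2$ for all $n \geq N_\theta$. Putting $C_\theta = \max\{1, a_1(\theta), \ldots, a_{N_\theta - 1}(\theta)\}$, one then gets $a_n(\theta) \leq C_\theta\, n^2$ for every $n \geq 1$: for $n \geq N_\theta$ because $a_n < n^2 \leq C_\theta n^2$, and for $n < N_\theta$ because $a_n \leq C_\theta \leq C_\theta n^2$.

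For each such $\theta$ the hypothesis of Theorem~\ref{thm:sturm-bounded-ae} holds with $C_\theta \geq 1$ and $s = 2$, so the conclusion follows, and since the exceptional set is Lebesgue-null the corollary is proved. I do not expect any real obstacle: the single piece of substance is the metric estimate on partial quotients, and the remaining work is the routine bookkeeping of upgrading an eventual bound $a_n < n^2$ to a bound $a_n \leq C_\theta n^2$ valid for all $n$ and invoking Borel--Cantelli.
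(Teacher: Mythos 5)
Your proposal is correct, and its skeleton is the same as the paper's: reduce the corollary to verifying the hypothesis $a_n \leq C n^s$ of Theorem~\ref{thm:sturm-bounded-ae} for Lebesgue-almost every $\theta$. The difference is in how that metric fact is established. The paper quotes it as a black box, namely Khinchin's Theorem~30 of \cite{Khi97} (the estimate $a_n = O(\phi(n))$ holds almost everywhere if and only if $\sum_n \phi(n)^{-1}$ converges), applied with $\phi(n) = n^s$ for $s > 1$. You instead reprove the convergence half of that theorem from first principles: the tail bound $\operatorname{Leb}\{\theta \mid a_n(\theta) \geq N\} \leq 2/N$, obtained from $a_n(\theta) = a_1(G^{n-1}\theta)$, invariance of the Gauss measure, and the two-sided density comparison (your constant is exact: $\{a_1 \geq N\} = (0,1/N]$ and $(2\log 2)(\log 2)^{-1} = 2$), followed by the easy half of Borel--Cantelli with $N = n^2$ and the routine upgrade from an eventual bound $a_n < n^2$ to $a_n \leq C_\theta n^2$ for all $n$, with $C_\theta \geq 1$ by construction. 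The $\theta$-dependence of the constant is harmless, since Theorem~\ref{thm:sturm-bounded-ae} is invoked for each fixed $\theta$ separately --- exactly as in the paper, where the constant in $a_n \leq C n^s$ also depends on $\theta$. What your route buys is self-containedness (Borel--Bernstein plus Borel--Cantelli in place of a citation); what it gives up is nothing of substance, since fixing $s = 2$ rather than allowing any $s > 1$ is immaterial when the theorem only requires some $s \geq 1$.
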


\begin{proof}
 By a result of Khinchin~\cite[Theorem 30]{Khi97},
 if $\phi$ is a positive function defined on the natural numbers, then the estimate $a_n = O(\phi(n))$ holds for almost all $\theta \in (0,1)$ if and only if $\sum_{n \geq 1}(\phi(n))^{-1}$ converges.
 In this case, we consider the function $\phi(n) = n^s$ for $s > 1$.
 It follows that for almost all $\theta \in (0,1)$, the numbers $a_n$ satisfy $a_n \leq C n^s$.
 In particular, almost all $\theta \in (0,1)$ satisfy the hypotheses of Theorem~\ref{thm:sturm-bounded-ae}.
\end{proof}

 We now turn to the proof of Theorem~\ref{thm:sturm-bounded-ae}.
 It is actually quite similar to that of Theorem \ref{thm:LR} for linearly recurrent subshifts.

As for linearly recurrent subshift, if $f \in C_0^\perp$, we estimate
\begin{equation}\label{eq:unif-conv}
 \Bigl\vert{f(x) - \sum_{n=1}^N \sum_{w \in X_n^+} Q_w(f) (x)}\Bigr\vert \leq \sum_{n > N} \mathbf{1}_{\{\pi_n(x) \in X_n^+\}} (n) \nr{Q_{\pi_n(x)}(f)}_\infty.
\end{equation}
Now, we partition the positive integers into intervals $[q_k + q_{k-1} - 1, q_{k+1} + q_{k} - 1)$.
For $w \in X_n^+$ with $n$ in this interval, we can use Proposition~\ref{prop:control-Qw} together with points 1 and 2 of Lemma~\ref{lem:sturm} to get the following estimate.
This estimate holds for any $m$ in $[q_{k-2} + q_{k-3}-1, q_{k-1} + q_{k-2} - 1)$ such that $\pi_m(w) \in X_m^+$.
\[
 \begin{split}
 \nr{Q_w (f)}_\infty & \leq R(w)^{1/2} R(\pi_m(w))^{1/2} (n-m)^{-1} \mu(w)^{-1/2} \mu(\pi_{m+1}(w))^{1/2}   \\
                     & \leq (a_{k+1} a_k)^{1/2} (n-m)^{-1} \mu(w)^{-1/2} \mu(\pi_{m+1}(w))^{1/2}.
 \end{split}
\]
Now, given the intervals chosen,
\[
 (n-m)^{-1} \leq (q_k - q_{k-2})^{-1} = (a_k q_{k-1})^{-1} \leq 2^{-(k/2-1)} \leq \alpha^k
\]
for some $\alpha < 1$.
Furthermore, it results directly from Theorem~\ref{thm:freq-sturm} that $\mu(w)^{-1/2} \leq \lambda_{k+1}^{-1/2}$, and $\mu(\pi_{m+1}(w))^{1/2} \leq \lambda_{k-2}^{1/2}$.
Therefore, we have
\[
  \nr{Q_w (f)}_\infty  \leq (a_{k+1} a_k)^{1/2} \alpha^k (\theta_{k-1} \theta_k \theta_{k+1})^{-1/2}.
\]
It is straightforward from the definition that $\theta_k^{-1} \leq a_k + 1$. So using the assumption that $a_k = O(k^s)$, we can write
\[
 \nr{Q_w (f)}_\infty = O\big( k^{5s/2} \alpha^k \big).
\]
In other words, this is bounded above by a geometric series in $k$.
The rest is similar to the proof for linearly recurrent subshifts: using point 3 of Lemma~\ref{lem:sturm}, we prove that the expressions in Equation~\eqref{eq:unif-conv} are dominated by
\[
 \sum_{k \geq k_0} k^s (k^{5s/2} \alpha^k),
\]
for some $k_0 = k_0(N)$. It is essentially the remainder of a geometric series, which does not depend on $x$. Therefore the convergence of $\sum_w Q_w(f)$ to $f$ is uniform.
The rest of the proof is identical as before.

\bibliographystyle{abbrv}
\bibliography{biblio}

\end{document}